\documentclass[11pt,a4paper]{article}

\usepackage{studynote}
\newtheorem{thm}{Theorem}[section]
\newtheorem{dfn}{Definition}[section]
\newtheorem{prp}{Proposition}[section]

\newtheorem*{Ass}{Assumption}
\newtheorem{lemma}{Lemma}[section]

\usepackage{indentfirst}

\title{\bf Leader-Follower Linear-Quadratic Stochastic Graphon Games\thanks{This work is financially supported by the National Key R\&D Program of China (2022YFA1006104), National Natural Science Foundations of China (12471419, 12271304), and Shandong Provincial Natural Science Foundation (ZR2024ZD35).}}

\author{\normalsize Weijia Chen\thanks{\it School of Mathematics, Shandong University, Jinan 250100, P.R. China, E-mail: 202520301@mail.sdu.edu.cn},\quad Jingtao Shi\thanks{\it Corresponding author. School of Mathematics, Shandong University, Jinan 250100, P.R. China, E-mail: shijingtao@sdu.edu.cn}}

\date{\today}

\begin{document}

	\maketitle

	\begin{abstract}
	This paper investigates leader-follower linear-quadratic stochastic graphon games, which consist of a single leader and a continuum of followers. The state equations of the followers interact through graphon coupling terms, with their diffusion coefficients depending on the state, the graphon aggregation term, and the control variables. The diffusion term of the leader's state equation depends on its state and control variables. Within this framework, a hierarchical decision-making structure is established: for any strategy adopted by the leader, the followers compete to attain a Nash equilibrium, while the leader optimizes its own cost functional by anticipating the followers' equilibrium response. This work develops a rigorous mathematical model for the game, proves the existence and uniqueness of solutions to the system's state equations under admissible control sets, and constructs a Stackelberg-Nash equilibrium for the continuum follower game. By employing the continuity method, we establish the existence, uniqueness, and stability of solutions to the associated forward-backward stochastic differential equation with a graphon aggregation term.
\end{abstract}

\vspace{2mm}
	
\noindent{\bf Keywords:}\quad Leader-follower game, stochastic graphon game, Stackelberg-Nash equilibrium, graphon-aggregated forward-backward stochastic differential equation
	
\vspace{2mm}
	
\noindent{\bf Mathematics Subject Classification:}\quad 91A23, 91A13, 91A43, 93E20

%	\tableofcontents
	
	\section{Introduction}
	
	{\it Mean-field games} (MFGs), first introduced by Lasry and Lions \cite{Lasry-Lions-07} and Huang et al. \cite{Huang-Caines-Malhame-07}, provide an approximation method for finite-player games when the number of players is very large. The mean-field approximation holds rigorously when the population of players is sufficiently large and satisfies the conditions of homogeneity and anonymity (homogeneity means that each player has identical system dynamics and cost functional coefficients, while anonymity means that a player does not know which specific other player an influence originates from). This conclusion is referred to as the propagation of chaos result. For the theory of mean-field games, we refer to Bensoussan et al. \cite{Bensoussan-Frehse-Yam-13}, Carmona and Delarue \cite{Carmona-Delarue-18-1,Carmona-Delarue-18-2} and a recent review paper by Laurière \cite{Lauriere-25}.

MFGs provide an approximation for large-population games under the assumptions of homogeneity and anonymity. When players interact through a graph structure, these assumptions are broken. Guéant \cite{Gueant-15} studied MFGs with finite graph interactions, and Delarue \cite{Delarue-17} investigated MFGs with finite random graph interaction structures. Graphon serves as a tool for studying the limit of dense graphs (for a detailed theory, refer to Lovász \cite{Lovasz-12}). Gao and Caines \cite{Gao-Caines-17,Gao-Caines-18,Gao-Caines-19-1,Gao-Caines-19-2,Gao-Caines-20} introduced the graphon structure into control systems. The concept of graphon games/graphon MFGs was proposed for static models by Parise and Ozdaglar \cite{Parise-Ozdaglar-19,Parise-Ozdaglar-23}, and for dynamic models by Caines and Huang \cite{Caines-Huang-18,Caines-Huang-21}. Carmona et al. \cite{Carmona-Cooney-Graves-Lauriere-22} and Aurell et al. \cite{Aurell-Carmona-Lauriere-22} established the connection between graphon games and finite-network games. Bayraktar et al. \cite{Bayraktar-Chakraborty-Wu-23,Bayraktar-Kim-24,Bayraktar-Wu-Zhang-23} provided the convergence from finite graph interaction systems to graphon systems. Coppini et al. \cite{Coppini-DeCrescenzo-Pham-25} studied corresponding results for nonlinear graphon systems. Graphon games find applications in finance \cite{Tangpi-Zhou-24}, epidemic management \cite{Aurell-Carmona-Dayanikli-Lauriere-22}, rumor propagation \cite{Liu-Dayanikli-25}, among other fields.

Stackelberg games (leader-follower games) were initially proposed by von Stackelberg \cite{Stackelberg-34}, describing a hierarchical decision-making problem in economic markets. The players are divided into a leader and followers. The leader first specifies a set of possible controls, and the followers make a best response to each control of the leader. The leader then optimizes its own objective considering the best responses of the followers. Yong \cite{Yong-02} studied stochastic {\it linear-quadratic} (LQ) Stackelberg games. Moon and Ba\c{s}ar \cite{Moon-Basar-18} and Wang \cite{Wang-24} investigated Stackelberg games with one leader and a large number of followers, where under any control of the leader, the followers compete to form a Nash equilibrium. This problem is termed mean-field Stackelberg games (see also Nourian et al. \cite{Nourian-Caines-Malhami-Huang-12}, Wang and Zhang \cite{Wang-Zhang-14}, Bensoussan et al. \cite{Bensoussan-Chau-Yam-15}, Yang and Huang \cite{Yang-Huang-21}, Mukaidani et al. \cite{Mukaidani-Irie-Xu-Zhuang-22}, Dayan{\i}kl{\i} and Lauri\`{e}re \cite{Dayanikli-Lauriere-23}, Si and Shi \cite{Si-Shi-25}) and the references therein.

We first present a brief overview of the research problem addressed in this paper, with a detailed elaboration provided in the subsequent section. We consider an LQ Stackelberg stochastic graphon game involving one leader and a continuum of followers, satisfying the following state equations
\begin{equation}\label{state Intro}
	\begin{cases}
		\mathrm{d}X_t^{f,u}=\big(A_t^{f}X_t^{f,u}+B_t^{f}\alpha_t^{f,u}+C_t^{f}GX_t^{f,u}+b_t^{f}\big)\mathrm{d}t\\
		\qquad\qquad +\big(D_t^{f}X_t^{f,u}+E_t^{f}\alpha_t^{f,u}+F_t^{f}GX_t^{f,u}+\sigma_t^{f}\big)\mathrm{d}W_t^{f,u},\quad u\in I,\\
		\mathrm{d}X_t^l=\big(A_t^lX_t^l+B_t^l\alpha_t^l+C_t^lM_t^f+b_t^l\big)\mathrm{d}t\\
		\qquad\qquad +\big(D_t^lX_t^l+E_t^{l}\alpha_t^l+F_t^lM_t^f+\sigma_t^l\big)\mathrm{d}W_t^l,\\
		X_0^{f,u}=x_0^{f,u},\quad u\in I,\quad X_0^l=x_0^l,
	\end{cases}
\end{equation}
where
\begin{equation}\label{graphon intro}
	GX_t^{f,u}:=\int_I G(u,v)X_t^{f,v}\lambda(\mathrm{d}v),\quad M_t^f:=\int_I X_t^{f,u}\lambda(\mathrm{d}u).
\end{equation}
Here $GX^{f}$ is called the {\it graphon aggregation} of the state variable $X^{f}$. The admissible control sets of the followers and the leader are given by
\begin{equation}\label{admissible controls intro}
	\mathcal{U}^{f}:=L^2_\boxtimes(0,T;\mathbb{R}^{m_1}),\quad \mathcal{U}^l:=L_{\mathcal{F}^l}^2(0,T;\mathbb{R}^{m_2}), 
\end{equation}
respectively. The cost functional of a follower $u\in I$ is defined as
\begin{equation}\label{cost of followers intro}
	J^{f,u}(\alpha^{f,u};\alpha^{f,-u},\alpha^l)=\frac{1}{2}\mathbb{E}\biggl[ \int_0^T\Big\{U^{f,u\top}_t Q_t^{f}U_t^{f,u}+\alpha_t^{f,u\top}R_t^{f}\alpha_t^{f,u}\Big\}\mathrm{d}t+U_T^{f,u\top}G^{f}U_T^{f,u}\biggr],
\end{equation}
where
\begin{equation}\label{U f intro}
	U_t^{f,u}:=\begin{pmatrix}
		X_t^{f,u} \\ GX_t^{f,u} \\ X_t^l
	\end{pmatrix},\quad Q_t^{f}\equiv\begin{pmatrix}
		Q_t^{f,ij}
	\end{pmatrix}_{3\times 3},\quad G^{f}\equiv\begin{pmatrix}
		G^{f,ij}
	\end{pmatrix}_{3\times 3}.
\end{equation}
The cost functional of the leader is
\begin{equation}\label{cost of leader intro}
	J^l(\alpha^l;\alpha^f)=\frac{1}{2}\mathbb{E}\biggl[ \int_0^T\Big\{U_t^{l\top} Q_t^lU_t^l+\alpha_t^{l\top}R_t^l\alpha_t^l\Big\}\mathrm{d}t+U_T^{l\top}G^lU_T^l \biggr],
\end{equation}
where
\begin{equation}\label{U l intro}
	U_t^l:=\begin{pmatrix}
		X_t^l \\ M_t^f
	\end{pmatrix},\quad Q_t^l\equiv\begin{pmatrix}
		Q_t^{l,ij}
	\end{pmatrix}_{2\times 2},\quad G^l\equiv\begin{pmatrix}
		G^{l,ij}
	\end{pmatrix}_{2\times 2},
\end{equation}
The objective of both the leader and the followers is to optimize their respective cost functionals. The decision-making process has a hierarchical structure: the leader first specifies an admissible control $\alpha^l\in\mathcal{U}^l$, then the followers engage in a stochastic graphon game, competing to form a Nash equilibrium $\hat{\alpha}^f(\alpha^l)$. The leader, anticipating the Nash equilibrium that the followers will form, selects $\hat{\hat{\alpha}}^l$ to minimize the cost functional $J^l(\alpha^l,\hat{\alpha}^f(\alpha^l))$. The rigorous mathematical framework and the existence and uniqueness of solutions to the state equations will be provided in Section 3. Section 4 will present the Stackelberg-Nash equilibrium for the problem.

In Section 5, we employ the continuation method to study {\it forward-backward stochastic differential equations} (FBSDEs) with graphon aggregation terms (abbreviated as {\it graphon-aggregated FBSDEs}). In the classical theory of FBSDEs, the continuation method was primarily introduced by Hu and Peng \cite{Hu-Peng-95} and further developed by Yong \cite{Yong-97} and Peng and Wu \cite{Peng-Wu-99}. The core idea is to construct a family of FBSDEs connected to a solvable FBSDE. We introduce corresponding monotonicity conditions and establish the existence and uniqueness of solutions for the graphon-aggregated FBSDEs, along with estimates of the solutions and stability results concerning the interaction graphon. Section 6 provides some concluding remarks.

The contribution of this paper can be summarized as follows.

(1) First, we investigate stochastic graphon games with a leader-follower structure, which is an area that, to the best of our knowledge, has not been systematically explored. A rigorous mathematical framework is established for this problem, which is notable for its generality-specifically, the diffusion terms in the state equations for both the leader and the followers are formulated in a general form.

(2) Second, by employing the maximum principle, we derive a representation of the Stackelberg-Nash equilibrium for the problem.​ Furthermore, we establish sufficient conditions that guarantee the existence and uniqueness of a solution to the graphon-aggregated FBSDEs, which characterizes the equilibrium state of the followers and their associated adjoint processes. Sufficient conditions for the solvability of the problem are also provided.

(3) Third, the continuation method is applied to analyze a general linear graphon-aggregated FBSDEs.​ For this equation, we obtain sufficient conditions ensuring the existence and uniqueness of a solution, derive its $L^2$ estimates, and prove its continuous dependence on the coupling graphon.
	
	\section{Preliminaries}
	
	\subsection{Graphon theory}

For the theory of graphons, we refer to the monograph \cite{Lovasz-12} by Lovász.

Let $I=[0,1]$ and consider $(I,\mathcal{I},\lambda)$ as a probability space. A graphon is a bounded, symmetric, measurable function on $I\times I$ taking values in $[0,1]$. The set of all such graphons is denoted by $\mathcal{W}_0$.

A graphon $W\in\mathcal{W}_0$ is called a \textit{step function} if there exists a finite partition $S_1\cup\cdots\cup S_n$ of $I$ such that $W$ is constant on each block $S_i\times S_j$. The sets $S_i$ are referred to as the \textit{steps} of $W$.

\begin{dfn}\label{def norm distance of graphons}
	For any $G,G_1,G_2\in\mathcal{W}_0$, the following norm and distance are defined:
	\begin{equation}\label{norm distance}
		\|G\|_\infty=\sup_{u,v\in I}G(u,v),\quad d(G_1,G_2)=\|G_1-G_2\|_\infty.
	\end{equation}
\end{dfn}

From the definition, it follows that $0\le \|G\|_\infty\le 1$.

Graphons can be interpreted as weighted operators.

\begin{dfn}\label{def weighted operator of graphons}
	Denote by $L^2_{\mathcal{I}}(\mathbb{R}^n)$ the space of square-integrable $\mathbb{R}^n$-valued functions on $I$. For $G\in\mathcal{W}_0$, define the corresponding bounded linear operator, also denoted by $G$, from $L^2_{\mathcal{I}}(\mathbb{R}^n)$ to $L^2_{\mathcal{I}}(\mathbb{R}^n)$ as
	\begin{equation}\label{weighted operator}
		GX^u=\int_I G(u,v)X^v\lambda(\mathrm{d}v),\quad X\in L^2_{\mathcal{I}}(\mathbb{R}^n).
	\end{equation}
\end{dfn}

The bounded linear operator generated by a graphon (referred to as the graphon operator) possesses the following properties.

\begin{prp}\label{properties of graphon operators}
	Let $G\in\mathcal{W}_0$ be regarded as the graphon operator defined above. Then
	
	(1) For any $u\in I$ and $X\in L^2_{\mathcal{I}}(\mathbb{R}^n)$, we have
	\begin{equation}\label{properties of graphon operators-1}
		GX^u=\int_I G(u,v)X^v\lambda(\mathrm{d}v)\le \|G\|_\infty\cdot\int_I |X^v|\lambda(\mathrm{d}v),
	\end{equation}
	and
	\begin{equation}\label{properties of graphon operators-2}
		\int_I |GX^u|^2\lambda(\mathrm{d}u)\le \|G\|_\infty^2\cdot\int_I |X^u|^2\lambda(\mathrm{d}u)\le \int_I |X^u|^2\lambda(\mathrm{d}u).
	\end{equation}
	
	(2) $G$ is a compact self-adjoint operator. There exists a sequence of eigenvectors $\{\phi_i\}_{i=1}^{\infty}$ of $G$ that forms an orthonormal basis for $\overline{\mathrm{ran}\;G}$. Denoting the corresponding eigenvalues by $\{\lambda_i\}_{i=1}^\infty$, the operator admits the spectral decomposition
	\begin{equation}\label{spectral decomposition-1}
		GX=\sum_{k=1}^\infty \lambda_k\langle X,\phi_k \rangle_I\phi_k,\quad \forall X\in L^2_{\mathcal{I}}(\mathbb{R}^n),
	\end{equation}
	where
	\begin{equation}\label{spectral decomposition-2}
		\langle X,\phi_k \rangle_I=\int_I X^{u\top}\phi_k^u\lambda(\mathrm{d}u).
	\end{equation}
\end{prp}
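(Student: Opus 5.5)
For part (1), I will use only the pointwise bound $0\le G(u,v)\le\|G\|_\infty$. Bringing the absolute value inside the integral gives
\[
|GX^u|\le\int_I G(u,v)|X^v|\lambda(\mathrm{d}v)\le\|G\|_\infty\int_I|X^v|\lambda(\mathrm{d}v).
\]
This is the first inequality; I read it componentwise, or with $|\cdot|$ on the left.

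For the $L^2$ bound, I will apply Cauchy--Schwarz on the probability space $(I,\mathcal{I},\lambda)$:
\[
\Big(\int_I|X^v|\lambda(\mathrm{d}v)\Big)^2\le\int_I|X^v|^2\lambda(\mathrm{d}v).
\]
Squaring the pointwise bound and integrating over $u$ then gives the middle inequality, since $\lambda(I)=1$. The last inequality follows from $\|G\|_\infty\le1$. This also shows that $G$ maps $L^2_{\mathcal{I}}(\mathbb{R}^n)$ into itself with operator norm at most $\|G\|_\infty$, so $G$ is bounded. Measurability of $u\mapsto GX^u$ comes from Fubini, because $G(u,v)X^v$ is jointly measurable and integrable on $I\times I$.

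For part (2), self-adjointness follows from Fubini and the symmetry $G(u,v)=G(v,u)$:
\[
\langle GX,Y\rangle_I=\iint G(u,v)X^{v\top}Y^u\,\lambda(\mathrm{d}v)\lambda(\mathrm{d}u)=\langle X,GY\rangle_I .
\]
Compactness is the step that needs a real argument. Since $G\in L^2(I\times I)$ (it is bounded on a finite measure space), the operator is Hilbert--Schmidt. On $\mathbb{R}^n$-valued functions it acts diagonally, one copy for each coordinate, so it is still Hilbert--Schmidt, and Hilbert--Schmidt operators are compact. If I want to avoid quoting that fact, I will approximate $G$ in $L^2(I\times I)$ by step graphons $G_k$. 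Each $G_k$ induces a finite-rank operator, and the Cauchy--Schwarz argument from part (1), with $\|\cdot\|_\infty$ replaced by the $L^2(I\times I)$ norm, gives $\|G-G_k\|_{op}\le\|G-G_k\|_{L^2(I\times I)}\to0$. A norm limit of finite-rank operators is compact.

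Once $G$ is known to be compact and self-adjoint, I will invoke the spectral theorem for such operators on a Hilbert space. It gives an orthonormal system of eigenvectors $\{\phi_k\}$ with real eigenvalues $\lambda_k\to0$ that spans $\overline{\mathrm{ran}\,G}$, and $L^2_{\mathcal{I}}=\ker G\oplus\overline{\mathrm{ran}\,G}$. To obtain the expansion, I decompose $X=X_0+\sum_k\langle X,\phi_k\rangle_I\phi_k$ with $X_0\in\ker G$, and apply the bounded operator $G$ term by term using continuity. If the range is finite-dimensional, the sum is finite and the remaining $\lambda_k$ are taken to be zero.

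I expect compactness to be the only substantive step; everything else is Cauchy--Schwarz, Fubini and the standard spectral theorem. One notational point needs care in the vector-valued case. There, the eigenvectors $\phi_k$ should be $\mathbb{R}^n$-valued, or equivalently scalar eigenfunctions tensored with the standard basis of $\mathbb{R}^n$, so that the pairing $\langle X,\phi_k\rangle_I$ in the expansion is well defined.
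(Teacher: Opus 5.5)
Your proposal is correct and is the standard argument the paper relies on implicitly: the paper gives no proof beyond citing Conway for the spectral theorem. Your pointwise bound with Cauchy--Schwarz for (1), symmetry plus Fubini for self-adjointness, the step-function finite-rank approximation for compactness (valid even if $L^2_{\mathcal{I}}$ is non-separable for the extended $\sigma$-algebra $\mathcal{I}$), and the correct observation that the eigenvector family must be finite when $G$ has finite rank supply exactly the details the paper omits.
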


For further details on the spectral decomposition theorem for compact self-adjoint operators, we refer to Conway \cite{Conway-90}. 

	\subsection{Rich Fubini extension}

\begin{dfn}\label{def independent and uncorrelated}
	Let $(\varOmega,\mathcal{F},P)$ and $(I,\mathcal{I},\lambda)$ be two probability spaces, and let $E$ be a Polish space.
	
	(1) A mapping $X:\varOmega\times I\to E$ is called essentially pairwise independent if, for $\lambda$-a.e. $u\in I$ and $\lambda$-a.e. $v\in I$, the random variables $X^u$ and $X^v$ are independent.
	
	(2) A mapping $X:\varOmega\times I\to E$ is called essentially pairwise uncorrelated if, for $\lambda$-a.e. $u\in I$, $X^u$ is square-integrable, and for $\lambda$-a.e. $u\in I$ and $\lambda$-a.e. $v\in I$, the random variables $X^u$ and $X^v$ are uncorrelated.
\end{dfn}

\begin{dfn}\label{def Fubini extension}
	Let $(\varOmega,\mathcal{F},P)$ and $(I,\mathcal{I},\lambda)$ be two probability spaces. Let $(\varOmega\times I,\mathcal{F}\otimes\mathcal{I},P\otimes\lambda)$ denote the usual product space. A probability space $(\varOmega\times I,\mathcal{W},Q)$ is said to be a Fubini extension of $(\varOmega\times I,\mathcal{F}\otimes\mathcal{I},P\otimes\lambda)$ if, for every integrable real-valued function $f(\omega,u)$ on $(\varOmega\times I,\mathcal{W},Q)$, the following conditions hold:
	
	(1) For $\lambda$-a.e. $u\in I$, the function $f^u(\omega)=f(\omega,u)$ is integrable on $(\varOmega,\mathcal{F},P)$.
	
	(2) For $P$-a.e. $\omega\in\varOmega$, the function $f_\omega(u)=f(\omega,u)$ is integrable on $(I,\mathcal{I},\lambda)$.
	
	(3) The function $\int_\varOmega f^u(\omega)\mathrm{d}P$ is integrable on $(I,\mathcal{I},\lambda)$, and the function $\int_I f_\omega(u)\lambda(\mathrm{d}u)$ is integrable on $(\varOmega,\mathcal{F},P)$. Moreover, the following equality holds:
	\begin{equation}\label{Fubini theorem}
		\int_{\varOmega\times I}f\mathrm{d}Q=\int_I\biggl( \int_\varOmega f^u\mathrm{d}P \biggr)\mathrm{d}\lambda=\int_{\varOmega}\biggl( \int_I f_\omega\mathrm{d}\lambda \biggr)\mathrm{d}P.
	\end{equation}
\end{dfn}

\begin{prp}\label{rich Fubini extension}
	Let $I=[0,1]$, let $\mathcal{B}_I$ be the Borel $\sigma$-algebra on $[0,1]$, and let $\lambda_I$ be the Lebesgue measure on $[0,1]$. Let $E$ be a Polish space. Then there exists an extension $(I,\mathcal{I},\lambda)$ of $(I,\mathcal{B}_I,\lambda_I)$, a probability space $(\varOmega,\mathcal{F},P)$, and a Fubini extension $(\varOmega\times I,\mathcal{F}\boxtimes\mathcal{I},P\boxtimes\lambda)$ of $(\varOmega\times I,\mathcal{F}\otimes\mathcal{I},P\otimes\lambda)$ such that, for any measurable mapping $\varphi:(I,\mathcal{I},\lambda)\to\mathcal{P}(E)$, there exists an $\mathcal{F}\boxtimes\mathcal{I}$-measurable process $f:\varOmega\times I\to E$ for which the family of random variables $\{f^u\}_{u\in I}$ is essentially pairwise independent, and for each $u\in I$, $\mathcal{L}(f^u)=\varphi(u)$.
\end{prp}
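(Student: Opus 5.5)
This is Sun's existence theorem for rich Fubini extensions, so my plan follows the standard route: first build one universal rich Fubini extension supporting i.i.d.\ uniform random variables, then obtain arbitrary laws by measurable transformation. The first step is to construct $(\varOmega\times I,\mathcal{F}\boxtimes\mathcal{I},P\boxtimes\lambda)$, an extension $\lambda$ of Lebesgue measure, and a single $\mathcal{F}\boxtimes\mathcal{I}$-measurable process $g:\varOmega\times I\to[0,1]$ whose values $\{g^u\}_{u\in I}$ are essentially pairwise independent and each uniformly distributed on $[0,1]$.

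For this step, I would first try the Loeb-space construction. Take a hyperfinite index set $\{1,\dots,N\}$ with $N$ unlimited, and the hyperfinite sample space $[0,1]^N$ (suitably discretized) with internal product measure. Then pass to Loeb measures $L(\mu)$ and $L(\nu)$ and to the Loeb product of the internal product measure. Keisler's Fubini theorem for Loeb products shows that this product is a Fubini extension of the usual product $L(\mu)\otimes L(\nu)$ in the sense of Definition~\ref{def Fubini extension}. The coordinate process is internally i.i.d.\ uniform, so its standard part is essentially pairwise independent and uniform.

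One must then transfer from the Loeb index space to $[0,1]$ so that $(I,\mathcal{I},\lambda)$ extends $(I,\mathcal{B}_I,\lambda_I)$. I expect this transfer to be the main obstacle. The plan is to use a measure-preserving map from the Loeb space onto $[0,1]$ with Lebesgue measure, for instance the standard part of $k/N$. One then pushes the $\sigma$-algebra forward, or uses an isomorphism argument for atomless Loeb spaces as in Sun--Zhang, producing $\mathcal{I}\supseteq\mathcal{B}_I$ with $\lambda|_{\mathcal{B}_I}=\lambda_I$. The delicate point is that the pushforward must preserve both the Fubini property and essential pairwise independence. I would handle this by realizing $I$ as the image of the Loeb index space under a measure-space isomorphism modulo null sets, rather than a mere pushforward.

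Given $g$, take any measurable $\varphi:(I,\mathcal{I},\lambda)\to\mathcal{P}(E)$. Since $E$ is Polish, there is a jointly Borel map $H:\mathcal{P}(E)\times[0,1]\to E$ such that $H(\mu,U)\sim\mu$ whenever $U$ is uniform. This is a measurable version of the Skorokhod representation, obtained via a Borel isomorphism of $E$ with a Borel subset of $[0,1]$ and quantile functions. Define $f(\omega,u)=H(\varphi(u),g(\omega,u))$. This $f$ is $\mathcal{F}\boxtimes\mathcal{I}$-measurable because the product extension contains $\mathcal{F}\otimes\mathcal{I}$, so $(\omega,u)\mapsto(\varphi(u),g(\omega,u))$ is measurable.

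For each $u$ we get $\mathcal{L}(f^u)=\varphi(u)$ directly. Essential pairwise independence is inherited because $f^u$ and $f^v$ are deterministic measurable functions of the independent variables $g^u$ and $g^v$. The claim "for each $u$" rather than "a.e.\ $u$" follows by redefining $f$ on a $\lambda$-null set of indices, which does not affect measurability in the complete extension $\mathcal{I}$.
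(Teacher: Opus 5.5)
The paper gives no proof of its own here; it cites \cite{Aurell-Carmona-Dayanikli-Lauriere-22}, which rests on \cite{Sun-06,Sun-Zhang-09,Podczeck-10}. Your outer architecture matches that literature and is sound. The Loeb product of a hyperfinite index set with a hyperfinite product sample space is a Fubini extension by Keisler's Fubini theorem. It carries a process $g$ with pairwise independent, uniform coordinates, and $f(\omega,u)=H(\varphi(u),g(\omega,u))$ then handles any measurable $\varphi$. No null-set surgery is needed, since every $g^u$ is uniform.

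The genuine gap is the step you flag as the main obstacle, and neither mechanism you propose closes it.

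\emph{The pushforward along $st$ fails.} It returns exactly the Lebesgue $\sigma$-algebra, since $st^{-1}(A)$ is Loeb measurable iff $A$ is Lebesgue measurable. Moreover, $g$ does not factor through the non-injective map $st$. In fact, no index space that is countably generated modulo null sets can carry such a $g$ at all. Apply Proposition~\ref{exact law of large numbers} to $\mathbf{1}\{g\le 1/2\}$ on a countable generating $\pi$-system. This forces, for a.e.\ $\omega$, the indicator $u\mapsto\mathbf{1}\{g(\omega,u)\le 1/2\}$ to equal $1/2$ $\lambda$-a.e., which is absurd.

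\emph{The isomorphism modulo null sets cannot exist.} Compose it with the identity $([0,1],\mathcal{I})\to([0,1],\mathcal{B}_I)$. This gives a Loeb-measurable $\psi$ into $[0,1]$ that is injective on a conull set $T_1$. Lift $\psi=st\circ F$ a.e.\ with $F$ internal, and take an internal $A\subseteq T_1\cap\{\psi=st\circ F\}$ of positive Loeb measure. Then $M=|A|$ is infinite. Internal pigeonhole with $\lfloor\sqrt{M}\rfloor$ subintervals of ${}^*[0,1]$ gives $a\neq a'$ in $A$ with $F(a)\approx F(a')$, i.e.\ $\psi(a)=\psi(a')$. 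So $[0,1]$ must be identified with a \emph{non-measurable} subset of the Loeb index space, and your sketch contains no such device.

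\emph{A device that works.} Work in a countable ultrapower, where there are only $2^{\aleph_0}$ internal sets. Build by transfinite induction of length $2^{\aleph_0}$ a set $T_0$ with two properties:
\begin{itemize}
\item it meets each fibre $st^{-1}(x)$, $x\in[0,1]$, exactly once;
\item it meets every internal set $A_\alpha$ of positive Loeb measure.
\end{itemize}
At stage $\alpha$, pick a point of $A_\alpha$ in an unused fibre. This is possible because $st(A_\alpha)$ is closed with positive Lebesgue measure, hence of cardinality $2^{\aleph_0}$. Also give the $\alpha$-th point of $[0,1]$ a fibre representative.

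Then $T_0$ has full outer Loeb measure. Transport the trace $\sigma$-algebra and trace measure on $T_0$ to $[0,1]$ via the bijection $st|_{T_0}$. Every Borel $E$ then lies in $\mathcal{I}$ with $\lambda(E)=\lambda_I(E)$, because $st(st^{-1}(E)\cap T_0)=E$.

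Finally, $T_0\times\varOmega$ has full outer measure for the Loeb product. By Keisler's Fubini theorem, a product-measurable set of positive measure has sections of positive measure over a non-null set of indices, and $T_0$ meets that set. Hence the trace is again a Fubini extension, on which $g$ stays essentially pairwise independent with uniform marginals.

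Alternatively, along the lines of \cite{Podczeck-10}, build the Fubini extension directly over a super-atomless extension of Lebesgue measure, avoiding nonstandard analysis altogether.
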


\begin{proof}
	This statement appears in \cite{Aurell-Carmona-Dayanikli-Lauriere-22}. A Fubini extension satisfying the conditions of this proposition is also called a \textit{rich Fubini extension}. For related references, see \cite{Sun-06,Sun-Zhang-09,Podczeck-10}.
\end{proof}

The following result is called the \textit{exact law of large numbers}.

\begin{prp}\label{exact law of large numbers}
	Let $X$ be a square-integrable function on the Fubini extension $(\varOmega\times I,\mathcal{F}\boxtimes\mathcal{I},P\boxtimes\lambda)$. Then the following conditions are equivalent:
	
	(1) $X$ is essentially pairwise uncorrelated.
	
	(2) For every $A\in\mathcal{I}$ with $\lambda(A)>0$, we have
	\begin{equation}
		\int_A X^u(\omega)\lambda(\mathrm{d}u)=\int_A \mathbb{E}[X^u]\lambda(\mathrm{d}u),\quad P\text{-}a.s.\;\omega\in\varOmega.
	\end{equation}
\end{prp}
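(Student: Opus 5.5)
We prove the equivalence by reducing both directions to the single scalar quantity
\[
\Delta_A:=\int_\varOmega\Bigl|\int_A \bigl(X^u-\mathbb{E}[X^u]\bigr)\lambda(\mathrm{d}u)\Bigr|^2\mathrm{d}P ,
\]
which we expand using the Fubini property of $(\varOmega\times I,\mathcal{F}\boxtimes\mathcal{I},P\boxtimes\lambda)$. Statement (2) is exactly $\Delta_A=0$ for all $A$. Without loss of generality $X$ is real-valued, since we may argue componentwise and use polarization for cross-covariances.

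\textbf{Centering.} Set $Y(\omega,u)=X(\omega,u)-\mathbb{E}[X^u]$. Since $X$ is square-integrable on the Fubini extension, $u\mapsto\mathbb{E}[X^u]$ is $\mathcal{I}$-measurable and square-integrable by Definition \ref{def Fubini extension}. Hence $Y$ is square-integrable and $\mathcal{F}\boxtimes\mathcal{I}$-measurable, with $\mathbb{E}[Y^u]=0$ for a.e. $u$. Then $X$ is essentially pairwise uncorrelated if and only if $\mathbb{E}[Y^uY^v]=0$ for $\lambda\otimes\lambda$-a.e. $(u,v)$ off the diagonal. Because $\lambda$ extends Lebesgue measure, the diagonal is $\lambda\otimes\lambda$-null, so this is the same as $\mathbb{E}[Y^uY^v]=0$ for $\lambda\otimes\lambda$-a.e. $(u,v)$.

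\textbf{(1) $\Rightarrow$ (2).} Fix $A$. We compute
\[
\Delta_A=\mathbb{E}\Bigl|\int_A Y^u\,\mathrm{d}\lambda\Bigr|^2=\int_A\!\int_A \mathbb{E}[Y^uY^v]\,\lambda(\mathrm{d}u)\lambda(\mathrm{d}v),
\]
which vanishes by (1). Hence $\int_AY^u\,\mathrm{d}\lambda=0$ $P$-a.s., which is (2).

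The delicate point, and the main obstacle, is justifying this double-integral interchange. The function $(\omega,u,v)\mapsto Y(\omega,u)Y(\omega,v)$ lives on a product of two Fubini extensions, and there is no a priori three-fold Fubini theorem. I would handle it as follows.
\begin{enumerate}
\item Observe that $g(\omega,v):=Y(\omega,v)\int_AY(\omega,u)\,\mathrm{d}\lambda$ is $\mathcal{F}\boxtimes\mathcal{I}$-measurable, being a product of $Y$ with an $\mathcal{F}$-measurable square-integrable variable. Apply \eqref{Fubini theorem} to $g\mathbf 1_A$.
\item For fixed $v$, compute $\mathbb{E}\bigl[Y^v\int_AY^u\,\mathrm{d}\lambda\bigr]$. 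Since $Y^v\in L^2(P)$, the map $Z\mapsto \mathbb{E}[Y^vZ]$ is a continuous linear functional. Approximate $Y$ in $L^2(P\boxtimes\lambda)$ by finite sums $\sum_k\xi_k(\omega)h_k(u)$, or appeal to the Fubini property applied to $Y^v(\omega)Y(\omega,u)$, which is measurable in the extension for a.e. $v$. Either route identifies the expectation with $\int_A\mathbb{E}[Y^vY^u]\,\mathrm{d}\lambda(u)$.
\item As a fallback, use the characterization (Sun's theorem) that a process on a Fubini extension is essentially pairwise uncorrelated iff its sample functions have a.s. constant... ; I would try the approximation argument first.
\end{enumerate}

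\textbf{(2) $\Rightarrow$ (1).} By (2), for every $A$ the same expansion gives $\int_A\int_A c(u,v)\,\mathrm{d}\lambda\,\mathrm{d}\lambda=0$, where $c(u,v)=\mathbb{E}[Y^uY^v]$ is symmetric and lies in $L^1(\lambda\otimes\lambda)$. Polarization with $A\cup B$ and $A\cap B$ turns this into
\[
\int_A\int_B c=0\quad\text{for all measurable }A,B.
\]
The rectangles $A\times B$ generate $\mathcal{I}\otimes\mathcal{I}$, so $c=0$ a.e. Combined with the centering step, this gives (1).

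For both directions, one could alternatively cite \cite{Sun-06}, where this exact law of large numbers is proved.
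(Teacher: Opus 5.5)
The paper gives no argument of its own here; it simply defers to Carmona--Delarue. Your plan is the standard route: centre, then evaluate $\Delta_A$ by two successive applications of \eqref{Fubini theorem}. Direction (1)$\Rightarrow$(2) goes through, provided you commit to the \emph{second} option of your step 2. For each $v$ with $Y^v\in L^2(P)$, the map $(\omega,u)\mapsto Y^v(\omega)\mathbf{1}_A(u)Y(\omega,u)$ is $\mathcal{F}\boxtimes\mathcal{I}$-measurable and integrable, so $\mathbb{E}\bigl[Y^v\int_AY\,\mathrm{d}\lambda\bigr]=\int_A\mathbb{E}[Y^vY^u]\lambda(\mathrm{d}u)$. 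Together with step 1, this writes $\Delta_A$ as an \emph{iterated} integral, which vanishes under (1) read literally (for a.e.\ $v$, for a.e.\ $u$).

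The approximation option you say you would try first cannot work. Finite sums $\sum_k\xi_k(\omega)h_k(u)$ are $\mathcal{F}\otimes\mathcal{I}$-measurable, so their $L^2(P\boxtimes\lambda)$-closure lies in the closed subspace $L^2(\mathcal{F}\otimes\mathcal{I})$. A centred essentially pairwise uncorrelated process with nonzero variances is orthogonal to that subspace. This is Doob's measurability obstruction, the very reason Fubini extensions are needed, so the approximation fails precisely in the case of interest.

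The genuine gap is in (2)$\Rightarrow$(1). You assert $c(u,v)=\mathbb{E}[Y^uY^v]\in L^1(\lambda\otimes\lambda)$ and conclude $c=0$ because it vanishes on rectangles. Nothing gives you joint $\mathcal{I}\otimes\mathcal{I}$-measurability of $c$: $Y$ is only $\mathcal{F}\boxtimes\mathcal{I}$-measurable, and, as you noticed yourself, there is no three-fold Fubini theorem. Without it, $\int_A\int_Bc$ is only an iterated integral, and the generating-rectangles argument has nothing to act on. Nor can you pass to countably many $B$. The exceptional $u$-set obtained from $\int_A\bigl(\int_Bc\,\mathrm{d}\lambda\bigr)\mathrm{d}\lambda=0$ depends on $B$, and on a rich Fubini extension $\mathcal{I}$ has no countable generator modulo null sets. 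Vanishing against Borel $B$ alone does not force an $\mathcal{I}$-measurable function to vanish. Your centring step makes the same silent assumption when it rewrites (1) as a $\lambda\otimes\lambda$-a.e.\ statement.

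The fix is to use (2) before integrating in $u$. Let $N$ be the $\lambda$-null set off which $Y^u\in L^2(P)$. For $u\notin N$ and any $A\in\mathcal{I}$, (2) gives $Y^u\int_AY\,\mathrm{d}\lambda=0$ $P$-a.s. Applying \eqref{Fubini theorem} to $(\omega,v)\mapsto Y^u(\omega)\mathbf{1}_A(v)Y(\omega,v)$ turns this into $\int_A\mathbb{E}[Y^uY^v]\lambda(\mathrm{d}v)=0$. Since $N$ does not depend on $A$, and $v\mapsto\mathbb{E}[Y^uY^v]$ is $\mathcal{I}$-measurable and integrable, this function vanishes $\lambda$-a.e. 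That is exactly the iterated condition in the definition, with no polarization, symmetry or diagonal argument needed.

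Running the same argument with $Y^u_i$ and $Y_j$ also settles cross-covariances in the vector case. By contrast, polarizing over real combinations $a^\top X$, as you propose, only yields the symmetrized quantity $\mathrm{Cov}(X^u_i,X^v_j)+\mathrm{Cov}(X^u_j,X^v_i)$.
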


\begin{proof}
	A detailed proof of the theorem can be found on pp. 205 of \cite{Carmona-Delarue-18-1}.
\end{proof}

The following proposition establishes the existence of a family of essentially pairwise independent Brownian motions and their initial conditions.

\begin{prp}\label{existence of essentially pairwise independent BM}
	Let $(\varOmega\times I,\mathcal{F}\boxtimes\mathcal{I},P\boxtimes\lambda)$ be the rich Fubini extension as in Proposition \ref{rich Fubini extension}. Let $\upsilon:I\to\mathcal{P}_2(\mathbb{R})$ be an $\mathcal{I}$-measurable function. Then there exists a mapping $\mathbb{B}=(B,x_0):\varOmega\times I\to\mathcal{C}\times\mathbb{R}$ such that $\mathbb{B}$ is essentially pairwise independent, for each $u\in I$, $B^u$ is a one-dimensional standard Brownian motion, and $\mathcal{L}(x_0^u)=\upsilon(u)$.
\end{prp}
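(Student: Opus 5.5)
The plan is to apply Proposition \ref{rich Fubini extension} with the Polish space $E=\mathcal{C}\times\mathbb{R}$, where $\mathcal{C}=C([0,T];\mathbb{R})$ carries the sup-norm topology. This space is Polish because it is a product of two Polish spaces. The whole task then reduces to producing a suitable measurable law-valued map $\varphi:I\to\mathcal{P}(E)$.

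\textbf{Step 1 (the map $\varphi$).} Let $\mu_W\in\mathcal{P}(\mathcal{C})$ be the Wiener measure, i.e.\ the law of a one-dimensional standard Brownian motion started at $0$. Define
\begin{equation*}
\varphi(u):=\mu_W\otimes\upsilon(u)\in\mathcal{P}(\mathcal{C}\times\mathbb{R}),\quad u\in I.
\end{equation*}
The first factor does not depend on $u$. This choice encodes, for each fixed $u$, that the path component is Brownian and independent of the initial condition.

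\textbf{Step 2 (measurability of $\varphi$).} We need $\varphi$ to be $\mathcal{I}$-measurable, where $\mathcal{P}(E)$ carries the Borel $\sigma$-algebra of weak convergence. That $\sigma$-algebra is generated by the evaluation maps $\mu\mapsto\mu(A)$ for Borel $A\subset E$, and equivalently by the maps $\mu\mapsto\int g\,\mathrm{d}\mu$ with $g$ bounded continuous. The product map $(\mu,\nu)\mapsto\mu\otimes\nu$ is continuous from $\mathcal{P}(\mathcal{C})\times\mathcal{P}(\mathbb{R})$ to $\mathcal{P}(E)$ under the weak topologies. The embedding $\mathcal{P}_2(\mathbb{R})\hookrightarrow\mathcal{P}(\mathbb{R})$ is continuous, since $W_2$-convergence implies weak convergence. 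Hence $\varphi$ is the composition of the measurable map $\upsilon$ with continuous maps, and is therefore measurable. I expect this topological bookkeeping to be the only point needing care. The continuity of the product map follows by checking it on functions of the form $g_1(x)g_2(y)$, which together with tightness identify weak limits.

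\textbf{Step 3 (conclusion).} Proposition \ref{rich Fubini extension} now gives an $\mathcal{F}\boxtimes\mathcal{I}$-measurable $\mathbb{B}=(B,x_0):\varOmega\times I\to\mathcal{C}\times\mathbb{R}$. Its components $\{\mathbb{B}^u\}_{u\in I}$ are essentially pairwise independent, and $\mathcal{L}(B^u,x_0^u)=\mu_W\otimes\upsilon(u)$ for every $u$. Taking marginals gives $\mathcal{L}(x_0^u)=\upsilon(u)$ and $\mathcal{L}(B^u)=\mu_W$. A continuous process whose law is Wiener measure is a standard Brownian motion with respect to its own filtration. The product structure of the law also shows that $x_0^u$ is independent of $B^u$. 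Consequently $B^u$ remains a Brownian motion with respect to the filtration generated by $x_0^u$ and $B^u$, which is the filtration needed later for the state equations.
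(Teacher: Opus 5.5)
Your proposal is correct and follows the paper's proof: both apply Proposition \ref{rich Fubini extension} with $E=\mathcal{C}\times\mathbb{R}$ and $\varphi(u)=\mu_W\otimes\upsilon(u)$, then read off the marginals. The paper leaves two points implicit that you supply: the measurability of $\varphi$, and the observation that the product law makes $x_0^u$ independent of $B^u$, so $B^u$ is a Brownian motion for the filtration generated by $(x_0^u,B^u)$.
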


\begin{proof}
	Let $T>0$ and denote $\mathcal{C}=C(0,T;\mathbb{R})$. Then $\mathcal{C}\times\mathbb{R}$ is a Polish space with the $\sigma$-algebra $\mathcal{B}(\mathcal{C})\otimes\mathcal{B}(\mathbb{R})$. For each $u\in I$, define $\varphi(u)=\mu(u)\otimes\upsilon(u)$, where $\mu(u)$ denotes the Wiener measure on $\mathcal{C}$. By Proposition \ref{rich Fubini extension}, there exists an $\mathcal{F}\boxtimes\mathcal{I}$-measurable process $\mathbb{B}:\varOmega\times I\to\mathcal{C}\times\mathbb{R}$, $\mathbb{B}=(B,x_0)$, that is essentially pairwise independent, and for each $u\in I$, $\mathcal{L}(B^u,x_0^u)=\mu(u)\otimes\upsilon(u)$.
\end{proof}

For convenience, we shall refer to $B\equiv(B^u)_{u\in I}$ as a family of essentially pairwise independent Brownian motions. 
	
	\section{Problem formulation}
	
	\subsection{Notations}

Let $A^\top$ denote the transpose of a matrix $A$. The set of all $n \times n$ real symmetric matrices is denoted by $\mathcal{S}^n$, the set of all positive semidefinite matrices by $\mathcal{S}^n_+$, and the set of all positive definite matrices by $\hat{\mathcal{S}}^n_+$. For a matrix $Q \in \mathcal{S}^n$ and a constant $K \ge 0$, we write $Q \ge K$ if $x^\top Q x \ge K |x|^2$ for all $x \in \mathbb{R}^n$. For an $\mathcal{S}^n$-valued process $Q = (Q_t)_{t \in [0,T]}$ and a constant $K \ge 0$, we write $Q \gg K$ if for every $t \in [0,T]$ and all $x \in \mathbb{R}^n$, $x^\top Q_t x \ge K |x|^2$.

Let $(I, \mathcal{I}, \lambda)$, $(\varOmega, \mathcal{F}, P)$, and $(\varOmega \times I, \mathcal{F} \boxtimes \mathcal{I}, P \boxtimes \lambda)$ be the extension, sample space, and the corresponding rich Fubini extension, respectively, that satisfy the conclusions of Proposition \ref{rich Fubini extension}. Denote by $\mathbb{E}$ the mathematical expectation on $(\varOmega, \mathcal{F}, P)$, and by $\mathbb{E}^\boxtimes$ the mathematical expectation on $(\varOmega \times I, \mathcal{F} \boxtimes \mathcal{I}, P \boxtimes \lambda)$. Let $(B, x_0)$ be the essentially pairwise independent Brownian motions and initial states constructed in Proposition \ref{existence of essentially pairwise independent BM}. For each $u \in I$, the filtration $\{\mathcal{F}_t^u\}_{t \in [0,T]}$ is defined by
\begin{equation}\label{filtration}
	\mathcal{F}_t^u = \sigma(x_0^u, W_s^u, 0 \le s \le t) \vee \mathcal{N},
\end{equation}
where $\mathcal{N}$ denotes the collection of all $P$-null sets.

We define the following function spaces.

$L^\infty(0,T;\mathbb{R}^n)$: the set of all bounded functions from $[0,T]$ to $\mathbb{R}^n$.

$C^\infty(0,T;\mathbb{R}^n)$: the set of all continuous functions from $[0,T]$ to $\mathbb{R}^n$.

$L_\mathcal{I}^0(0,T;\mathbb{R}^n)$: the set of all $\mathbb{R}^n$-valued, $\mathcal{I}$-measurable processes on $[0,T]$.

$L_\mathcal{I}^2(0,T;\mathbb{R}^n)$: the set of all $X \in L_\mathcal{I}^0(0,T;\mathbb{R}^n)$ satisfying $\int_I \int_0^T |X_t^u|^2 \mathrm{d}t \lambda(\mathrm{d}u) < \infty$.

$L_{\mathcal{I}}^\infty(0,T;\mathbb{R}^n)$: the set of all bounded processes in $L_{\mathcal{I}}^0(0,T;\mathbb{R}^n)$.

$L_\boxtimes^0(0,T;\mathbb{R}^n)$: the set of all $\mathbb{R}^n$-valued, $\mathcal{F} \boxtimes \mathcal{I}$-measurable processes $X = (X_t^u)_{u \in I, t \in [0,T]}$ such that for each $u \in I$, $X^u$ is adapted to the filtration $\{\mathcal{F}_t^u\}_{t \in [0,T]}$.

$L_\boxtimes^2(0,T;\mathbb{R}^n)$: the set of all $X \in L_\boxtimes^0(0,T;\mathbb{R}^n)$ satisfying $\mathbb{E}^\boxtimes\big[ \int_0^T |X_t|^2 \mathrm{d}t \big] < \infty$.

$L_\boxtimes^2(\varOmega \times I; C(0,T;\mathbb{R}^n))$: the set of all $X \in L_\boxtimes^2(0,T;\mathbb{R}^n)$ that are continuous in $t$ and satisfy $\mathbb{E}^\boxtimes\big[ \sup_{0 \le t \le T} |X_t|^2 \big] < \infty$.

$\bar{L}_\boxtimes^2(0,T;\mathbb{R}^n)$: the set of all almost surely determined processes in $L_\boxtimes^2(0,T;\mathbb{R}^n)$, i.e., for every $X \in \bar{L}_\boxtimes^2(0,T;\mathbb{R}^n)$, there exists $\tilde{X} \in L_\mathcal{I}^2(0,T;\mathbb{R}^n)$ such that $X_t^u(\omega) = \tilde{X}_t^u, \forall t \in [0,T], P \boxtimes \lambda\text{-a.e.}$.

$L_\boxtimes^\infty(0,T;\mathbb{R}^n)$: the set of all bounded processes in $L_\boxtimes^0(0,T;\mathbb{R}^n)$.

$L_{\boxtimes}^2(\mathbb{R}^n)$: the set of all $\mathcal{F} \boxtimes \mathcal{I}$-measurable, $\mathbb{R}^n$-valued random variables.

For a sub-$\sigma$-algebra $\mathcal{G}$ of $\mathcal{F}$, $L_{\mathcal{G}}^2(\mathbb{R}^n)$ denotes the set of all $\mathcal{G}$-measurable, $\mathbb{R}^n$-valued square-integrable random variables.

For a filtration $\{\mathcal{H}_t\}_{t \in [0,T]}$ on $\mathcal{F}$, $L_{\mathcal{H}}^2(0,T;\mathbb{R}^n)$ denotes the set of all $\{\mathcal{H}_t\}$-adapted, $\mathbb{R}^n$-valued processes on $[0,T]$ satisfying $\mathbb{E}\big[ \int_0^T |X_t|^2 \, \mathrm{d}t \big] < \infty$.

$L_{\boxtimes_T}^2(\mathbb{R}^n)$: the set of all $X \in L_\boxtimes^2(\mathbb{R}^n)$ such that for each $u \in I$, $X^u \in L_{\mathcal{F}_T}^2(\mathbb{R}^n)$.

In the estimates for the equations, we use the abbreviations $\mathbb{H}_\boxtimes^2(\mathbb{R}^n) = L_\boxtimes^2(0,T;\mathbb{R}^n)$ and $\mathbb{S}_\boxtimes^2(\mathbb{R}^n) = L_\boxtimes^2(\varOmega \times I; C(0,T;\mathbb{R}^n))$ for brevity. 

	\subsection{Formulation of the problem}

Let $(\varOmega\times I,\mathcal{F}\boxtimes\mathcal{I},P\boxtimes\lambda)$ be the rich Fubini extension constructed in Proposition \ref{rich Fubini extension}, where $(\varOmega,\mathcal{F},P)$ is the sample space, and $(I,\mathcal{I},\lambda)$ is the index space for the followers. Let $W^f\equiv(W^{f,u})_{u\in I}$ be a family of essentially pairwise independent Brownian motions, and $W^l$ be a Brownian motion independent of all $W^{f,u}$.

The superscript $f$ denotes the system and control variables for the followers, while the superscript $l$ denotes those for the leader. Assume that each follower's state is $n_1$-dimensional, the leader's state is $n_2$-dimensional, each follower's control is $m_1$-dimensional, the leader's control is $m_2$-dimensional, and all Brownian motions are one-dimensional. The system dynamics for the followers and the leader are given by:
\begin{equation}\label{state}
	\begin{cases}
		\mathrm{d}X_t^{f,u}=\big(A_t^{f}X_t^{f,u}+B_t^{f}\alpha_t^{f,u}+C_t^{f}GX_t^{f,u}+b_t^{f}\big)\mathrm{d}t\\
		\qquad\qquad +\big(D_t^{f}X_t^{f,u}+E_t^{f}\alpha_t^{f,u}+F_t^{f}GX_t^{f,u}+\sigma_t^{f}\big)\mathrm{d}W_t^{f,u},\quad u\in I,\\
		\mathrm{d}X_t^l=\big(A_t^lX_t^l+B_t^l\alpha_t^l+C_t^lM_t^f+b_t^l\big)\mathrm{d}t\\
		\qquad\qquad +\big(D_t^lX_t^l+E_t^{l}\alpha_t^l+F_t^lM_t^f+\sigma_t^l\big)\mathrm{d}W_t^l,\\
		X_0^{f,u}=x_0^{f,u},\quad u\in I,\quad X_0^l=x_0^l,
	\end{cases}
\end{equation}
where $GX_t^{f,u}$ and $M_t^f$ are defined in (\ref{graphon intro}).

The admissible control sets for the followers and the leader are defined as:
\begin{equation*}
	\mathcal{U}^{f}=L^2_\boxtimes(0,T;\mathbb{R}^{m_1}),\quad \mathcal{U}^l=L_{\mathcal{F}^l}^2(0,T;\mathbb{R}^{m_2}),
\end{equation*}
respectively. 
The cost functional of a follower is:
\begin{equation}\label{cost of followers}
	J^{f,u}(\alpha^{f,u};\alpha^{f,-u},\alpha^l)=\frac{1}{2}\mathbb{E}\biggl[ \int_0^T\Big\{U^{f,u\top}_t Q_t^{f}U_t^{f,u}+\alpha_t^{f,u\top}R_t^{f}\alpha_t^{f,u}\Big\}\mathrm{d}t+U_T^{f,u\top}G^{f}U_T^{f,u}\biggr],
\end{equation}
where $U_t^{f,u}, Q_t^{f}, G^{f}$ are defined in (\ref{U f intro}). The cost functional of the leader is:
\begin{equation}\label{cost of leader}
	J^l(\alpha^l;\alpha^f)=\frac{1}{2}\mathbb{E}\biggl[ \int_0^T\Big\{U_t^{l\top} Q_t^lU_t^l+\alpha_t^{l\top}R_t^l\alpha_t^l\Big\}\mathrm{d}t+U_T^{l\top}G^lU_T^l \biggr],
\end{equation}
where $U_t^l, Q_t^l, G^l$ are defined in (\ref{U f intro}).

Each follower and the leader seek to optimize their respective cost functionals. We consider a hierarchical decision structure: the leader first specifies the admissible control set $\mathcal{U}^l$. For each control $\alpha^l\in\mathcal{U}^l$ chosen by the leader, the followers compete to attain a Nash equilibrium $\hat{\alpha}^f(\alpha^l)\in\mathcal{U}^f$. The leader, anticipating the followers' Nash equilibrium, then selects $\hat{\hat{\alpha}}^l\in\mathcal{U}^l$ to optimize the cost functional $J^l(\alpha^l,\hat{\alpha}^f(\alpha^l))$.

The Stackelberg-Nash equilibrium for the leader and the followers will be solved in Section 4. The remainder of this section is devoted to establishing the well-posedness of the problem framework. 

	\subsection{Existence and uniqueness of solutions of state equation}

This section establishes that for any admissible control, the follower's system admits a unique solution, and the corresponding graphon aggregation is deterministic.

The following result extends the domain of the graphon operator to the space of square-integrable functions $L_\boxtimes^2(\mathbb{R}^n)$ on the rich Fubini extension.

\begin{prp}\label{extension of domain of graphon operator}
	Let $G\in\mathcal{W}_0$. Define the operator generated by the graphon as
	\begin{equation*}
		GX^u:=\int_I G(u,v)X^v\lambda(\mathrm{d}v),\quad X\in L_\boxtimes^2(\mathbb{R}^n).
	\end{equation*}
	Then $G$ is a bounded linear operator from $L_\boxtimes^2(\mathbb{R}^n)$ to $L_\boxtimes^2(\mathbb{R}^n)$, and its operator norm is at most $||G||_\infty$.
\end{prp}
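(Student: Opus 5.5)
The plan has three steps: show that $GX$ is well defined and $\mathcal{F}\boxtimes\mathcal{I}$-measurable, establish the norm bound using the Fubini property of Definition \ref{def Fubini extension}, and note that linearity is immediate.

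\textbf{Well-definedness.} Fix $X\in L_\boxtimes^2(\mathbb{R}^n)$. By condition (2) of Definition \ref{def Fubini extension}, applied to the integrable function $|X|^2$ and hence to $|X|$, for $P$-a.e. $\omega$ the section $v\mapsto X^v(\omega)$ lies in $L^2(I,\lambda)\subset L^1(I,\lambda)$. Since $G$ is bounded and $\mathcal{I}\otimes\mathcal{I}$-measurable, the integral $GX^u(\omega)=\int_I G(u,v)X^v(\omega)\lambda(\mathrm{d}v)$ is therefore defined for every $u$ and $P$-a.e. $\omega$.

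\textbf{Measurability.} I expect this to be the main obstacle, because the Fubini extension $\mathcal{F}\boxtimes\mathcal{I}$ is not a product $\sigma$-algebra, so a kernel integral need not obviously be $\mathcal{F}\boxtimes\mathcal{I}$-measurable. My first approach is to approximate $G$ in $L^2(I\times I)$ by finite sums $\sum_k c_k\mathbf{1}_{S_k}(u)\mathbf{1}_{T_k}(v)$ with $S_k,T_k\in\mathcal{I}$; such approximations exist by the definition of the product $\sigma$-algebra. For such a step kernel,
\[
GX^u(\omega)=\sum_k c_k\mathbf{1}_{S_k}(u)\int_{T_k}X^v(\omega)\lambda(\mathrm{d}v).
\]
The integral $\int_{T_k}X^v(\omega)\lambda(\mathrm{d}v)$ is $\mathcal{F}$-measurable by Fubini property (3) applied to $X\mathbf{1}_{T_k}$, and $\mathbf{1}_{S_k}(u)$ is $\mathcal{I}$-measurable. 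Their product is measurable with respect to $\mathcal{F}\otimes\mathcal{I}\subset\mathcal{F}\boxtimes\mathcal{I}$. By Cauchy--Schwarz in $v$, the approximations converge to $GX$ in $L^2(P\boxtimes\lambda)$, so a subsequence converges $P\boxtimes\lambda$-a.e. Measurability of $GX$ then follows, using completeness of the extension. Alternatively, one may simply adopt the convention that $GX$ denotes this a.e. limit.

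\textbf{Norm bound.} For fixed $\omega$ in the full-measure set above, Proposition \ref{properties of graphon operators}(1), equation (\ref{properties of graphon operators-2}), applied to the deterministic function $v\mapsto X^v(\omega)$ gives
\[
\int_I|GX^u(\omega)|^2\lambda(\mathrm{d}u)\le\|G\|_\infty^2\int_I|X^u(\omega)|^2\lambda(\mathrm{d}u).
\]
This pointwise inequality can also be seen directly from $|GX^u|\le\|G\|_\infty\int_I|X^v|\,\mathrm{d}\lambda$ together with Jensen's inequality. Integrating over $\Omega$ and applying the Fubini identity (\ref{Fubini theorem}) to both $|GX|^2$ and $|X|^2$ yields
\[
\mathbb{E}^\boxtimes|GX|^2\le\|G\|_\infty^2\,\mathbb{E}^\boxtimes|X|^2 .
\]
This shows $GX\in L_\boxtimes^2(\mathbb{R}^n)$ and that the operator norm is at most $\|G\|_\infty$.

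\textbf{Linearity.} Linearity follows immediately from linearity of the $\lambda$-integral.
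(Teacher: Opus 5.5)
Your norm bound is the paper's argument: apply Proposition \ref{properties of graphon operators} for fixed $\omega$, then integrate using the Fubini identity. The paper does not prove well-definedness or $\mathcal{F}\boxtimes\mathcal{I}$-measurability of $GX$; it defers that to \cite{Aurell-Carmona-Lauriere-22}. So the only part that goes beyond the paper is your measurability step, and as written it does not close.

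\textbf{The gap.} The claim that the approximations ``converge to $GX$ in $L^2(P\boxtimes\lambda)$'' already assumes $GX$ is $\mathcal{F}\boxtimes\mathcal{I}$-measurable, which is what you are trying to prove. Cauchy--Schwarz legitimately gives two things. First, $(G_mX)_m$ is Cauchy in $L^2(P\boxtimes\lambda)$, so a subsequence converges $P\boxtimes\lambda$-a.e.\ to some measurable $Y$. Second, for $P$-a.e.\ $\omega$, the sections $G_mX(\omega,\cdot)$ converge to $GX(\omega,\cdot)$ in $L^2(\lambda)$. To conclude $GX=Y$ a.e., you need $\{GX\neq Y\}$ to lie inside a $P\boxtimes\lambda$-null set of $\mathcal{F}\boxtimes\mathcal{I}$. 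Knowing only that its $\omega$-sections are $\lambda$-null does not give this for a set not yet known to be measurable. This is exactly the non-product subtlety you flagged. Completeness does not rescue it, because completeness only absorbs subsets of sets already known to be null. Your fallback of redefining $GX$ as the a.e.\ limit changes the operator in the statement.

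\textbf{The fix.} Use the product structure of the Cauchy--Schwarz bound. Set $h_m(u):=\bigl(\int_I|G_m(u,v)-G(u,v)|^2\lambda(\mathrm{d}v)\bigr)^{1/2}$. By Tonelli, $h_m$ is $\mathcal{I}$-measurable and $\int_I h_m^2\,\mathrm{d}\lambda=\|G_m-G\|_{L^2(\lambda\otimes\lambda)}^2\to0$. Let $\varOmega_0\in\mathcal{F}$ be the full-measure set on which $X(\omega,\cdot)\in L^2(\lambda)$. Then for every $u\in I$ and every $\omega\in\varOmega_0$,
\[
|G_mX^u(\omega)-GX^u(\omega)|\le h_m(u)\Bigl(\int_I|X^v(\omega)|^2\lambda(\mathrm{d}v)\Bigr)^{1/2}.
\]
Choose a subsequence with $h_{m_j}(u)\to0$ for all $u$ outside a $\lambda$-null set $E\in\mathcal{I}$. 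Then $G_{m_j}X\to GX$ at \emph{every} point of $\varOmega_0\times(I\setminus E)$. This set lies in $\mathcal{F}\otimes\mathcal{I}\subset\mathcal{F}\boxtimes\mathcal{I}$ and has full measure, so $GX$ is measurable there as a pointwise limit of measurable functions. Only now is completeness, or modification on a product-type null set, legitimately invoked.

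Alternatively, use a monotone class argument over bounded $\mathcal{I}\otimes\mathcal{I}$-measurable kernels. Start from rectangle indicators, which your Fubini property (3) argument handles, and pass to bounded monotone limits by dominated convergence pointwise in $(\omega,u)\in\varOmega_0\times I$. This gives measurability directly.

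With either repair, the rest of your proposal is correct: well-definedness, the sectionwise bound, integration via the Fubini identity, and linearity.
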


\begin{proof}
	For the proof that $G$ is a bounded linear operator, we refer to \cite{Aurell-Carmona-Lauriere-22}. By Proposition \ref{properties of graphon operators}, we have
	\begin{equation*}
		\mathbb{E}^\boxtimes[|GX|^2]\le ||G||_\infty^2\cdot \mathbb{E}^\boxtimes[|X|^2],
	\end{equation*}
	which implies that the operator norm of $G$ is bounded by $||G||_\infty$.
\end{proof}

\begin{Ass}[A1]\label{assumptions of coefficients}
	The following coefficients are assumed to satisfy:
	\begin{equation*}
		\begin{cases}
			A^f,C^f,D^f,F^f\in L^\infty(0,T;\mathbb{R}^{n_1\times n_1}),\quad B^f,E^f\in L^\infty(0,T;\mathbb{R}^{n_1\times m_1}),\\
			A^l,D^l\in L^\infty(0,T;\mathbb{R}^{n_2\times n_2}),\quad B^l,E^l\in L^\infty(0,T;\mathbb{R}^{n_2\times m_2}),\\
			C^l,F^l\in L^\infty(0,T;\mathbb{R}^{n_2\times n_1}),\quad b^{f},\sigma^f\in L^2(0,T;\mathbb{R}^{n_1}),\quad b^l,\sigma^l\in L^2(0,T;\mathbb{R}^{n_2}),\\
			Q^f\in L^\infty(0,T;\mathbb{S}_+^{2n_1+n_2}),\quad R^f\in L^\infty(0,T;\mathbb{S}_+^{m_1}),\quad G^f\in \mathbb{S}_+^{2n_1+n_2},\\
			Q^l\in L^\infty(0,T;\mathbb{S}_+^{n_1+n_2}),\quad R^l\in L^\infty(0,T;\mathbb{S}_+^{m_2}),\quad G^l\in \mathbb{S}_+^{n_1+n_2}.
		\end{cases}
	\end{equation*}
\end{Ass}

\begin{prp}\label{wellposedness of SDE}
	Assume that assumption (A1) holds. For any $\alpha^f\in\mathcal{U}^f$ and $z\in L_{\boxtimes}^2(0,T;\mathbb{R}^n)$, the equation
	\begin{equation}\label{SDE followers}
		\begin{cases}
			\mathrm{d}X_t^{f,u}=\big(A_t^{f}X_t^{f,u}+B_t^{f}\alpha_t^{f,u}+C_t^{f}z^{u}_t+b_t^{f}\big)\mathrm{d}t\\
			\qquad\qquad +\big(D_t^{f}X_t^{f,u}+E_t^{f}\alpha_t^{f,u}+F_t^{f}z^{u}_t+\sigma_t^{f}\big)\mathrm{d}W_t^{f,u},\quad u\in I,\\
			X_0^{f,u}=x_0^{f,u},\quad u\in I,
		\end{cases}
	\end{equation}
	admits a unique solution $X^f\in\mathbb{S}_\boxtimes^2(\mathbb{R}^n)$.
\end{prp}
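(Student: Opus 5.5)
The plan is to treat (\ref{SDE followers}) as a family, indexed by $u\in I$, of \emph{decoupled} linear SDEs. Once $z$ and $\alpha^f$ are fixed there is no graphon coupling left, so the only genuinely new issue is joint measurability in $(\omega,u)$ with respect to $\mathcal{F}\boxtimes\mathcal{I}$ and the integrability of the family over $I$.

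First fix $u\in I$ such that $\alpha^{f,u}$ and $z^u$ lie in $L^2_{\mathcal{F}^u}(0,T;\cdot)$ and $x_0^{f,u}$ is square integrable. By the Fubini property (Definition \ref{def Fubini extension}) this holds for $\lambda$-a.e.\ $u$, since $\mathbb{E}^\boxtimes[\int_0^T|\alpha^f_t|^2+|z_t|^2\,\mathrm{d}t]<\infty$; on the null exceptional set we set $X^{f,u}\equiv 0$. Under (A1) the SDE for this fixed $u$ has bounded deterministic coefficients $A^f,D^f$ acting on $X^{f,u}$, and inhomogeneous terms $B^f\alpha^{f,u}+C^fz^u+b^f$ and $E^f\alpha^{f,u}+F^fz^u+\sigma^f$ that belong to $L^2_{\mathcal{F}^u}(0,T)$. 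Classical linear SDE theory therefore gives a unique continuous $\mathcal{F}^u$-adapted solution. The usual BDG/Gronwall argument then gives the estimate
\begin{equation*}
\mathbb{E}\Big[\sup_{0\le t\le T}|X_t^{f,u}|^2\Big]\le K\,\mathbb{E}\Big[|x_0^{f,u}|^2+\int_0^T\big(|\alpha_t^{f,u}|^2+|z_t^u|^2+|b_t^f|^2+|\sigma_t^f|^2\big)\mathrm{d}t\Big],
\end{equation*}
where $K$ depends only on $T$ and the $L^\infty$ bounds of the coefficients, and in particular not on $u$. Integrating this over $I$ with respect to $\lambda$ and using (\ref{Fubini theorem}) yields $\mathbb{E}^\boxtimes[\sup_t|X_t^f|^2]<\infty$, provided the collection $(X^{f,u})_{u\in I}$ is $\mathcal{F}\boxtimes\mathcal{I}$-measurable.

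Measurability is the main obstacle. The stochastic integral against $W^{f,u}$ is defined separately for each $u$, so it is not obvious that $(\omega,u)\mapsto X^{f,u}_t(\omega)$ is measurable for the extended $\sigma$-algebra. To handle this I would avoid pathwise-in-$u$ definitions and instead construct the solution by Picard iteration on the Banach space $\mathbb{S}^2_\boxtimes(\mathbb{R}^n)$, or first on $\mathbb{H}^2_\boxtimes$. Define $\Phi(X)_t=x_0^f+\int_0^t(\cdots)\mathrm{d}s+\int_0^t(\cdots)\mathrm{d}W_s^{f}$. The Lebesgue integral clearly preserves $\mathcal{F}\boxtimes\mathcal{I}$-measurability. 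For the stochastic integral, the step I would check is that for simple (piecewise constant in $t$) $\mathcal{F}\boxtimes\mathcal{I}$-measurable integrands it is a finite sum of products of measurable maps, so it is measurable. A general integrand is an $\mathbb{H}^2_\boxtimes$-limit of simple ones, and by the Itô isometry applied for each $u$ and then integrated over $I$ via Fubini, the stochastic integrals converge in $L^2(P\boxtimes\lambda)$. A subsequence therefore converges $P\boxtimes\lambda$-a.e., which gives measurability of the limit (this is the construction used in \cite{Aurell-Carmona-Lauriere-22}, which I would cite). Adaptedness of each $X^u$ to $\mathcal{F}^u_t$ is preserved at each step.

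With $\Phi$ well defined on $\mathbb{S}^2_\boxtimes$, the standard estimate
\begin{equation*}
\mathbb{E}^\boxtimes\Big[\sup_{s\le t}|\Phi(X)_s-\Phi(Y)_s|^2\Big]\le K\int_0^t\mathbb{E}^\boxtimes\Big[\sup_{r\le s}|X_r-Y_r|^2\Big]\mathrm{d}s
\end{equation*}
follows from Doob's inequality applied for each $u$ and then integrated over $I$. Iterating this estimate, or using a weighted norm $e^{-\beta t}$, makes $\Phi$ a contraction, and its fixed point is the desired solution in $\mathbb{S}^2_\boxtimes(\mathbb{R}^n)$. For uniqueness, suppose two solutions exist. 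Their difference solves the homogeneous linear equation with zero initial datum for a.e.\ $u$, so Gronwall gives zero for $\lambda$-a.e.\ $u$, i.e.\ $P\boxtimes\lambda$-a.e. The resulting fixed point also agrees, for a.e.\ $u$, with the per-$u$ classical solution from the first step.
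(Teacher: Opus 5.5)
Your proposal is correct and follows essentially the paper's route: the paper regards $W^f$ as a single Brownian motion on the rich Fubini extension and invokes classical SDE theory there, which amounts to your Picard iteration in $\mathbb{S}^2_\boxtimes(\mathbb{R}^n)$. Your construction of the stochastic integral through $\mathcal{F}\boxtimes\mathcal{I}$-measurable simple integrands, and your identification of the fixed point with the $u$-wise classical solutions, supply the measurability and adaptedness details that the paper's one-line argument leaves implicit.
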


\begin{proof}
	Interpreting $W^f$ as a standard Brownian motion on the rich Fubini extension $(\varOmega\times I,\mathcal{F}\boxtimes\mathcal{I},P\boxtimes\lambda)$, the conclusion follows from classical SDE theory (for example, Chapter 1 of \cite{Yong-Zhou-99}). 
\end{proof}

For a fixed $\alpha^f\in\mathcal{U}^f$, define the mapping $\varPhi:L_{\boxtimes}^2(0,T;\mathbb{R}^n)\to L_{\boxtimes}^2(0,T;\mathbb{R}^n)$ by
\begin{equation*}
	\varPhi(z):=GX,\quad z\in L_{\boxtimes}^2(0,T;\mathbb{R}^n),
\end{equation*}
where $X$ is the solution to equation (\ref{SDE followers}) with parameters $(\alpha^f,z)$.

\begin{prp}\label{fixed point}
	For any fixed $\alpha^f\in\mathcal{U}^f$, the mapping $\varPhi$ has a unique fixed point. In particular, the restriction of $\varPhi$ to the subspace $\bar{L}_\boxtimes^2(0,T;\mathbb{R}^n)$ also possesses a unique fixed point.
\end{prp}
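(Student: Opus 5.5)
Since $\varPhi$ is built by solving \eqref{SDE followers} for a frozen input $z$ and then applying the bounded operator $G$ of Proposition \ref{extension of domain of graphon operator}, we will show that $\varPhi$ is a contraction on $L_\boxtimes^2(0,T;\mathbb{R}^n)$ under an exponentially weighted norm
\begin{equation*}
\|z\|_\beta^2:=\mathbb{E}^\boxtimes\Big[\int_0^T e^{-\beta t}|z_t|^2\mathrm{d}t\Big],
\end{equation*}
which is equivalent to the usual norm. Banach's fixed point theorem then gives a unique fixed point.

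\emph{Step 1 (stability of the SDE).} Fix $\alpha^f$ and take $z,z'$ with solutions $X,X'$ from Proposition \ref{wellposedness of SDE}. The difference $\delta X=X-X'$ solves a linear SDE on the Fubini extension with zero initial value, drift $A^f\delta X+C^f\delta z$ and diffusion $D^f\delta X+F^f\delta z$. Apply Itô's formula to $e^{-\beta t}|\delta X_t|^2$, take $\mathbb{E}^\boxtimes$ (the stochastic integral is a true martingale because $X,X'\in\mathbb{S}^2_\boxtimes$), and use (A1) with Young's inequality. This gives
\begin{equation*}
e^{-\beta t}\mathbb{E}^\boxtimes|\delta X_t|^2+(\beta-K)\int_0^t e^{-\beta s}\mathbb{E}^\boxtimes|\delta X_s|^2\mathrm{d}s\le K\int_0^t e^{-\beta s}\mathbb{E}^\boxtimes|\delta z_s|^2\mathrm{d}s,
\end{equation*}
where $K$ depends only on the $L^\infty$ bounds of $A^f,C^f,D^f,F^f$. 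Hence $\|\delta X\|_\beta^2\le \frac{K}{\beta-K}\|\delta z\|_\beta^2$.

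\emph{Step 2 (graphon operator).} Apply Proposition \ref{extension of domain of graphon operator} for each $t$ and integrate in time to get $\|G\delta X\|_\beta\le \|G\|_\infty\|\delta X\|_\beta\le\|\delta X\|_\beta$. Choosing $\beta$ large makes $\varPhi$ a strict contraction, which yields existence and uniqueness of the fixed point. One point needs checking here: $GX$ must lie in $L^2_\boxtimes$, i.e.\ be adapted for each $u$. I expect $GX$ to be in fact deterministic (see below), and then adaptedness is trivial.

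\emph{Step 3 (restriction to $\bar L^2_\boxtimes$).} This is the main obstacle. We must show that $\varPhi$ maps $\bar L_\boxtimes^2(0,T;\mathbb{R}^n)$ into itself and that this subspace is closed. Closedness is clear: an $L^2$ limit of deterministic functions is a.e.\ deterministic. For invariance, take $z$ deterministic. The solution $X^u$ is then driven only by $(x_0^u,W^{f,u})$ and the deterministic data $\alpha^{f,u}$... except that $\alpha^f$ is general. So we first need the family $\{X^u\}$ to be essentially pairwise independent. Once that holds, the exact law of large numbers (Proposition \ref{exact law of large numbers}), applied componentwise to $G(u,\cdot)X^{\cdot}_t$ (use step approximations of $G$ and then pass to the limit), gives $GX_t^u=\int_I G(u,v)\mathbb{E}[X_t^v]\lambda(\mathrm{d}v)$ a.s. This is deterministic.

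The independence requirement holds when $\alpha^{f,u}$ is $\mathcal{F}^u$-adapted, since then $X^u$ is a measurable functional of $(x_0^u,W^{f,u})$ and essential pairwise independence is inherited from Proposition \ref{existence of essentially pairwise independent BM}. I would make this explicit using strong uniqueness and a Yamada–Watanabe type measurable representation. Given invariance, uniqueness on the whole space forces the global fixed point to coincide with the one in $\bar L_\boxtimes^2$. Alternatively, the contraction can be run directly on the closed subspace.
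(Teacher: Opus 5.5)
Your proposal is correct and follows the paper's argument. An $L^2$ stability estimate for (\ref{SDE followers}), combined with $\|G\|_\infty\le 1$ from Proposition \ref{extension of domain of graphon operator}, gives a contraction: you use the weighted norm $\|\cdot\|_\beta$ to make $\varPhi$ itself contractive, while the paper shows an iterate $\varPhi^{(k)}$ is contractive. The exact law of large numbers then shows $\bar L^2_\boxtimes(0,T;\mathbb{R}^n)$ is invariant.

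Your hedge in Step 3 is unnecessary. By definition, $\alpha^f\in\mathcal{U}^f$ and every $z\in L^2_\boxtimes(0,T;\mathbb{R}^n)$ are $\{\mathcal{F}^u_t\}$-adapted, so $X^u_t$ is $\mathcal{F}^u_t$-measurable. Since $\mathcal{F}^u_T$ and $\mathcal{F}^v_T$ are generated, up to null sets, by $(x_0^u,W^u)$ and $(x_0^v,W^v)$, they are independent for a.e.\ $(u,v)$, which gives essential pairwise independence without any Yamada--Watanabe argument. The same reasoning shows $\varPhi$ maps all of $L^2_\boxtimes(0,T;\mathbb{R}^n)$ into $\bar L^2_\boxtimes(0,T;\mathbb{R}^n)$, not only deterministic inputs. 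That settles the adaptedness concern you raised in Step 2 and makes the second assertion immediate.
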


\begin{proof}
	Let $z^1,z^2\in L_\boxtimes^2(0,T;\mathbb{R}^n)$, and denote by $X^1,X^2$ the solutions to equation (\ref{SDE followers}) with parameters $(\alpha^f,z^1)$ and $(\alpha^f,z^2)$, respectively. Define $\hat{z}:=z^1-z^2$ and $\hat{X}:=X^1-X^2$. Then $\hat{X}$ satisfies
	\begin{equation*}
		\begin{cases}
			\mathrm{d}\hat{X}_t^u=\big(A_t^f\hat{X}_t+C_t^f\hat{z}_t)\mathrm{d}t+(D_t^f\hat{X}_t+F_t^f\hat{z}_t\big)\mathrm{d}W_t^{f,u},\quad u\in I,\\
			\hat{X}^u_0=0,\quad u\in I.
		\end{cases}
	\end{equation*}
	By standard SDE estimates, there exists a constant $K>0$ such that for any $t\in[0,T]$,
	\begin{equation*}
		\mathbb{E}^\boxtimes\Big[ \sup_{0 \le s\le t}|\hat{X}_s|^2 \Big]\le K\mathbb{E}^\boxtimes\Big[ \int_0^t |\hat{z}_s|^2\mathrm{d}s \Big].
	\end{equation*}
	By Proposition \ref{extension of domain of graphon operator}, we have
	\begin{multline*}
		\sup_{0\le s\le t}\mathbb{E}^\boxtimes\big[|\varPhi(z^1)_s-\varPhi(z^2)_s|^2\big]\le \mathbb{E}^\boxtimes\Big[ \sup_{0\le s\le t}|\varPhi(z^1)_s-\varPhi(z^2)_s|^2 \Big]\\
		=\mathbb{E}^\boxtimes\Big[ \sup_{0\le s\le t}|G(X^1_s-X^2_s)|^2 \Big]\le \mathbb{E}^\boxtimes\Big[ \sup_{0\le s\le t}|X^1_s-X^2_s|^2 \Big]\le K\int_0^t \mathbb{E}^\boxtimes\big[|z_s^1-z_s^2|^2\big]\mathrm{d}s.
	\end{multline*}
	Proceeding by mathematical induction, let $\varPhi^{(k)}$ denote the $k$-th iterate of $\varPhi$. Then for any $t>0$ and $k\ge 1$,
	\begin{equation*}
		\sup_{0\le s\le t}\mathbb{E}^\boxtimes\big[|\varPhi^{(k)}(z^1)_s-\varPhi^{(k)}(z^2)_s|^2\big]\le K^{k+1}\frac{t^k}{k!}\int_0^t \mathbb{E}^\boxtimes\big[|z_s^1-z_s^2|^2\big]\mathrm{d}s.
	\end{equation*}
	For sufficiently large $k$, $\varPhi^{(k)}$ becomes a contraction mapping. Hence, $\varPhi$ itself has a unique fixed point.
	
	When $z\in \bar{L}_{\boxtimes}^2(0,T;\mathbb{R}^n)$, the solution $X$ to equation (\ref{SDE followers}) is essentially pairwise independent. By Proposition \ref{exact law of large numbers},
	\begin{equation*}
		\varPhi(z)^u_t=\int_I G(u,v)X_t^v\lambda(\mathrm{d}v)=\int_I G(u,v)\mathbb{E}[X_t^v]\lambda(\mathrm{d}v),
	\end{equation*}
	which implies $\varPhi(z)\in\bar{L}_{\boxtimes}^2(0,T;\mathbb{R}^n)$. As in the previous argument, the restriction of $\varPhi$ to $\bar{L}_\boxtimes^2(0,T;\mathbb{R}^n)$ is a contraction and thus has a unique fixed point.
\end{proof}

\begin{thm}\label{existence and uniqueness of solutions of state equation}
	Assume that assumption (A1) holds. For any $\alpha^f\in\mathcal{U}^f$ and $\alpha^l\in\mathcal{U}^l$, (\ref{state}) admits a unique solution $(X^f,X^l)\in \mathbb{S}_\boxtimes^2(\mathbb{R}^n)\times L_{\mathcal{F}^l}^2(\varOmega;C(0,T;\mathbb{R}^n))$. Moreover, the graphon aggregation satisfies $GX^f\in\bar{L}_{\boxtimes}^2(0,T;\mathbb{R}^n)$, and $M^f$ is deterministic.
\end{thm}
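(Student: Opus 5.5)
The plan is to decouple the system (\ref{state}): first solve the followers' equation on its own, then treat the leader's equation as a classical SDE driven by a known deterministic input. The followers' dynamics do not depend on $X^l$ or $\alpha^l$, so this order is legitimate.

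\emph{Step 1: the followers.} Fix $\alpha^f\in\mathcal{U}^f$. By Proposition \ref{fixed point}, the map $\varPhi$ has a unique fixed point $z^*\in L^2_\boxtimes(0,T;\mathbb{R}^n)$. Let $X^f$ be the solution of (\ref{SDE followers}) with parameters $(\alpha^f,z^*)$ given by Proposition \ref{wellposedness of SDE}. Then $GX^f=\varPhi(z^*)=z^*$, so $X^f$ solves the followers' part of (\ref{state}) and lies in $\mathbb{S}^2_\boxtimes$.

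Uniqueness works in reverse. If $\tilde X^f$ is another solution, set $\tilde z:=G\tilde X^f$. By Proposition \ref{extension of domain of graphon operator}, $\tilde z\in L^2_\boxtimes$. Since $\tilde X^f$ solves (\ref{SDE followers}) with parameter $\tilde z$, we get $\varPhi(\tilde z)=\tilde z$. The fixed point is unique, so $\tilde z=z^*$, and uniqueness in Proposition \ref{wellposedness of SDE} then gives $\tilde X^f=X^f$.

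\emph{Step 2: the aggregates are deterministic.} By the second part of Proposition \ref{fixed point}, $\varPhi$ restricted to $\bar L^2_\boxtimes$ also has a unique fixed point. That fixed point is also a fixed point in the full space, so it must equal $z^*$. Hence $GX^f\in\bar L^2_\boxtimes(0,T;\mathbb{R}^n)$.

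For $M^f$, I would argue as in the proof of Proposition \ref{fixed point}. With $z^*$ deterministic in $\omega$, each $X^{f,u}$ is driven only by $(x_0^{f,u},W^{f,u})$. Concretely, $\alpha^{f,u}$ is $\mathcal{F}^u$-adapted and $X^{f,u}$ is a measurable functional of $(x_0^{f,u},W^{f,u},\alpha^{f,u})$. This family is essentially pairwise independent, hence essentially pairwise uncorrelated.

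Applying Proposition \ref{exact law of large numbers} with $A=I$, componentwise and for each $t$, gives
\[
M^f_t=\int_I X^{f,u}_t\lambda(\mathrm{d}u)=\int_I\mathbb{E}[X^{f,u}_t]\lambda(\mathrm{d}u),\quad P\text{-a.s.},
\]
which is deterministic. Taking continuous versions in $t$ gives the identity for all $t$ simultaneously. Moreover $\int_0^T|M^f_t|^2\,\mathrm{d}t\le\mathbb{E}^\boxtimes\int_0^T|X^f_t|^2\,\mathrm{d}t<\infty$ by Jensen's inequality.

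\emph{Step 3: the leader.} With $M^f\in L^2(0,T;\mathbb{R}^{n_1})$ deterministic, the leader's equation becomes a linear SDE driven by $W^l$. Its coefficients are bounded by (A1), and its inhomogeneous terms are $B^l\alpha^l+C^lM^f+b^l$ and $E^l\alpha^l+F^lM^f+\sigma^l$, both in $L^2_{\mathcal{F}^l}$. Classical linear SDE theory (Chapter 1 of \cite{Yong-Zhou-99}) then gives a unique solution $X^l\in L^2_{\mathcal{F}^l}(\varOmega;C(0,T;\mathbb{R}^n))$. Uniqueness of the whole pair $(X^f,X^l)$ follows from the uniqueness in Steps 1 and 3.

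\emph{Main obstacle.} The delicate point is Step 2: justifying essential pairwise independence of $\{X^{f,u}\}$. This requires the controls $\alpha^{f,u}$ to be adapted to the individual filtrations $\mathcal{F}^u$, which is built into $L^2_\boxtimes$. It also requires that the solution map $(x_0^{f,u},W^{f,u},\alpha^{f,u})\mapsto X^{f,u}$ preserve independence, which holds via strong-solution functionals. Measurability of $u\mapsto\mathbb{E}[X^{f,u}_t]$, and the interplay between the two notions of fixed point, should then follow from the Fubini-extension properties.
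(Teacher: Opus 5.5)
Your proposal is correct and follows the same route as the paper. Existence and uniqueness of the followers' state come from Propositions~\ref{wellposedness of SDE} and~\ref{fixed point}; $GX^f$ is deterministic because the fixed point of $\varPhi$ restricted to $\bar{L}^2_\boxtimes(0,T;\mathbb{R}^n)$ coincides with the global one; $M^f$ is handled by Proposition~\ref{exact law of large numbers}; and the leader's equation is solved by classical SDE theory once $M^f$ is known to be deterministic. You also make explicit several points the paper's three-line proof leaves implicit: the reverse uniqueness argument, why the two fixed points coincide, the identity holding for all $t$ simultaneously, and the $L^2$ bound on $M^f$.
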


\begin{proof}
	The existence and uniqueness of the solution follow from Proposition \ref{wellposedness of SDE}, Proposition \ref{fixed point}, and classical SDE theory. By Proposition \ref{fixed point}, $X^f$ is essentially pairwise independent, and $GX^f\in\bar{L}_\boxtimes^2(0,T;\mathbb{R}^n)$. Applying Proposition \ref{exact law of large numbers} yields
	\begin{equation*}
		M_t^f=\int_I X_t^u\lambda(\mathrm{d}u)=\int_I \mathbb{E}[X_t^u]\lambda(\mathrm{d}u).\qedhere
	\end{equation*}
\end{proof} 

	\section{Solving Stackelberg games with a continuum of followers}
	
	\subsection{The followers' problem}

\begin{dfn}\label{Nash equilibrium of the followers' game}
	For a given $\alpha^l\in\mathcal{U}^l$, we say that $\hat{\alpha}^f\in\mathcal{U}^f$ is a Nash equilibrium of the followers' problem if for every $u\in I$ and every $\beta^u\in L_{\mathcal{F}^u}^2(0,T;\mathbb{R}^{m_1})$, we have
	\begin{equation*}
		J^{f,u}(\hat{\alpha}^{f,u};\hat{\alpha}^{f,-u},\alpha^l)\le J^{f,u}(\beta^u;\hat{\alpha}^{f,-u},\alpha^l).
	\end{equation*}
\end{dfn}

By Theorem \ref{existence and uniqueness of solutions of state equation}, for any $\alpha^l\in\mathcal{U}^l$ and any $u\in I$, $X^{f,u}$ is independent of $X^l$, and $GX^{f,u}$ is deterministic. Consequently, the cost functional (\ref{cost of followers}) can be rewritten as
\begin{multline}\label{cost of followers rewritten as}
	J^{f,u}(\alpha^{f,u};\alpha^{f,-u},\alpha^l)=\frac{1}{2}\mathbb{E}\biggl[ \int_0^T \Big\{V_t^{f,u\top}Q_t^{f}V_t^{f,u}+\mathrm{tr}\big\{Q_t^{f,33}\varSigma_t\big\}+\alpha_t^{f,u\top}R_t^{f}\alpha_t^{f,u}\Big\}\mathrm{d}t\\
	+V_T^{f,u\top}G^{f}V_T^{f,u}+\mathrm{tr}\big\{G^{f,33}\varSigma_T\big\} \biggr],
\end{multline}
where $\varSigma_t$ denotes the covariance matrix of $X_t^l$, and
\begin{equation}\label{V f u}
	V_t^{f,u}:=\begin{pmatrix}
		X_t^{f,u} \\ GX_t^{f,u} \\ \mathbb{E}[X_t^l]
	\end{pmatrix}.
\end{equation}
Note that $\varSigma_t$ is deterministic. Thus, in (\ref{cost of followers rewritten as}), the influence of $X^l$ is replaced by its expectation $\mathbb{E}[X^l]$.

In the followers' problem, the interaction among followers is defined through integrals. Since altering the value at a single point does not affect the integral, each follower can treat the graphon aggregation $GX^{f,u}$ and the leader's state $X^l$ as external variables when deviating individually. Therefore, for each follower $u$, the problem reduces to a stochastic optimal control one, which can be approached via the maximum principle.

For each follower $u$, define the Hamiltonian function as
\begin{multline}\label{Hamiltiniaon function followers}
	H^{f,u}(t,x,\alpha,p,q):=p^\top\big(A_t^{f}x+B_t^{f}\alpha+C_t^{f}GX_t^{f,u}+b_t^{f,u}\big)+q^\top \big(D_t^{f}x+E_t^{f}\alpha\\
	+F_t^{f}GX_t^{f,u}+\sigma_t^{f,u}\big)+\frac{1}{2}\big( v_t^{f,u\top}Q_t^{f}v_t^{f,u}+\mathrm{tr}\{Q_t^{f,33}\varSigma_t\}+\alpha^\top R_t^{f}\alpha \big),
\end{multline}
where
\begin{equation}\label{v f u}
	v_t^{f,u}:=\begin{pmatrix}
		x \\ GX_t^{f,u} \\ \mathbb{E}[X_t^l]
	\end{pmatrix}.
\end{equation}

\begin{Ass}[A2]
	$E^f\in C(0,T;\mathbb{R}^{n_1\times m_1})$, $R^f\in C(0,T;\mathbb{S}^{m_1}_+)$, $E^l\in C(0,T;\mathbb{R}^{n_2\times m_2})$, and $R^l\in C(0,T;\mathbb{S}^{m_2}_+)$.
\end{Ass}

Under these conditions, for each $u\in I$, the follower's problem is a standard stochastic LQ optimal control problem. By stochastic control theory (see Chapter 6 of \cite{Yong-Zhou-99}), we have the following result.

\begin{thm}\label{maximum principle followers problem}
	Assume that assumptions (A1) and (A2) hold. For a fixed $\alpha^l\in\mathcal{U}^l$ and a given $GX^f\in \bar{L}_\boxtimes^2(0,T;\mathbb{R}^{n_1})$, the followers' problem admits a unique Nash equilibrium $\hat{\alpha}^f$. Denote the corresponding state of the followers by $\hat{X}^f$, then there exist unique adjoint processes $(p^f,q^f)$ such that $(\hat{X}^f,p^f,q^f,\hat{\alpha}^f)$ satisfies the Hamiltonian system
	\begin{equation}\label{Hamiltiniaon system followers}
		\begin{cases}
			\mathrm{d}\hat{X}_t^{f,u}=\big(A_t^{f}\hat{X}_t^{f,u}+B_t^{f}\hat{\alpha}_t^{f,u}+C_t^{f}GX_t^{f,u}+b_t^{f}\big)\mathrm{d}t\\
			\qquad\qquad +\big(D_t^{f}\hat{X}_t^{f,u}+E_t^{f}\hat{\alpha}_t^{f,u}+F_t^{f}GX_t^{f,u}+\sigma_t^{f}\big)\mathrm{d}W_t^{f,u},\quad u\in I,\\
			\mathrm{d}p_t^{f,u}=-\big(A_t^{f\top}p_t^{f,u}+D_t^{f\top}q_t^{f,u}+Q_t^{f,11}\hat{X}_t^{f,u}+Q_t^{f,12}GX_t^{f,u}+Q^{f,13}_t\mathbb{E}[X_t^l]\big)\mathrm{d}t\\
			\qquad\qquad+q_t^{f,u}\mathrm{d}W_t^{f,u},\quad u\in I,\\
			\hat{X}_0^{f,u}=x_0^{f,u},\quad p_T^{f,u}=G^{f,11}\hat{X}_T^{f,u}+G^{f,12}GX_T^{f,u}+G^{f,13}\mathbb{E}[X_t^l],\quad u\in I,
		\end{cases}
	\end{equation}
	and the Nash equilibrium is given by
	\begin{equation}\label{Nash equilibrium followers problem}
		R_t^f\hat{\alpha}_t^{f,u}+B_t^{f\top}p_t^{f,u}+E_t^{f\top}q_t^{f,u}=0,\quad u\in I.
	\end{equation}
\end{thm}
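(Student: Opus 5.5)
The plan is to fix $\alpha^l\in\mathcal{U}^l$ and the deterministic aggregation $GX^f\in\bar{L}_\boxtimes^2(0,T;\mathbb{R}^{n_1})$ (deterministic by Theorem \ref{existence and uniqueness of solutions of state equation} and Proposition \ref{exact law of large numbers}), and then decouple the game into a family of independent single-agent problems indexed by $u\in I$. As already observed after (\ref{cost of followers rewritten as}), a single follower's deviation changes its control only on a $\lambda$-null set, so it changes neither $GX^f$, $M^f$, nor $X^l$. Hence, for each $u$, the quantities $GX^{f,u}$, $\mathbb{E}[X^l]$ and $\varSigma$ enter the state equation and the cost (\ref{cost of followers rewritten as}) only as deterministic inhomogeneous terms. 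The problem of follower $u$ is therefore a standard stochastic LQ control problem on $(\varOmega,\mathcal{F},P)$ with filtration $\{\mathcal{F}^u_t\}$, state $X^{f,u}$ and control $\beta^u\in L^2_{\mathcal{F}^u}(0,T;\mathbb{R}^{m_1})$.

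For each fixed $u$ I would proceed in three steps.

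First, I would show that the cost functional is convex and coercive in $\beta^u$. Since $Q^f\gg0$ and $G^f\ge0$ are positive semidefinite by (A1), the map $\beta\mapsto J^{f,u}$ is a convex quadratic functional. For uniqueness and existence of a minimizer I need strict convexity, which requires $R^f$ to be uniformly positive definite. I would read (A2) together with (A1) as guaranteeing this, i.e. $R^f\gg\delta>0$; a continuous positive definite matrix function on the compact interval $[0,T]$ is uniformly positive definite. Then $J^{f,u}$ is strictly convex, continuous and coercive on a Hilbert space, so it has a unique minimizer $\hat{\alpha}^{f,u}$.

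Second, I would derive the necessary and sufficient optimality conditions. I would introduce the adjoint BSDE in (\ref{Hamiltiniaon system followers}) driven by $W^{f,u}$. It is linear with bounded coefficients and square-integrable terminal value, so it has a unique solution $(p^{f,u},q^{f,u})$. A standard duality computation, applying Itô's formula to $p^{f,u\top}\delta X$ for a perturbation $\delta X$ driven by $\delta\beta$, gives the Gâteaux derivative of $J^{f,u}$ as $\mathbb{E}\int_0^T(R^f\hat\alpha+B^{f\top}p+E^{f\top}q)^\top\delta\beta\,\mathrm{d}t$. Setting it to zero yields (\ref{Nash equilibrium followers problem}); by convexity the condition is also sufficient (Chapter 6 of \cite{Yong-Zhou-99}). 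Because $R^f$ is invertible, the optimal control is $\hat\alpha=-R^{f-1}(B^{f\top}p+E^{f\top}q)$, and substituting it back produces the coupled FBSDE, whose unique solvability follows from the unique solvability of the LQ problem.

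Third, and this is where I expect the main obstacle, I would assemble the family $\{\hat\alpha^{f,u}\}_{u\in I}$ into a single element of $\mathcal{U}^f=L^2_\boxtimes$. This needs joint $\mathcal{F}\boxtimes\mathcal{I}$-measurability and square-integrability in $u$. I would handle it by solving the FBSDE once on the rich Fubini extension, viewing $W^f$ as a Brownian motion there, as in the proof of Proposition \ref{wellposedness of SDE}. The data are $\mathcal{F}\boxtimes\mathcal{I}$-measurable, so the solution is too, and its $u$-sections coincide with the sectionwise solutions by uniqueness. The $L^2_\boxtimes$ bound then follows from the uniform-in-$u$ LQ estimates (the coefficients do not depend on $u$) together with $\mathbb{E}^\boxtimes|x_0^f|^2<\infty$. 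Finally, since each $\hat\alpha^{f,u}$ is optimal for follower $u$ against the others, $\hat\alpha^f$ is a Nash equilibrium in the sense of Definition \ref{Nash equilibrium of the followers' game}. Uniqueness of the equilibrium follows from sectionwise uniqueness of the minimizers.
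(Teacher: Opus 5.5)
Your proposal is correct and follows the paper's route: the paper's proof simply appeals to standard stochastic LQ theory (Yong--Zhou, Chapter 6) for each follower, with $GX^f$ and $\mathbb{E}[X^l]$ frozen, and your steps spell that appeal out. Those steps are strict convexity and coercivity from $R^f\gg\delta>0$ (the intended reading of (A1)--(A2), since the paper inverts $R^f$ throughout), the duality derivation of (\ref{Nash equilibrium followers problem}), and the measurable assembly on the Fubini extension, which the paper leaves implicit. One caveat: with $GX^f$ given exogenously you obtain the family of best responses, and calling it a Nash equilibrium in the sense of the paper's definition also needs the consistency condition $GX^f=G\hat{X}^f$, which the paper imposes separately in (\ref{aggregate equilibrium state}) rather than inside this theorem.
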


\begin{proof}
	This follows directly from standard results for stochastic LQ optimal control problems (see Theorem 4.2, Proposition 5.5, and Corollary 5.7 in Chapter 6 of \cite{Yong-Zhou-99}).
\end{proof}

If the equilibrium state $\hat{X}^f$ satisfies
\begin{equation}\label{aggregate equilibrium state}
	GX^f=G\hat{X}^f,\quad P\boxtimes\lambda\text{-}a.e.,
\end{equation}
then $\hat{\alpha}^f$ is a Nash equilibrium of the followers' problem under the leader's control $\alpha^l$.

Substituting (\ref{Nash equilibrium followers problem}) and (\ref{aggregate equilibrium state}) into the Hamiltonian system (\ref{Hamiltiniaon system followers}) yields
\begin{equation}\label{graphon-aggregated FBSDE followers}
\left\{
	\begin{aligned}
		\mathrm{d}\hat{X}_t^{f,u}&=\big[A_t^{f}\hat{X}_t^{f,u}-B_t^{f}(R_t^{f})^{-1}B_t^{f\top}p_t^{f,u}-B_t^{f}(R_t^{f})^{-1}E_t^{f\top}q_t^{f,u}\\
                                 &\qquad +C_t^{f}G\hat{X}_t^{f,u}+b_t^{f,u}\big]\mathrm{d}t +\big[D_t^{f}\hat{X}_t^{f,u}-E_t^{f}(R_t^{f})^{-1}B_t^{f\top}p_t^{f,u}\\
                                 &\qquad -E_t^{f}(R_t^{f})^{-1}E_t^{f\top}q_t^{f,u}+F_t^{f}G\hat{X}_t^{f,u}+\sigma_t^{f,u}\big]\mathrm{d}W_t^{f,u},\quad u\in I,\\
		      \mathrm{d}p_t^{f,u}&=-\big[A_t^{f\top}p_t^{f,u}+D_t^{f\top}q_t^{f,u}+Q_t^{f,11}\hat{X}_t^{f,u}+Q_t^{f,12}G\hat{X}_t^{f,u}\\
                                 &\qquad +Q_t^{f,13}\mathbb{E}[X_t^l])\mathrm{d}t+q_t^{f,u}\mathrm{d}W_t^{f,u},\quad u\in I,\\
		\hat{X}_0^{f,u}&=x_0^{f,u},\quad p_T^{f,u}=G^{f,11}\hat{X}_T^{f,u}+G^{f,12}G\hat{X}_T^{f,u}+G^{f,13}\mathbb{E}[X_T^l],\quad u\in I,
	\end{aligned}
\right.
\end{equation}
which is graphon-aggregated FBSDE. We give its solvability and uniqueness of (\ref{graphon-aggregated FBSDE followers}), under the following conditions, by a more general result Theorem \ref{existence and uniqueness of GMFLFBSDE} in Section 5.

\begin{Ass}[A3]
	There exists a constant $K>0$ such that
	\begin{gather*}
		Q^{f,11}\gg K,\quad \begin{pmatrix}
			B^f \\ E^f
		\end{pmatrix}(R^f)^{-1}\begin{pmatrix}
			B^f \\ E^f
		\end{pmatrix}^\top \gg K,\\
		(1+3||G||_\infty)\cdot\sup_{0\le t\le T}\big( ||Q_t^{f,12}||_\infty+||C_t^f||_{\infty}+||F_t^f||_\infty \big)<2K,\\
		\int_I x^{u\top}G^{f,12}Gx^{u}\lambda(\mathrm{d}u)\ge 0,\quad \forall x\in L_\mathcal{I}^2(\mathbb{R}^n).\label{eq33}
	\end{gather*}
\end{Ass}

\begin{thm}\label{solvability of graphon-aggregated FBSDE followers}
	If assumptions (A1), (A2), and (A3) hold, then the graphon-aggregated FBSDE (\ref{graphon-aggregated FBSDE followers}) admits a unique solution $(\hat{X}^f,p^{f},q^f)\in\mathbb{S}_\boxtimes^2(\mathbb{R}^n)\times\mathbb{S}_\boxtimes^2(\mathbb{R}^n)\times\mathbb{H}_\boxtimes^2(\mathbb{R}^n)$, satisfying
	\begin{multline}\label{estimate of graphon-aggregated FBSDE followers}
		\mathbb{E}^\boxtimes\biggl[\sup_{0\le t\le T}|\hat{X}_t^f|^2+\sup_{0\le t\le T}|p^f_t|^2+\int_0^T |q_t^f|^2\mathrm{d}t \biggr]\\
		\le K\mathbb{E}^\boxtimes\biggl[ \int_0^T \big\{ |b_t^f|^2+|\sigma_t^f|^2+|X_t^l|^2 \big\}\mathrm{d}t+|X_T^l|^2 \biggr].
	\end{multline}
\end{thm}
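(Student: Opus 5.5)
The approach is the continuation method (Hu--Peng, Peng--Wu), adapted to the graphon-aggregated setting. In the paper this theorem is obtained as a special case of Theorem \ref{existence and uniqueness of GMFLFBSDE}; here I sketch the argument directly for (\ref{graphon-aggregated FBSDE followers}).

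\textbf{Setup.} I write the system abstractly on the rich Fubini extension, treating $W^f$ as a single Brownian motion on $(\varOmega\times I,\mathcal{F}\boxtimes\mathcal{I},P\boxtimes\lambda)$. Put $\Theta=(x,p,q)$ and define
\begin{equation*}
\mathcal{A}(t,\Theta)=\big(-[A^{f\top}p+D^{f\top}q+Q^{f,11}x+Q^{f,12}Gx],\;A^fx-B^f(R^f)^{-1}(B^{f\top}p+E^{f\top}q)+C^fGx,\;D^fx-E^f(R^f)^{-1}(B^{f\top}p+E^{f\top}q)+F^fGx\big).
\end{equation*}
The deterministic quantity $\mathbb{E}[X^l]$, which is well defined by Theorem \ref{existence and uniqueness of solutions of state equation}, is treated as an inhomogeneous term together with $b^f$ and $\sigma^f$.

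\textbf{Key step 1: monotonicity.} For two processes $\Theta,\Theta'$, set $\hat\Theta=\Theta-\Theta'$. I claim
\begin{equation*}
\mathbb{E}^\boxtimes\langle \mathcal{A}(t,\Theta)-\mathcal{A}(t,\Theta'),\hat\Theta\rangle\le -\beta\,\mathbb{E}^\boxtimes\big[|\hat x|^2+|B^{f\top}\hat p+E^{f\top}\hat q|^2\big]
\end{equation*}
for some $\beta>0$. Expanding the inner product, the $A^f$ and $D^f$ terms cancel. What remains is:
\begin{itemize}
\item $-\hat x^\top Q^{f,11}\hat x$, which is bounded above by $-K|\hat x|^2$;
\item $-(B^{f\top}\hat p+E^{f\top}\hat q)^\top (R^f)^{-1}(B^{f\top}\hat p+E^{f\top}\hat q)$, which is negative definite by the second condition in (A3);
\item the cross terms $-\hat x^\top Q^{f,12}G\hat x+\hat p^\top C^fG\hat x+\hat q^\top F^fG\hat x$.
\end{itemize}
To control the cross terms I use Young's inequality together with Proposition \ref{extension of domain of graphon operator}, i.e. $\mathbb{E}^\boxtimes|G\hat x|^2\le\|G\|_\infty^2\mathbb{E}^\boxtimes|\hat x|^2$. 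The smallness condition $(1+3\|G\|_\infty)\sup(\|Q^{f,12}\|+\|C^f\|+\|F^f\|)<2K$ is exactly what allows these terms to be absorbed by the two coercive terms.

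This is the step I expect to be the main obstacle. The cross terms involve $\hat p$ and $\hat q$ themselves, while coercivity is only available for $B^{f\top}\hat p+E^{f\top}\hat q$. I would therefore first use the positivity of $(B^f;E^f)(R^f)^{-1}(B^f;E^f)^\top\gg K$, read as coercivity on $(\hat p,\hat q)$, to obtain control of $|\hat p|^2+|\hat q|^2$. The Young-inequality constants then produce the factor $1+3\|G\|_\infty$.

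For the terminal part, $\mathbb{E}^\boxtimes\langle G^{f,11}\hat x+G^{f,12}G\hat x,\hat x\rangle\ge0$ follows from $G^f\ge0$ and the last condition in (A3), after applying the Fubini property (Definition \ref{def Fubini extension}).

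\textbf{Key step 2: continuation.} For $\gamma\in[0,1]$, consider the family
\begin{equation*}
\mathrm{d}x=[\gamma\mathcal{A}_2+(1-\gamma)(-\beta p)+\phi]\,\mathrm{d}t+[\gamma\mathcal{A}_3+(1-\gamma)(-\beta q)+\psi]\,\mathrm{d}W,
\end{equation*}
with backward drift $\gamma\mathcal{A}_1-(1-\gamma)\beta x+\rho$, initial condition $x_0$, and terminal condition $p_T=\gamma(G^{f,11}x_T+G^{f,12}Gx_T)+(1-\gamma)x_T+\xi$.
\begin{itemize}
\item At $\gamma=0$ the system decouples into a standard solvable linear FBSDE on the Fubini extension.
\item Applying Itô's formula to $\langle\hat x,\hat p\rangle$ and using Step 1 gives an a priori estimate that is uniform in $\gamma$. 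Combined with standard SDE and BSDE estimates, this bounds $\mathbb{E}^\boxtimes[\sup|\hat x|^2+\sup|\hat p|^2+\int|\hat q|^2]$ by the size of the perturbation.
\item Consequently there is a $\delta_0>0$, independent of $\gamma$, such that solvability at $\gamma$ implies solvability on $[\gamma,\gamma+\delta_0]$, by a contraction argument.
\end{itemize}
Finitely many steps reach $\gamma=1$, which gives existence. Uniqueness follows from the same monotonicity identity.

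\textbf{Key step 3: the estimate.} The bound (\ref{estimate of graphon-aggregated FBSDE followers}) is the a priori estimate of Step 2 applied with $\Theta'=0$, with the inhomogeneous terms $b^f$, $\sigma^f$, $Q^{f,13}\mathbb{E}[X^l]$ and $G^{f,13}\mathbb{E}[X^l_T]$. Jensen's inequality gives $|\mathbb{E}X^l|^2\le\mathbb{E}|X^l|^2$, which yields the right-hand side as stated.

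Finally, since the solution is essentially pairwise independent in $u$, Proposition \ref{exact law of large numbers} shows that $G\hat X^f$ is deterministic. This makes the solution consistent with the reduction carried out in Theorem \ref{maximum principle followers problem}.
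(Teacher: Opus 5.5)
Your proposal is correct and is essentially the paper's argument. The paper carries out the same computation to verify (S1)--(S2): the $A^f,D^f$ terms cancel, the second condition of (A3) is read as coercivity in $(p,q)$, Young's inequality together with $\mathbb{E}^\boxtimes|G\hat x|^2\le\|G\|_\infty^2\,\mathbb{E}^\boxtimes|\hat x|^2$ produces the factor $1+3\|G\|_\infty$, and $G^{f,11}\ge0$ plus the last condition of (A3) handle time $T$. It then invokes the continuation result, Theorem~\ref{existence and uniqueness of GMFLFBSDE}, whose $\alpha=0$ starting system is exactly your $\gamma=0$ system, rather than rerunning the continuation for this particular FBSDE.

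One caveat, shared with the paper's own derivation, concerns your claim that Step 3 yields the right-hand side as stated. Your a priori bound compares with $\Theta'=0$, so the difference starts at $x_0^f$, and the bound therefore also carries a term $K\,\mathbb{E}^\boxtimes|x_0^f|^2$ on the right-hand side. The displayed estimate omits this term and cannot hold without it (take $b^f=\sigma^f=0$, $X^l\equiv0$, $x_0^f\neq0$).
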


\begin{proof}
	We will discuss the solvability of linear graphon-aggregated FBSDEs later (Theorem \ref{existence and uniqueness of GMFLFBSDE}). Here we directly apply that result, provided we verify the assumptions (S1) and (S2) in that theorem.
	
	For any $t\in[0,T]$ and $x,y,z\in\mathbb{R}^n$, by the first three inequalities in (A3), we have
	\begin{multline}\label{monotonicity condition-followers}
		\begin{pmatrix}
			x \\ y \\ z
		\end{pmatrix}^\top\begin{pmatrix}
			-Q_t^{f,11} & -A_t^{f\top} & -D_t^{f\top} \\
			A_t^f & -B_t^f(R_t^f)^{-1}B_t^{f\top} & -B_t^f(R_t^f)^{-1}E_t^{f\top} \\
			D_t^f & -E_t^f(R_t^f)^{-1}B_t^{f\top} & -E_t^f(R_t^f)^{-1}E_t^{f\top}
		\end{pmatrix}\begin{pmatrix}
			x \\ y \\ z
		\end{pmatrix}\\
		=-x^\top Q_t^{f,11}x-\begin{pmatrix}
			y \\ z
		\end{pmatrix}^\top\begin{pmatrix}
			B^f_t \\ E_t^f
		\end{pmatrix}(R_t^f)^{-1}\begin{pmatrix}
			B^f_t \\ E_t^f
		\end{pmatrix}^\top\begin{pmatrix}
			y \\ z
		\end{pmatrix}\le -K(|x|^2+|y|^2+|z|^2),
	\end{multline}
	and there exists a constant $0<K_2<K$ such that
	\begin{equation*}
		(1+3||G||_\infty)\cdot\sup_{0\le t\le T}\big( ||Q_t^{f,12}||_\infty+||C_t^f||_{\infty}+||F_t^f||_\infty \big)\le 2K_2<2K.
	\end{equation*}
	Moreover, for any $t\in[0,T]$ and $x,y,z\in L_{\mathcal{I}}^2(\mathbb{R}^n)$,
	\begin{equation}\label{inequality}
        \begin{aligned}
		&\int_I \big\{ -x^{u\top} Q_t^{f,12}Gx^u+y^{u\top} C_t^f Gx^u+z^{u\top} F_t^f Gx^u \big\}\lambda(\mathrm{d}u)\\
		&\le \sup_{0\le t\le T}\big( ||Q_t^{f,12}||_\infty+||C_t^f||_\infty+||F_t^f||_\infty \big)\\
        &\qquad \cdot\int_I\big\{ |x^{u\top} Gx^u|+|y^{u\top} Gx^u|+|z^{u\top} Gx^u| \}\lambda(\mathrm{d}u)\\
		&\le \frac{1}{2}\sup_{0\le t\le T}\big( ||Q_t^{f,12}||_\infty+||C_t^f||_\infty+||F_t^f||_\infty \big)\\
        &\qquad \cdot\int_I \big\{ |x^u|^2+|y^u|^2+|z^u|^2+3|Gx^u|^2 \big\}\lambda(\mathrm{d}u)\\
		&\le \frac{1+3||G||_\infty}{2}\sup_{0\le t\le T}\big( ||Q_t^{f,12}||_\infty+||C_t^f||_\infty+||F_t^f||_\infty \big)\\
        &\qquad \cdot\int_I \big\{ |x^u|^2+|y^u|^2+|z^u|^2 \big\}\lambda(\mathrm{d}u)\\
		&\le K_2\int_I\big\{ |x^u|^2+|y^u|^2+|z^u|^2 \big\}\lambda(\mathrm{d}u).
	    \end{aligned}
   \end{equation}
	From (\ref{monotonicity condition-followers}), (\ref{inequality}), and the last inequality in (A3), together with assumptions (A1) and (A2), it follows that conditions (S1) and (S2) hold.
\end{proof}

\begin{thm}\label{thm06}
	Under assumptions (A1), (A2), and (A3), for any $\alpha^l\in\mathcal{U}^l$, the followers' problem admits a unique Nash equilibrium.
\end{thm}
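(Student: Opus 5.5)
Fix $\alpha^l\in\mathcal{U}^l$. Because the leader's state depends on the followers only through $M^f$, which Theorem \ref{existence and uniqueness of solutions of state equation} shows is deterministic, the followers see $X^l$ only through $\mathbb{E}[X^l]$ and the deterministic covariance $\varSigma$, as in (\ref{cost of followers rewritten as}). I will treat the leader's state as given data while the followers play, and postpone the leader–follower coupling to the leader's problem. The proof then has two directions: a consistent solution of the graphon-aggregated FBSDE (\ref{graphon-aggregated FBSDE followers}) produces a Nash equilibrium, and any Nash equilibrium produces such a solution. Uniqueness of the FBSDE solution then gives uniqueness of the equilibrium.

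\emph{Existence.} By Theorem \ref{solvability of graphon-aggregated FBSDE followers}, (\ref{graphon-aggregated FBSDE followers}) has a unique solution $(\hat X^f,p^f,q^f)$. Define $\hat\alpha^f$ through (\ref{Nash equilibrium followers problem}), i.e. $\hat\alpha^{f,u}_t=-(R^f_t)^{-1}(B^{f\top}_tp^{f,u}_t+E^{f\top}_tq^{f,u}_t)$. The estimate there gives $\hat\alpha^f\in\mathcal{U}^f$; here I need $(R^f)^{-1}$ bounded, which follows from (A2) together with the invertibility implicit in writing (\ref{graphon-aggregated FBSDE followers}). Now set $GX^f:=G\hat X^f$.

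First check $G\hat X^f\in\bar L^2_\boxtimes$. The data $x_0^{f,u}$ and $W^{f,u}$ are essentially pairwise independent, and for each $u$ the FBSDE for follower $u$ is driven only by these data once the aggregate is fixed. So the solution is essentially pairwise independent. Proposition \ref{exact law of large numbers} then shows that $G\hat X^f$ is a.s.\ deterministic, equal to $\int_I G(u,v)\mathbb{E}[\hat X^v]\lambda(\mathrm{d}v)$. This is the same argument as in the proof of Proposition \ref{fixed point}.

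With the aggregate frozen, each follower $u$ faces a standard LQ problem with convex cost, since $Q^f$ and $G^f$ are positive semidefinite and $R^f>0$ is invariant under the deviation. The Hamiltonian system (\ref{Hamiltiniaon system followers}) together with the stationarity condition (\ref{Nash equilibrium followers problem}) is then sufficient for optimality of $\hat\alpha^{f,u}$ (Theorem \ref{maximum principle followers problem}). A unilateral deviation by a single follower $u$ changes the integral $G\hat X^f$ on a null set only, so the aggregate is unaffected. The consistency condition (\ref{aggregate equilibrium state}) holds by construction, so $\hat\alpha^f$ is a Nash equilibrium in the sense of Definition \ref{Nash equilibrium of the followers' game}.

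\emph{Uniqueness.} Let $\tilde\alpha^f$ be any Nash equilibrium with state $\tilde X^f$. By Theorem \ref{existence and uniqueness of solutions of state equation}, $G\tilde X^f\in\bar L^2_\boxtimes$. Given this aggregate, each $\tilde\alpha^{f,u}$ is optimal for follower $u$'s LQ problem. Theorem \ref{maximum principle followers problem} then supplies adjoints $(\tilde p^f,\tilde q^f)$ satisfying (\ref{Hamiltiniaon system followers}) and (\ref{Nash equilibrium followers problem}) with $GX^f=G\tilde X^f$. Substituting these shows that $(\tilde X^f,\tilde p^f,\tilde q^f)$ solves (\ref{graphon-aggregated FBSDE followers}). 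By the uniqueness part of Theorem \ref{solvability of graphon-aggregated FBSDE followers}, it coincides with $(\hat X^f,p^f,q^f)$, and hence $\tilde\alpha^f=\hat\alpha^f$ by (\ref{Nash equilibrium followers problem}).

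The main obstacle is a measurability point in both directions. The per-follower optimality holds for each $u$ separately, and I must check that the resulting families of adjoints $(p^{f,u},q^{f,u})_{u\in I}$ are jointly $\mathcal{F}\boxtimes\mathcal{I}$-measurable, so that they lie in $\mathbb{S}^2_\boxtimes\times\mathbb{H}^2_\boxtimes$. I would handle this by viewing each BSDE, with the aggregate frozen and deterministic, as a single BSDE on the rich Fubini extension driven by $W^f$, in the same way as the proof of Proposition \ref{wellposedness of SDE}. This yields a jointly measurable solution, which agrees with the per-$u$ solutions for a.e.\ $u$.
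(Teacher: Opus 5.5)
Your argument is the paper's own proof written out in full. The paper simply combines Theorem \ref{maximum principle followers problem}, optimality for each follower with the aggregates frozen, with Theorem \ref{solvability of graphon-aggregated FBSDE followers}, unique solvability of (\ref{graphon-aggregated FBSDE followers}). Your handling of $G\hat X^f$, of sufficiency and necessity of the maximum principle, and of measurability is fine.

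\textbf{The gap.} It lies in the reduction you announce at the outset, treating the leader's state as given data. The paper makes the same reduction silently: Theorem \ref{solvability of graphon-aggregated FBSDE followers} lists $X^l$ among the data in (\ref{estimate of graphon-aggregated FBSDE followers}). In Definition \ref{Nash equilibrium of the followers' game}, however, $X^l$ solves (\ref{state}) driven by the followers' own mean. So the input $\mathbb{E}[X^l]$ of (\ref{graphon-aggregated FBSDE followers}) is endogenous: $\frac{\mathrm{d}}{\mathrm{d}t}\mathbb{E}[X_t^l]=A_t^l\mathbb{E}[X_t^l]+B_t^l\mathbb{E}[\alpha_t^l]+C_t^lM_t^f+\mathbb{E}[b_t^l]$. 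The dependence through $F^l$ only enters $\varSigma$, an additive constant in (\ref{cost of followers rewritten as}), so only $C^l$ matters. This breaks both halves of your proof:
\begin{itemize}
\item Existence needs a second consistency condition besides (\ref{aggregate equilibrium state}). The $\mathbb{E}[X^l]$ you feed into the FBSDE must coincide with the one generated by $\hat M^f=\int_I\hat X^{f,u}\lambda(\mathrm{d}u)$. This is a fixed-point problem your proposal never solves.
\item In the uniqueness step, a second equilibrium $\tilde\alpha^f$ with a different mean induces a different $\mathbb{E}[\tilde X^l]$. Then $(\tilde X^f,\tilde p^f,\tilde q^f)$ solves (\ref{graphon-aggregated FBSDE followers}) with different data, and the uniqueness part of Theorem \ref{solvability of graphon-aggregated FBSDE followers} does not compare the two.
\end{itemize}
``Postponing'' the coupling to the leader's level, as the paper does in (\ref{leader's state equation}), does not help. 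The statement asserts a unique equilibrium for each fixed $\alpha^l$.

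\textbf{A counterexample.} This is not a technicality: under (A1)--(A3) alone the statement can fail. Take the following data.
\begin{itemize}
\item Followers: $n_1=1$, $m_1=2$, $B^f=(1,0)$, $E^f=(0,1)$, $R^f=I_2$, $Q^f=\mathrm{diag}(1,0,0)$, and $G^{f,11}=G^{f,13}=G^{f,33}=1$. All other blocks of $G^f$ and all other follower coefficients except $\sigma^f$ are zero, so (A3) holds with $K=1$.
\item Leader: $n_2=1$, $\alpha^l=0$, $x_0^l=0$, and $\mathrm{d}X_t^l=C^lM_t^f\mathrm{d}t+\mathrm{d}W_t^l$ with a constant $C^l$.
\end{itemize}
Take expectations in the Hamiltonian system and integrate over $u$. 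Any equilibrium then has $\bar X_t:=\int_I\mathbb{E}[\hat X_t^{f,u}]\lambda(\mathrm{d}u)$ solving $\ddot{\bar X}=\bar X$, with $\bar X_0=\bar x_0:=\int_I\mathbb{E}[x_0^{f,u}]\lambda(\mathrm{d}u)$ and $-\dot{\bar X}_T=\bar X_T+C^l\int_0^T\bar X_t\mathrm{d}t$. Conversely, each solution of this system yields an equilibrium. To see this, solve the per-follower FBSDE with $\mathbb{E}[X_t^l]:=C^l\int_0^t\bar X_s\mathrm{d}s$; its mean is forced to equal $\bar X$.

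Write $\bar X_t=\bar x_0\cosh t+\beta\sinh t$. The terminal condition becomes $\beta\big(e^T+C^l(\cosh T-1)\big)=-\bar x_0\big(e^T+C^l\sinh T\big)$. For $C^l=-e^T/(\cosh T-1)$ the coefficient of $\beta$ vanishes, while the right-hand side equals $\bar x_0(e^T-1)/(\cosh T-1)$. So there is a continuum of equilibria if $\bar x_0=0$, and none if $\bar x_0\neq0$.

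\textbf{What is needed.} Some additional hypothesis is required. Any of the following would do:
\begin{itemize}
\item $C^l\equiv0$, or $Q^{f,13}\equiv0$ together with $G^{f,13}=0$; then your proof goes through as written.
\item A smallness or monotonicity condition on (\ref{graphon-aggregated FBSDE followers}) augmented by the ODE for $\mathbb{E}[X^l]$. The continuation method of Section 5 could then be applied to the augmented system.
\end{itemize}
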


\begin{proof}
	This follows directly from Theorems \ref{maximum principle followers problem} and \ref{solvability of graphon-aggregated FBSDE followers}.
\end{proof}

Assume the solution to the forward-backward system (\ref{graphon-aggregated FBSDE followers}) has the following form:
\begin{equation}\label{relation p and X}
	p_t^{f,u}=P_t^{f,u}\hat{X}_t^{f,u}+\hat{\varphi}_t^{f,u},\quad u\in I,
\end{equation}
where $P^{f}$ and $\hat{\varphi}^f$ are deterministic functions.

Substituting (\ref{relation p and X}) into (\ref{graphon-aggregated FBSDE followers}) and comparing the coefficients of the $\mathrm{d}W_t^{f,u}$ term in the backward equation yields
\begin{multline*}
	\big[I+P_t^{f,u}E_t^f(R_t^f)^{-1}E_t^{f\top}\big]q_t^{f,u}=P_t^{f,u}\big[D_t^f-E_t^f(R_t^f)^{-1}B_t^{f\top}P_t^{f,u}\big]\hat{X}_t^{f,u}\\
	-P_t^{f,u}E_t^f(R_t^f)^{-1}B_t^{f\top}\hat{\varphi}_t^{f,u}+P_t^{f,u}F_t^fG\hat{X}_t^{f,u}+P_t^{f,u}\sigma_t^{f,u}.
\end{multline*}
Assuming that $\hat R_t^f\equiv I+P_t^{f,u}E_t^f(R_t^f)^{-1}E_t^{f\top}$ is invertible, we have
\begin{equation}\label{q}
	\begin{aligned}
		q_t^{f,u}&=(\hat R_t^f)^{-1}P_t^{f,u}\big[D_t^f-E_t^f(R_t^f)^{-1}B_t^{f\top}P_t^{f,u}\big]\hat{X}_t^{f,u}\\
		&\qquad-(\hat R_t^f)^{-1}P_t^{f,u}E_t^f(R_t^f)^{-1}B_t^{f\top}\hat{\varphi}_t^{f,u}+(\hat R_t^f)^{-1}P_t^{f,u}F_t^f\big[G\hat{X}_t^{f,u}+\sigma_t^{f,u}\big].
	\end{aligned}
\end{equation}
Substituting (\ref{relation p and X}) and (\ref{q}) into (\ref{graphon-aggregated FBSDE followers}) and comparing the coefficients of the $\mathrm{d}t$ term in the backward equation, setting the coefficient of $X_t^{f,u}\mathrm{d}t$ to zero, we obtain the Riccati equation
\begin{equation}\label{Riccati followers}
	\begin{cases}
		\dot P_t^{f,u}+A_t^{f\top} P_t^{f,u}+P_t^{f,u}A_t^f-P_t^{f,u}B_t^{f}(R_t^f)^{-1}B_t^{f\top}P_t^{f,u}+\big[D_t^{f\top}-P_t^{f,u}B_t^{f}(R_t^f)^{-1}E_t^{f\top}\big]\\
		\qquad\cdot\big[I+P_t^{f,u}E_t^f(R_t^f)^{-1}E_t^{f\top}\big]^{-1}P_t^{f,u}\big[D_t^f-E_t^f(R_t^f)^{-1}B_t^{f\top}P_t^{f,u}\big]+Q_t^{f,11}=0,\\
		P_T^{f,u}=G^{f,11},
	\end{cases}
\end{equation}
and the forward-backward system
\begin{equation}\label{forward-backward system}
	\begin{cases}
		\mathrm{d}\hat{X}_t^{f,u}=\big(\hat{A}_t^{f,u}\hat{X}_t^{f,u}+\hat{B}_t^{f,u}\hat{\varphi}_t^{f,u}+\hat{C}_t^{f,u}G\hat{X}_t^{f,u}+\hat{b}_t^{f,u}\big)\mathrm{d}t\\
		\qquad\qquad +\big(\hat{D}_t^{f,u}\hat{X}_t^{f,u}+\hat{E}_t^{f,u}\hat{\varphi}_t^{f,u}+\hat{F}_t^{f,u}G\hat{X}_t^{f,u}+\hat{\sigma}_t^{f,u}\big)\mathrm{d}W_t^{f,u},\\
		\mathrm{d}\hat{\varphi}_t^{f,u}=\big(\hat{H}_t^{f,u}\hat{\varphi}_t^{f,u}+\hat{I}_t^{f,u}G\hat{X}_t^{f,u}-Q_t^{f,13}\mathbb{E}[X_t^l]+\hat{g}_t^{f,u}\big)\mathrm{d}t,\\
		\hat{X}_0^{f,u}=x_0^{f,u},\quad \hat{\varphi}_T^{f,u}=G^{f,12}G\hat{X}_T^{f,u}+G^{f,13}\mathbb{E}[X_T^l],
	\end{cases}
\end{equation}
where
\begin{equation*}
	\begin{aligned}
		\hat{A}_t^{f,u}&:=A_t^f-B_t^f(R_t^{f})^{-1}B_t^{f\top}P_t^{f,u}-B_t^f(R_t^f)^{-1}E_t^{f\top}(\hat R_t^f)^{-1}P_t^{f,u}\\
        &\qquad\qquad \cdot\big[D_t^f-E_t^f(R_t^f)^{-1}B_t^{f\top}P_t^{f,u}\big],\\
		\hat{B}_t^{f,u}&:=-B_t^f(R_t^{f})^{-1}B_t^{f\top}+B_t^{f}(R_t^f)^{-1}E_t^{f\top}(\hat R_t^f)^{-1}P_t^{f,u}E_t^f(R_t^f)^{-1}B_t^{f\top},\\
		\hat{C}_t^{f,u}&:=C_t^f-B_t^f(R_t^f)^{-1}E_t^{f\top}(\hat R_t^f)^{-1}P_t^{f,u}F_t^f,\\
		\hat{b}_t^{f,u}&:=b_t^f-B_t^f(R_t^f)^{-1}E_t^{f\top}(\hat R_t^f)^{-1}P_t^{f,u}\sigma_t^{f},\\
		\hat{D}_t^{f,u}&:=D_t^f-E_t^f(R_t^f)^{-1}B_t^{f\top}P_t^{f,u}-E_t^f(R_t^f)^{-1}E_t^{f\top}(\hat R_t^f)^{-1}P_t^{f,u}\\
		&\qquad\qquad \cdot\big[D_t^f-E_t^f(R_t^f)^{-1}B_t^{f\top}P_t^{f,u}\big],\\
		\hat{E}_t^{f,u}&:=-E_t^f(R_t^f)^{-1}B_t^{f\top}+E_t^f(R_t^f)^{-1}E_t^{f\top}(\hat R_t^f)^{-1}P_t^{f,u}E_t^f(R_t^f)^{-1}B_t^{f\top},\\
		\hat{F}_t^{f,u}&:=F_t^f-E_t^f(R_t^{f})^{-1}E_t^{f\top}(\hat R_t^f)^{-1}P_t^{f,u}F_t^f,\\
		\hat{\sigma}_t^{f,u}&:=\sigma_t^{f,u}-E_t^f(R_t^f)^{-1}E_t^{f\top}(\hat R_t^f)^{-1}P_t^{f,u}\sigma_t^{f,u},\\
	\end{aligned}
\end{equation*}
\begin{equation*}
	\begin{aligned}
		\hat{H}_t^{f,u}&:=-A_t^{f\top}+P_t^{f,u}B_t^f(R_t^{f})^{-1}B_t^{f\top}+\big[D_t^{f\top}-P_t^{f,u}B_t^f(R_t^f)^{-1}E_t^{f\top}\big]\\
		&\qquad\qquad \cdot(\hat R_t^f)^{-1}P_t^{f,u}E_t^f(R_t^f)^{-1}B_t^{f\top},\\
		\hat{I}_t^{f,u}&:=-Q_t^{f,12}-P_t^{f,u}C_t^f+\big[-D_t^{f\top}+P_t^{f,u}B_t^f(R_t^f)^{-1}E_t^{f\top}\big](\hat R_t^f)^{-1}P_t^{f,u}F_t^f,\\
		\hat{g}_t^{f,u}&:=-P_t^{f,u}b_t^{f,u}+\big[-D_t^{f\top}+P_t^{f,u}B_t^{f}(R_t^f)^{-1}E_t^{f\top}\big](\hat R_t^f)^{-1}P_t^{f,u}\sigma_t^{f,u}.
	\end{aligned}
\end{equation*}

Equation (\ref{Riccati followers}) is a Riccati equation. Using classical Woodbury matrix identity (see \cite{Golub-van Loan-13}), it is equivalent to the following Riccati equation:
\begin{equation}\label{Riccati followers by Woodbury matrix identity}
	\begin{cases}
		\dot P_t^{f,u}+P_t^{f,u}A_t^{f}+A_t^{f\top}P_t^{f,u}+D_t^{f\top}P_t^{f,u}D_t^{f}+Q^{f,11}_t-\big[B_t^{f\top}P_t^{f,u}+E_t^{f\top}P_t^{f,u}D_t^f\big]^\top\\
		\qquad\qquad \cdot\big[R_t^{f}+E_t^{f\top}P_t^{f,u}E_t^f\big]^{-1}\big[B_t^{f\top}P_t^{f,u}+E_t^{f\top}P_t^{f,u}D_t^f\big]=0,\\
		P_T^{f}=G^{f,11},
	\end{cases}
\end{equation}
By Theorem 7.2, Chapter 6 in \cite{Yong-Zhou-99}, the Riccati equation (\ref{Riccati followers by Woodbury matrix identity}) admits a unique solution $P^f$. Since the coefficients in (\ref{Riccati followers by Woodbury matrix identity}) do not depend on $u$, the solution $P^f$ is independent of $u$. Hence, for any $u\in I$, we denote $P^f=P^{f,u}$ and similarly for other coefficients.

\begin{thm}\label{solution followers problem}
	Under assumptions (A1), (A2), and (A3), and assuming that $\hat R_t^f\equiv I+P_t^{f,u}E_t^f(R_t^f)^{-1}E_t^{f\top}$ is invertible, then for any $\alpha^l\in\mathcal{U}^l$, the followers' problem admits a unique Nash equilibrium
	\begin{equation}\label{Nash equilibrium followers problem-state feedback}
		\begin{aligned}
			\hat{\alpha}_t^{f,u}&=-(R_t^f)^{-1}\big[B_t^{f\top}P_t^{f}+E_t^{f\top}(\hat R_t^f)^{-1}P_t^f(D_t^f-E_t^f(R_t^f)^{-1}B_t^{f\top}P_t^f)\big]\hat{X}_t^{f,u}\\
			&\qquad -(R_t^f)^{-1}\big[B_t^{f\top}-E_t^{f\top}(\hat R_t^f)^{-1}P_t^fE_t^f(R_t^f)^{-1}B_t^{f\top}\big]\hat{\varphi}_t^{f,u}\\
			&\qquad -(R_t^f)^{-1}E_t^{f\top}(\hat R_t^f)^{-1}P_t^f\big[F_t^fG\hat{X}_t^{f,u}+\sigma_t^f\big].
		\end{aligned}
	\end{equation} 
where $P^f$ satisfies the Riccati equation (\ref{Riccati followers}), and $(\hat{X}^{f},\hat{\varphi}^{f})$ satisfies the forward-backward system (\ref{forward-backward system}).
\end{thm}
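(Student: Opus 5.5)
Existence and uniqueness of the Nash equilibrium come from Theorem \ref{thm06}. So the work is to show that this equilibrium has the state-feedback form (\ref{Nash equilibrium followers problem-state feedback}). The plan is a standard decoupling (four-step-type) argument on the graphon-aggregated FBSDE (\ref{graphon-aggregated FBSDE followers}).

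By Theorem \ref{solvability of graphon-aggregated FBSDE followers}, this FBSDE has a unique solution $(\hat X^f,p^f,q^f)$, and by Theorem \ref{maximum principle followers problem} the equilibrium is $\hat\alpha^{f,u}_t=-(R^f_t)^{-1}(B^{f\top}_tp^{f,u}_t+E^{f\top}_tq^{f,u}_t)$. I would construct a candidate solution of (\ref{graphon-aggregated FBSDE followers}) and then invoke uniqueness.

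\textbf{Step 1 (the Riccati equation).} Take $P^f$ to be the unique solution of (\ref{Riccati followers by Woodbury matrix identity}), given by Theorem 7.2, Chapter 6 of \cite{Yong-Zhou-99}, which applies since $Q^{f,11},G^{f,11}\ge0$ and $R^f\gg0$. I would check via the Woodbury identity that, whenever $\hat R^f_t$ is invertible, (\ref{Riccati followers by Woodbury matrix identity}) is algebraically the same as (\ref{Riccati followers}). Hence $P^f$ solves (\ref{Riccati followers}) and does not depend on $u$.

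\textbf{Step 2 (the decoupled system).} With $P^f$ fixed, the system (\ref{forward-backward system}) has bounded deterministic coefficients. Its backward component $\hat\varphi$ has no martingale part. Since $G\hat X^f$ and $\mathbb{E}[X^l]$ are deterministic (Theorem \ref{existence and uniqueness of solutions of state equation} and the exact law of large numbers), $\hat\varphi^f$ is deterministic in $\omega$ and depends only on $u$ through $G\hat X$. Solvability of (\ref{forward-backward system}) is not proved separately. Instead I take the unique solution $(\hat X^f,p^f,q^f)$ of (\ref{graphon-aggregated FBSDE followers}) from Theorem \ref{solvability of graphon-aggregated FBSDE followers}, define
\[
\hat\varphi^{f,u}:=p^{f,u}-P^f\hat X^{f,u},
\]
and show that it satisfies the $\hat\varphi$-equation in (\ref{forward-backward system}).

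\textbf{Step 3 (identification).} Apply Itô's formula to $p^{f,u}_t-P^f_t\hat X^{f,u}_t$ and use the Riccati equation (\ref{Riccati followers}). Matching the $\mathrm{d}W^{f,u}$ coefficients gives (\ref{q}), which is where invertibility of $\hat R^f$ is used. Matching the $\mathrm{d}t$ coefficients: the $\hat X^{f,u}$ terms cancel by (\ref{Riccati followers}), and what remains is the linear ODE for $\hat\varphi$ with terminal value $G^{f,12}G\hat X_T+G^{f,13}\mathbb{E}[X_T^l]$. This ODE has a unique solution once the forcing is known, which yields the representation. Substituting (\ref{relation p and X}) and (\ref{q}) into (\ref{Nash equilibrium followers problem}) then gives (\ref{Nash equilibrium followers problem-state feedback}) by direct algebra.

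\textbf{Main obstacle.} The delicate step is the identification in Step 3. The $\mathrm{d}W$ matching must be done without first assuming that $\hat\varphi$ is deterministic. One route is to observe that $\hat\varphi$ defined as above is an Itô process whose diffusion part is
\[
q-P^f\times(\text{forward diffusion}),
\]
and then argue that this vanishes. This amounts to showing that the auxiliary linear BSDE for $\hat\varphi$ has deterministic data and therefore zero martingale component. Concretely, I would solve the deterministic ODE for $\tilde\varphi$ driven by the given deterministic $G\hat X$ and $\mathbb{E}[X^l]$, construct $(\tilde X,\tilde p,\tilde q)$ from it, and check that this triple solves (\ref{graphon-aggregated FBSDE followers}). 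Uniqueness from Theorem \ref{solvability of graphon-aggregated FBSDE followers} then forces it to coincide with $(\hat X^f,p^f,q^f)$, so no separate argument that the diffusion part vanishes is needed.
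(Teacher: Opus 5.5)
Your Steps 1--3 follow the paper's argument, which is just the formal computation before the theorem. The paper posits $p^{f,u}=P^f\hat X^{f,u}+\hat\varphi^{f,u}$ with $P^f,\hat\varphi^f$ deterministic and reads off (\ref{q}), (\ref{Riccati followers}) and (\ref{forward-backward system}) by matching coefficients. It then passes to (\ref{Riccati followers by Woodbury matrix identity}) to get $P^f$ from Yong--Zhou, and takes existence and uniqueness from Theorem \ref{thm06}. It never checks the ansatz.

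Adding a verification is the right instinct, but the one in your last paragraph is circular. Write $\bar z:=G\hat X^f$ for the aggregate of the known solution, and suppose $\tilde\varphi$ and $\tilde X$ are both built with this frozen $\bar z$. Then $(\tilde X,\tilde p,\tilde q)$ solves the system in which every aggregate term is the fixed function $\bar z$. That is not (\ref{graphon-aggregated FBSDE followers}), whose coefficients and terminal condition involve $G\tilde X$. If instead you put $G\tilde X$ into the forward equation, the drift of $\tilde p$ is off by $\hat I^f(\bar z-G\tilde X)$ and $\tilde p_T$ is off by $G^{f,12}(\bar z_T-G\tilde X_T)$. Either way, checking (\ref{graphon-aggregated FBSDE followers}) requires $G\tilde X=G\hat X^f$, which is what you want to conclude. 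So the uniqueness in Theorem \ref{solvability of graphon-aggregated FBSDE followers} cannot be invoked at that point.

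The fix is to use uniqueness for the frozen system instead. Hold $\bar z$ and $\mathbb{E}[X^l]$ fixed as deterministic inputs. Then the Hamiltonian system (\ref{Hamiltiniaon system followers}), with (\ref{Nash equilibrium followers problem}) substituted, is for each $u$ an FBSDE of the form (\ref{GMFLFBSDE}). Its data are $B^{1}=B^{2}=B^{3}=0$, $G^{2}=0$, $G^{1}=G^{f,11}\ge 0$, with the $\bar z$-terms absorbed into $b,\sigma,g,h$. By (\ref{monotonicity condition-followers}) it satisfies (S2), so Lemma \ref{uniqueness of GMFLFBSDE} gives uniqueness; the classical monotonicity or LQ argument would also do.

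Both $(\hat X^f,p^f,q^f)$ and your $(\tilde X,\tilde p,\tilde q)$ solve this frozen system, so they coincide. Only then do you get $G\tilde X=G\hat X^f$. It follows that $(\hat X^f,\tilde\varphi)$ solves (\ref{forward-backward system}) and that $p^f-P^f\hat X^f=\tilde\varphi$ has no martingale part. Your substitution into (\ref{Nash equilibrium followers problem}) then yields (\ref{Nash equilibrium followers problem-state feedback}).

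Incidentally, Yong--Zhou give $P^f\ge 0$, and $E^f(R^f)^{-1}E^{f\top}\ge 0$. Hence $\hat R^f_t$ has the same eigenvalues as $I+(P^f_t)^{1/2}E^f_t(R^f_t)^{-1}E^{f\top}_t(P^f_t)^{1/2}$, all at least $1$. So $\hat R^f_t$ is automatically invertible, and the caveat in your Step 1 is unnecessary.
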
 

\subsection{The leader's problem}

\begin{Ass}[A4]
	There exists a constant $c\in[0,1]$ such that for every $u\in I$,
	\begin{equation}\label{assumption leader}
		\int_I G(u,v)\lambda(\mathrm{d}v)=c.
	\end{equation}
\end{Ass}

\begin{prp}\label{leader's condition}
	Assume condition (A4) holds. Then
	\begin{equation}\label{condition leader}
		\int_I GX_t^{f,u}\lambda(\mathrm{d}u)=cM_t^f.
	\end{equation}
\end{prp}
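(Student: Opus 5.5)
The identity follows from Fubini's theorem, the symmetry of $G$, and the row-sum condition (A4). I will work pathwise, for $P$-a.e. $\omega$ and each fixed $t$. Write the left-hand side as an iterated integral:
\begin{equation*}
\int_I GX_t^{f,u}\lambda(\mathrm{d}u)=\int_I\int_I G(u,v)X_t^{f,v}\lambda(\mathrm{d}v)\lambda(\mathrm{d}u).
\end{equation*}
Then interchange the order of integration in $u$ and $v$, so that the right-hand side becomes
\begin{equation*}
\int_I\Big(\int_I G(u,v)\lambda(\mathrm{d}u)\Big)X_t^{f,v}\lambda(\mathrm{d}v).
\end{equation*}
Since $G$ is symmetric, $\int_I G(u,v)\lambda(\mathrm{d}u)=\int_I G(v,u)\lambda(\mathrm{d}u)=c$ by (A4). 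The expression therefore reduces to $c\int_I X_t^{f,v}\lambda(\mathrm{d}v)=cM_t^f$.

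The only point needing care is justifying the interchange of integrals. The integrand $(u,v)\mapsto G(u,v)X_t^{f,v}(\omega)$ is bounded in absolute value by $|X_t^{f,v}(\omega)|$, because $0\le G\le 1$. It is therefore integrable on $I\times I$ with respect to $\lambda\otimes\lambda$ as soon as $v\mapsto X_t^{f,v}(\omega)$ is $\lambda$-integrable. This holds for $P$-a.e. $\omega$ by condition (2) of the Fubini extension (Definition \ref{def Fubini extension}), applied to $X^f_t\in L^2_\boxtimes$, which is integrable on the rich Fubini extension (Theorem \ref{existence and uniqueness of solutions of state equation}).

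There is also a measurability subtlety: $X_t^{f,\cdot}(\omega)$ is $\mathcal{I}$-measurable, while $G$ is measurable only with respect to the product of the Lebesgue $\sigma$-algebras. Since $\mathcal{I}$ extends $\mathcal{B}_I$, the map $(u,v)\mapsto G(u,v)X_t^{f,v}(\omega)$ is $\mathcal{I}\otimes\mathcal{I}$-measurable, so the classical Fubini--Tonelli theorem on $(I\times I,\mathcal{I}\otimes\mathcal{I},\lambda\otimes\lambda)$ applies.

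Alternatively, one can bypass the pathwise argument altogether. By Theorem \ref{existence and uniqueness of solutions of state equation} and the exact law of large numbers (Proposition \ref{exact law of large numbers}), $GX_t^{f,u}=\int_I G(u,v)\mathbb{E}[X_t^{f,v}]\lambda(\mathrm{d}v)$ and $M_t^f=\int_I\mathbb{E}[X_t^{f,v}]\lambda(\mathrm{d}v)$. Both sides are then deterministic, and the same Fubini--symmetry computation applied to the deterministic function $v\mapsto\mathbb{E}[X_t^{f,v}]\in L^1(I)$ gives the identity.

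I expect the main obstacle to be this integrability and measurability justification; the computation itself is a short three-line argument.
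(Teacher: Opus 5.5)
Your proof is correct and follows the paper's argument, which simply interchanges the order of integration and applies (A4). You also make explicit the symmetry of $G$ (needed because (A4) controls $\int_I G(u,v)\lambda(\mathrm{d}v)$, while after the interchange one needs $\int_I G(u,v)\lambda(\mathrm{d}u)$) and supply the integrability justification, both of which the paper leaves tacit; the only nit is that your measurability remark really needs $\mathcal{I}$ to contain the Lebesgue (not merely Borel) sets, or a Borel version of $G$, which is cosmetic.
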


\begin{proof}
	By interchanging the order of integration,
	\begin{equation*}
		\int_I GX_t^{f,u}\lambda(\mathrm{d}u)=\int_I \biggl( \int_I G(u,v)X_t^{f,v}\lambda(\mathrm{d}v) \biggr)\lambda(\mathrm{d}u)=c\int_I X_t^{f,v}\lambda(\mathrm{d}v)=cM_t^f.\qedhere
	\end{equation*}
\end{proof}

Define
\begin{equation}\label{M t f and N t f}
	\hat{M}_t^f:=\int_I \hat{X}_t^{f,u}\lambda(\mathrm{d}u),\quad \hat{N}_t^f:=\int_I \hat{\varphi}_t^{f,u}\lambda(\mathrm{d}u).
\end{equation}
Integrating the forward-backward system (\ref{forward-backward system}) with respect to the follower's index $u$ yields
\begin{equation}\label{FBODE}
	\begin{cases}
		\mathrm{d}\hat{M}_t^f=\big[\big(\hat{A}_t^f+c\hat{C}_t^f\big)\hat{M}_t^f+\hat{B}_t^f\hat{N}_t^f+\hat{b}_t^f\big]\mathrm{d}t,\\
		\mathrm{d}\hat{N}_t^f=\big(c\hat{I}_t^f\hat{M}_t^f+\hat{H}_t^f\hat{N}_t^f-Q_t^{f,13}\mathbb{E}[X_t^l]+\hat{g}_t^f\big)\mathrm{d}t,\\
		\hat{M}_0^f=\int_I x_0^{f,u}\lambda(\mathrm{d}u),\quad \hat{N}_T^f=cG^{f,12}\hat{M}_t^f+G^{f,13}\mathbb{E}[X_T^l],
	\end{cases}
\end{equation}
which is a two-point boundary value problem for {\it ordinary differential equations} (ODEs). Assume that
\begin{equation}\label{relation N and M}
	\hat{N}_t^f=\hat{P}_t^f\hat{M}_t^f+\hat{N}_t^l.
\end{equation}
Substituting (\ref{relation N and M}) into (\ref{FBODE}) and comparing coefficients, we obtain a Riccati equation
\begin{equation}\label{Riccati leader}
	\begin{cases}
		\dot{\hat{P}}_t^f+\hat{P}_t^f\big(\hat{A}_t^f+c\hat{C}_t^f\big)-\hat{H}_t^f\hat{P}_t^f+\hat{P}_t^f\hat{B}_t^f\hat{P}_t^f-c\hat{I}_t^f=0,\\
		\hat{P}_T^f=cG^{f,12},
	\end{cases}
\end{equation}
 and a forward-backward system:
\begin{equation}\label{FBODE leader}
	\begin{cases}
		\mathrm{d}\hat{M}_t^f=\big[\big(\hat{A}_t^f+c\hat{C}_t^f+\hat{B}_t^f\hat{P}_t^f\big)\hat{M}_t^f+\hat{B}_t^f\hat{N}_t^l+\hat{b}_t^f \big]\mathrm{d}t,\\
		\mathrm{d}\hat{N}_t^l=\big[\big(\hat{H}_t^f-\hat{P}_t^f\hat{B}_t^f\big)\hat{N}_t^l+\big(-Q_t^{f,13}\mathbb{E}[X_t^l]+\hat{g}_t^f-\hat{P}_t^f\hat{b}_t^f\big) \big]\mathrm{d}t,\\
		\hat{M}_0^f=\int_I x_0^{f,u}\lambda(\mathrm{d}u),\quad \hat{N}_T^l=G^{f,13}\mathbb{E}[X_T^l].
	\end{cases}
\end{equation}

Equation (\ref{Riccati leader}) is an asymmetric Riccati equation, independent of $\hat{M}^f$ and $\hat{N}^f$ of (\ref{FBODE}), and can be solved separately (see \cite{Moon-Basar-18}).

\begin{thm}
	If the Riccati equation (\ref{Riccati leader}) admits a unique solution $\hat{P}^f\in C(0,T;\mathbb{R}^{n_1\times n_1})$, then the forward-backward system (\ref{FBODE leader}) has a unique solution $(\hat{M}^f,\hat{N}^l)$, and consequently, the forward-backward system (\ref{FBODE}) also has a unique solution $(\hat{M}^f,\hat{N}^f)$.
\end{thm}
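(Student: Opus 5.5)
Given the solution $\hat{P}^f$ of (\ref{Riccati leader}), the system (\ref{FBODE leader}) is triangular, so I will solve it backward-then-forward and then transfer the result to (\ref{FBODE}) through the relation (\ref{relation N and M}).

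\emph{Step 1: the backward ODE for $\hat{N}^l$.} Since $\hat{P}^f$ is continuous and the coefficients $\hat{H}^f,\hat{B}^f$ are bounded (they are built from bounded coefficients under (A1)--(A2), $P^f$, and $(\hat R^f)^{-1}$), the coefficient $\hat{H}^f-\hat{P}^f\hat{B}^f$ is bounded. The inhomogeneity $-Q^{f,13}\mathbb{E}[X^l]+\hat{g}^f-\hat{P}^f\hat{b}^f$ lies in $L^2(0,T)$, because $\mathbb{E}[X^l]$ is continuous and deterministic and $b^f,\sigma^f\in L^2$. The terminal value $G^{f,13}\mathbb{E}[X_T^l]$ is a fixed vector. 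The equation for $\hat{N}^l$ is therefore a linear ODE with a terminal condition and does not involve $\hat{M}^f$. By Carathéodory theory, or explicitly through the fundamental matrix $\Phi$ of $\hat{H}^f-\hat{P}^f\hat{B}^f$ (variation of constants), it has a unique absolutely continuous solution.

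\emph{Step 2: the forward ODE for $\hat{M}^f$.} With $\hat{N}^l$ fixed, the first equation of (\ref{FBODE leader}) is a linear forward ODE with bounded coefficient $\hat{A}^f+c\hat{C}^f+\hat{B}^f\hat{P}^f$, integrable forcing $\hat{B}^f\hat{N}^l+\hat{b}^f$, and a given initial value. It therefore also has a unique solution. Uniqueness of the pair follows because any solution must have its $\hat{N}^l$-component equal to the unique solution from Step 1, and then its $\hat{M}^f$-component is unique.

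\emph{Step 3: back to (\ref{FBODE}).} For existence, set $\hat{N}^f:=\hat{P}^f\hat{M}^f+\hat{N}^l$. Differentiating by the product rule and using (\ref{Riccati leader}) and (\ref{FBODE leader}), one checks that $(\hat{M}^f,\hat{N}^f)$ satisfies (\ref{FBODE}); this is the coefficient comparison used to derive (\ref{Riccati leader}), read in reverse. The terminal condition matches because $\hat{P}_T^f=cG^{f,12}$.

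For uniqueness, take any solution $(\hat{M},\hat{N})$ of (\ref{FBODE}) and define $\tilde N:=\hat{N}-\hat{P}^f\hat{M}$. Differentiating and inserting the Riccati equation shows that $\tilde N$ solves the $\hat{N}^l$-equation with the same terminal value, and that $\hat{M}$ solves the $\hat{M}^f$-equation with $\tilde N$ in place of $\hat{N}^l$. By Step 2, $(\hat{M},\tilde N)=(\hat{M}^f,\hat{N}^l)$, hence $\hat{N}=\hat{N}^f$.

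\emph{Main obstacle.} The main obstacle is this uniqueness transfer: the decoupling by $\hat{P}^f$ has to be valid for every solution of (\ref{FBODE}), not only for the constructed one. The computation of $\frac{\mathrm{d}}{\mathrm{d}t}\bigl(\hat{N}-\hat{P}^f\hat{M}\bigr)$ handles this directly, and it needs only the existence of $\hat{P}^f$ on all of $[0,T]$, which is exactly the hypothesis of the theorem. Under the stated assumptions the remaining steps are routine linear ODE theory.
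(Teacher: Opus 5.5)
Your proof is correct and is the argument the paper leaves implicit: the paper gives no separate proof, relying on the ansatz (\ref{relation N and M}) and the Riccati equation (\ref{Riccati leader}) to turn (\ref{FBODE}) into the triangular system (\ref{FBODE leader}), with $\mathbb{E}[X^l]$ treated as given data exactly as you do. Your backward-then-forward solution of (\ref{FBODE leader}), and especially the uniqueness transfer through $\hat{N}-\hat{P}^f\hat{M}$ (which uses only existence of $\hat{P}^f$), supply the details the paper omits.
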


Decomposing the leader's state $X^l$ as follows:
\begin{equation}\label{decompose leader's state}\hspace{-4mm}
	\begin{cases}
		\mathrm{d}\mathbb{E}[X_t^l]=\big\{A_t^l\mathbb{E}[X_t^l]+B_t^l\mathbb{E}[\alpha_t^l]+C_t^l\hat{M}_t^f+\mathbb{E}[b_t^l]\big\}\mathrm{d}t,\\
		\mathrm{d}(X_t^l-\mathbb{E}[X_t^l])=\big\{A_t^l(X_t^l-\mathbb{E}[X_t^l])+B_t^l(\alpha_t^l-\mathbb{E}[\alpha_t^l])+(b_t^l-\mathbb{E}[b_t^l])\big\}\mathrm{d}t\\
		\quad +\big\{D_t^l\mathbb{E}[X_t^l]+D_t^l(X_t^l-\mathbb{E}[X_t^l])+E_t^l\mathbb{E}[\alpha_t^l]+E_t^l(\alpha_t^l-\mathbb{E}[\alpha_t^l])+F_t^lM_t^f+\sigma_t^l\big\}\mathrm{d}W_t^l,\\
		\mathbb{E}[X_0^l]=\mathbb{E}[x_0^l],\quad X_0^l-\mathbb{E}[X_0^l]=x_0^l-\mathbb{E}[x_0^l],
	\end{cases}
\end{equation}
and combining the forward-backward systems (\ref{FBODE leader}) and (\ref{decompose leader's state}), we have
\begin{equation}\label{leader's state equation}\hspace{-4mm}
	\begin{cases}
		\mathrm{d}\hat{M}_t^f=\big[ \big(\hat{A}_t^f+c\hat{C}_t^f+\hat{B}_t^f\hat{P}_t^f\big)\hat{M}_t^f+\hat{B}_t^f\hat{N}_t^l+\hat{b}_t^f \big]\mathrm{d}t,\\
		\mathrm{d}\mathbb{E}[X_t^l]=\big\{A_t^l\mathbb{E}[X_t^l]+B_t^l\mathbb{E}[\alpha_t^l]+C_t^l\hat{M}_t^f+\mathbb{E}[b_t^l]\big\}\mathrm{d}t,\\
		\mathrm{d}(X_t^l-\mathbb{E}[X_t^l])=\big\{A_t^l(X_t^l-\mathbb{E}[X_t^l])+B_t^l(\alpha_t^l-\mathbb{E}[\alpha_t^l])+(b_t^l-\mathbb{E}[b_t^l])\big\}\mathrm{d}t\\
		\quad +\big\{D_t^l\mathbb{E}[X_t^l]+D_t^l(X_t^l-\mathbb{E}[X_t^l])+E_t^l\mathbb{E}[\alpha_t^l]+E_t^l(\alpha_t^l-\mathbb{E}[\alpha_t^l])+F_t^lM_t^f+\sigma_t^l\big\}\mathrm{d}W_t^l,\\
		\mathrm{d}\hat{N}_t^l=\big[ \big(\hat{H}_t^f-\hat{P}_t^f\hat{B}_t^f\big)\hat{N}_t^l+\big(-Q_t^{f,13}\mathbb{E}[X_t^l]+\hat{g}_t^f-\hat{P}_t^f\hat{b}_t^f\big) \big]\mathrm{d}t,\\
		\hat{M}_0^f=\int_I x_0^{f,u}\lambda(\mathrm{d}u),\quad \mathbb{E}[X_0^l]=\mathbb{E}[x_0^l],\\
        X_0^l-\mathbb{E}[X_0^l]=x_0^l-\mathbb{E}[x_0^l],\quad \hat{N}_T^l=G^{f,13}\mathbb{E}[X_T^l].
	\end{cases}
\end{equation}
Define
\begin{gather*}
	\hat{X}^l_t:=\begin{pmatrix}
		\hat{M}_t^f \\ \mathbb{E}[X_t^l] \\ X_t^l-\mathbb{E}[X_t^l]
	\end{pmatrix},\quad \hat{\alpha}_t^l:=\begin{pmatrix}
		\mathbb{E}[\alpha_t^l] \\ \alpha_t^l-\mathbb{E}[\alpha_t^l]
	\end{pmatrix},\\
	\hat{A}_t^l:=\begin{pmatrix}
		\hat{A}_t^f+c\hat{C}_t^f+\hat{B}_t^f\hat{P}_t^f & 0 & 0 \\
		C_t^l & A_t^l & 0 \\
		0 & 0 & A_t^l
	\end{pmatrix},\quad \hat{B}_t^l:=\begin{pmatrix}
		\hat{B}_t^f \\ 0 \\ 0
	\end{pmatrix},\quad \hat{C}_t^l:=\begin{pmatrix}
		0 & 0 \\ B_t^l & 0 \\ 0 & B_t^l
	\end{pmatrix},\\
	\hat{D}_t^l:=\begin{pmatrix}
		0 & 0 & 0 \\
		0 & 0 & 0 \\
		F_t^l & D_t^l & D_t^l
	\end{pmatrix},\quad \hat{E}_t^l:=\begin{pmatrix}
		0 & 0 \\ 0 & 0 \\ E_t^l & E_t^l
	\end{pmatrix},\\
	\hat{F}_t^l:=\hat{H}_t^f-\hat{P}_t^f\hat{B}_t^f,\quad \hat{I}_t^l=\begin{pmatrix}
			0 & -Q_t^{f,13} & 0
	\end{pmatrix},\quad \hat{H}^l:=\begin{pmatrix}
		0 \\ G^{f,13} \\ 0
	\end{pmatrix},\\
	\hat{x}_0^l:=\begin{pmatrix}
		\int_I x_0^{f,u}\lambda(\mathrm{d}u) \\ \mathbb{E}[x_0^l] \\ x_0^l-\mathbb{E}[x_0^l]
	\end{pmatrix},\quad \hat{b}_t^l=\begin{pmatrix}
		\hat{b}_t^f \\ \mathbb{E}[b_t^l] \\ b_t^l-\mathbb{E}[b_t^l]
	\end{pmatrix},\quad \hat{\sigma}_t^l:=\begin{pmatrix}
		0 \\ 0 \\ \sigma_t^l
	\end{pmatrix},\quad \hat{g}_t^l:=\hat{g}_t^f-\hat{P}_t^f\hat{b}_t^f.
\end{gather*}
Then we can write
\begin{equation}\label{leader's state equation-augmented}
	\begin{cases}
		\mathrm{d}\hat{X}_t^l=\big(\hat{A}_t^l\hat{X}_t^l+\hat{B}_t^l\hat{N}_t^f+\hat{C}_t^l\hat{\alpha}_t^l+\hat{b}_t^l\big)\mathrm{d}t+\big(\hat{D}_t^l\hat{X}_t^l+\hat{E}_t^l\hat{\alpha}_t^l+\hat{\sigma}_t^l\big)\mathrm{d}W_t^l,\\
		\mathrm{d}\hat{N}_t^f=\big(\hat{F}_t^l\hat{N}_t^f+\hat{I}_t^l\hat{X}_t^l+\hat{g}_t^l\big)\mathrm{d}t,\\
		\hat{X}_0^l=\hat{x}_0^l,\quad \hat{N}_T^f=\hat{H}^l\hat{X}_T^l.
	\end{cases}
\end{equation}

Rewriting
\begin{equation*}
	U_t^l:=\begin{pmatrix}
		0 & 1 & 1 \\ 1 & 0 & 0
	\end{pmatrix}\hat{X}_t^l,\quad \alpha_t^l:=\begin{pmatrix}
		1 & 1
	\end{pmatrix}\hat{\alpha}_t^l,
\end{equation*}
the cost functional of the leader becomes
\begin{equation}\label{cost of leader-augmented}
	J^l(\hat{\alpha}^l;\hat{\alpha}^f)=\frac{1}{2}\mathbb{E}\biggl[ \int_0^T \Big\{ \hat{X}_t^{l\top}\hat{Q}_t^l\hat{X}_t^l+\hat{\alpha}_t^{l\top}\hat{R}_t^l\hat{\alpha}_t^l \Big\}\mathrm{d}t+\hat{X}_T^{l\top}\hat{G}^l\hat{X}_T^l \biggr],
\end{equation}
where
\begin{gather*}
		\hat{Q}_t^l:=\begin{pmatrix}
			0 & 1 & 1 \\ 1 & 0 & 0
		\end{pmatrix}^\top Q_t^l\begin{pmatrix}
			0 & 1 & 1 \\ 1 & 0 & 0
		\end{pmatrix},\quad
		\hat{R}_t^l:=  \begin{pmatrix}
				R_t^l & 0 \\
				0 & R_t^l
		\end{pmatrix},  \\
		\hat{G}^l:=\begin{pmatrix}
			0 & 1 & 1 \\ 1 & 0 & 0
		\end{pmatrix}^\top G^l\begin{pmatrix}
			0 & 1 & 1 \\ 1 & 0 & 0
		\end{pmatrix}.
\end{gather*}

The leader's problem is to minimize (\ref{cost of leader-augmented}) subject to the system equation (\ref{leader's state equation-augmented}).

\begin{dfn}\label{leader's problem}
	The leader's problem is said to be finite at $x_0^l\in\mathbb{R}^{n_2}$ if there exists $\alpha^l\in\mathcal{U}^l$ such that
	\begin{equation}\label{finite}
		J^l(\hat{\alpha}^l;\hat{\alpha}^f)>-\infty.
	\end{equation}
	It is said to be solvable at $x_0^l\in\mathbb{R}^{n_2}$ if there exists $\alpha^l\in\mathcal{U}^l$ such that
	\begin{equation}\label{solvable}
		J^l(\hat{\alpha}^l;\hat{\alpha}^f)=\inf_{\beta\in\mathcal{U}^l}J^l(\hat{\beta};\hat{\alpha}^f).
	\end{equation}
\end{dfn}

Since $\hat{Q}_t^l\ge 0$, $\hat{R}_t^l> 0$, and $\hat{G}^l\ge 0$, the leader's problem is always finite. Furthermore, we have the following result.

\begin{thm}\label{maximum principle leader problem}
	Assume conditions (A1), (A2), (A3), and (A4) hold. If the system
	\begin{equation}\label{FBSDEs leader}
		\begin{cases}
			\mathrm{d}\hat{\hat{X}}_t^l=\big(\hat{A}_t^l\hat{\hat{X}}_t^l+\hat{B}_t^l\hat{\hat{N}}_t^l+\hat{C}_t^l\hat{\alpha}_t^l+\hat{b}_t^l\big)\mathrm{d}t+\big(\hat{D}_t^l\hat{\hat{X}}_t^l+\hat{E}_t^l\hat{\alpha}_t^l+\hat{\sigma}_t^l\big)\mathrm{d}W_t^l,\\
			\mathrm{d}\hat{\hat{N}}_t^l=\big(\hat{F}_t^l\hat{\hat{N}}_t^l+\hat{I}_t^l\hat{X}_t^l+\hat{g}_t^l\big)\mathrm{d}t,\\
			\mathrm{d}y_t^l=-\big(\hat{A}_t^{l\top}y_t^l+\hat{D}_t^{l\top}z_t^l+\hat{Q}_t^l\hat{\hat{X}}_t^l+\hat{I}_t^{l\top}\psi_t^l\big)\mathrm{d}t+z_t^l\mathrm{d}W_t^l,\\
			\mathrm{d}\psi_t^l=-\big(\hat{F}_t^{l\top}\psi_t^l+\hat{B}_t^{l\top}y_t^l\big)\mathrm{d}t,\\
			\hat{\hat{X}}_0^l=\hat{x}_0^l,\quad \hat{\hat{N}}_T^l=\hat{H}^l\hat{\hat{X}}_T^l,\quad y_T^l=\hat{G}^l\hat{\hat{X}}_T^l-\hat{H}^{l\top}\psi_T^l,\quad \psi_0^l=0,
		\end{cases}
	\end{equation}
	admits a unique solution $(\hat{\hat{X}}^l,\hat{\hat{N}}^l,y^l,z^l,\psi^l)$, then the leader's problem is solvable at $x_0^l\in\mathbb{R}^{n_2}$ (denote the optimal pair by $(\hat{\hat{\alpha}}^l,\hat{\hat{X}}^l,\hat{\hat{N}}^l)$), and we have
	\begin{equation}\label{optimal control leader}
		\hat{R}_t^l\hat{\hat{\alpha}}_t^l+\hat{C}_t^{l\top}y_t^l+\hat{E}_t^{l\top}z_t^l=0.
	\end{equation}
\end{thm}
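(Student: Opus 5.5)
The plan is a sufficiency argument for a convex LQ problem whose state equation (\ref{leader's state equation-augmented}) is a coupled forward SDE / backward ODE. Let $\hat{\hat{\alpha}}^l$ be defined by (\ref{optimal control leader}), i.e. $\hat{\hat{\alpha}}^l_t=-(\hat{R}_t^l)^{-1}(\hat{C}_t^{l\top}y_t^l+\hat{E}_t^{l\top}z_t^l)$; this is well defined because $R^l>0$ is continuous by (A2). The state-adjoint tuple is the assumed unique solution of (\ref{FBSDEs leader}). Now take an arbitrary admissible $\beta\in\mathcal{U}^l$, with augmented form $\hat\beta$. Let $(X^\beta,N^\beta)$ be the corresponding solution of (\ref{leader's state equation-augmented}); its existence follows from the solvability of the followers' problem (Theorem \ref{thm06}) together with the reduction leading to (\ref{FBODE leader}). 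Set the differences
\[
\delta X=X^\beta-\hat{\hat X}^l,\quad \delta N=N^\beta-\hat{\hat N}^l,\quad \delta\alpha=\hat\beta-\hat{\hat\alpha}^l .
\]
These satisfy the homogeneous linear system
\[
\mathrm{d}\delta X=(\hat A^l\delta X+\hat B^l\delta N+\hat C^l\delta\alpha)\mathrm{d}t+(\hat D^l\delta X+\hat E^l\delta\alpha)\mathrm{d}W^l,\qquad
\mathrm{d}\delta N=(\hat F^l\delta N+\hat I^l\delta X)\mathrm{d}t,
\]
with $\delta X_0=0$ and $\delta N_T=\hat H^l\delta X_T$.

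Convexity gives the key expansion. Since $\hat Q^l\ge0$, $\hat G^l\ge0$ and $\hat R^l>0$,
\[
J^l(\hat\beta)-J^l(\hat{\hat\alpha}^l)\ge \mathbb{E}\Big[\int_0^T\{\delta X_t^\top\hat Q_t^l\hat{\hat X}_t^l+\delta\alpha_t^\top\hat R_t^l\hat{\hat\alpha}_t^l\}\mathrm{d}t+\delta X_T^\top\hat G^l\hat{\hat X}_T^l\Big].
\]
It then suffices to show the right-hand side vanishes. To do this, apply Itô's formula to $\langle y^l,\delta X\rangle$ and the ordinary product rule to $\langle\psi^l,\delta N\rangle$ on $[0,T]$, then add the two identities and take expectations.

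The main obstacle is the boundary and cross terms. The $\hat A^l$ and $\hat D^l$ terms cancel against the $y^l,z^l$ dynamics. The cross term $\langle y^l,\hat B^l\delta N\rangle$ cancels with $-\langle\hat B^{l\top}y^l,\delta N\rangle$ from $\mathrm{d}\psi^l$. The term $\langle\psi^l,\hat I^l\delta X\rangle$ cancels with $-\langle\hat I^{l\top}\psi^l,\delta X\rangle$ from $\mathrm{d}y^l$. The $\hat F^l$ terms cancel as well.

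For the boundary terms, use $\delta X_0=0$ and $\psi^l_0=0$; these kill the time-$0$ contributions. At time $T$ we get
\[
\langle y^l_T,\delta X_T\rangle+\langle\psi^l_T,\delta N_T\rangle=\langle\hat G^l\hat{\hat X}^l_T-\hat H^{l\top}\psi^l_T,\delta X_T\rangle+\langle\psi^l_T,\hat H^l\delta X_T\rangle=\langle\hat G^l\hat{\hat X}^l_T,\delta X_T\rangle .
\]
This is exactly why the terminal condition of $y^l$ carries the $-\hat H^{l\top}\psi^l_T$ correction and $\psi^l$ is forward with $\psi^l_0=0$. Collecting terms,
\[
\mathbb{E}\big[\delta X_T^\top\hat G^l\hat{\hat X}^l_T\big]=\mathbb{E}\int_0^T\{-\delta X^\top\hat Q^l\hat{\hat X}^l+\delta\alpha^\top(\hat C^{l\top}y^l+\hat E^{l\top}z^l)\}\mathrm{d}t .
\]
Substituting this into the expansion, the right-hand side becomes $\mathbb{E}\int_0^T\delta\alpha^\top(\hat R^l\hat{\hat\alpha}^l+\hat C^{l\top}y^l+\hat E^{l\top}z^l)\mathrm{d}t$, which is $0$ by (\ref{optimal control leader}).

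Two technical points need care. First, the stochastic integrals must be true martingales. This holds by square integrability of all processes and boundedness of the coefficients from (A1), using a BDG/localization argument. Second, in the paper's statement the $\hat N$ equation involves $\hat I^l\hat X^l$ with the same $\hat{\hat X}^l$, and we read it consistently. Note also that the $\psi^l$ equation is deterministic-driven, so $\psi^l$ is adapted.

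Hence $J^l(\hat\beta)\ge J^l(\hat{\hat\alpha}^l)$ for every admissible $\beta$. Therefore the leader's problem is solvable at $x_0^l$, with optimal control characterized by (\ref{optimal control leader}).
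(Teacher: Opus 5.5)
You reproduce the paper's duality computation: It\^o's formula for $\langle y^l,\delta X\rangle+\langle\psi^l,\delta N\rangle$, with the time-$0$ terms killed by $\delta X_0=0$ and $\psi^l_0=0$, and the time-$T$ terms reduced to $\langle\hat G^l\hat{\hat X}^l_T,\delta X_T\rangle$ by the correction $-\hat H^{l\top}\psi^l_T$. Your convexity inequality is the paper's expansion of $\lambda\mapsto J^l(\hat{\hat\alpha}^l+\lambda\hat\beta)$ at $\lambda=1$ with the nonnegative quadratic part dropped. All of that is correct.

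\textbf{The gap.} The last step fails, and the paper's proof shares this flaw. From $J^l(\hat\beta)\ge J^l(\hat{\hat\alpha}^l)$ for all $\beta\in\mathcal U^l$ you can conclude solvability only if $\hat{\hat\alpha}^l$ is itself the augmented form $(\mathbb E[\alpha_t],\alpha_t-\mathbb E[\alpha_t])$ of some $\alpha\in\mathcal U^l$. That is, its first block must be deterministic and its second block centred, and you never check this.

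It fails in general. Split $y^l=(y^1,y^2,y^3)$ and $z^l=(z^1,z^2,z^3)$ along $\hat X^l=(\hat M^f,\mathbb E[X^l],X^l-\mathbb E[X^l])$. Then condition (\ref{optimal control leader}) gives the two control blocks $-(R^l_t)^{-1}(B^{l\top}_ty^2_t+E^{l\top}_tz^3_t)$ and $-(R^l_t)^{-1}(B^{l\top}_ty^3_t+E^{l\top}_tz^3_t)$. Here $y^2$ is a genuinely random BSDE solution: its terminal value contains $G^{l,11}X^l_T$, and its driver contains $Q^{l,11}X^l$ and $D^{l\top}z^3$.

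Two consequences follow. First, $\hat{\hat X}^l$ is not the augmented state of any leader strategy: its second block is random, so it is not $\mathbb E[X^l]$, and the backward ODE for $\hat{\hat N}^l$ gets random terminal data. Second, your inequality only shows that $J^l(\hat{\hat\alpha}^l)$ is a lower bound for the infimum.

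This is not a technicality. Write $\hat\alpha^l=(\hat\alpha^1,\hat\alpha^2)$ and take $Q^{f,13}=0$, $G^{f,13}=0$. Then $y^2=y^3$, and the state part of the cost depends on the control only through $\hat\alpha^1+\hat\alpha^2$. So the unconstrained augmented problem is the leader's problem with $R^l$ replaced by $R^l/2$. Along an admissible pair, (\ref{optimal control leader}) forces $\mathbb E[\alpha]=\alpha-\mathbb E[\alpha]$, hence $\alpha=0$. So the stated characterization fails whenever the true optimal control is nonzero.

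\textbf{The missing idea and the fix.} Admissible variations $\delta\alpha$ are themselves of augmented form, so only a projected first-order condition is needed. Let $\delta\alpha$ be the augmented form of $\gamma=\beta-\alpha$, and set $\Lambda=(\Lambda^1,\Lambda^2):=\hat R^l\hat{\hat\alpha}^l+\hat C^{l\top}y^l+\hat E^{l\top}z^l$. Then
\[
\mathbb E\int_0^T\delta\alpha_t^\top\Lambda_t\,\mathrm{d}t=\mathbb E\int_0^T\gamma_t^\top\big(\mathbb E[\Lambda^1_t]+\Lambda^2_t-\mathbb E[\Lambda^2_t]\big)\mathrm{d}t .
\]
So the correct stationarity condition is
\[
R^l_t\alpha_t+B^{l\top}_t\big(\mathbb E[y^2_t]+y^3_t-\mathbb E[y^3_t]\big)+E^{l\top}_tz^3_t=0,
\]
which does define an element of $\mathcal U^l$. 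If you close (\ref{FBSDEs leader}) with this mean-field feedback and assume the resulting system is uniquely solvable, your convexity argument goes through verbatim. With the full condition (\ref{optimal control leader}), your argument proves the statement only under the extra hypothesis that the resulting control is of augmented form.

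A minor point: $\psi^l$ is driven by the random process $\hat B^{l\top}y^l$, so it is not ``deterministic-driven''. It is still adapted, because it solves a forward ODE started at $\psi^l_0=0$.
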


\begin{proof}
	Consider the system
	\begin{equation*}
		\begin{cases}
			\mathrm{d}\hat{X}_t^{0}=\big(\hat{A}_t^l\hat{X}_t^0+\hat{B}_t^l\hat{N}_t^0+\hat{C}_t^l\hat{\alpha}_t^l\big)\mathrm{d}t+\big(\hat{D}_t^l\hat{X}_t^0+\hat{E}_t^l\hat{\alpha}_t^l\big)\mathrm{d}W_t^l,\\
			\mathrm{d}\hat{N}_t^0=\big(\hat{F}_t^l\hat{N}_t^0+\hat{I}_t^l\hat{X}_t^0\big)\mathrm{d}t,\\
			\mathrm{d}y_t^0=-\big(\hat{A}_t^{l\top}y_t^0+\hat{D}_t^{l\top}z_t^0+\hat{Q}_t^l\hat{X}_t^0+\hat{I}_t^{l\top}\psi_t^0\big)\mathrm{d}t+z_t^0\mathrm{d}W_t^l,\\
			\mathrm{d}\psi_t^0=-\big(\hat{F}_t^{l\top}\psi_t^0+\hat{B}_t^{l\top}y_t^0\big)\mathrm{d}t,\\
			\hat{X}_0^0=0,\quad \hat{N}_T^0=\hat{H}^l\hat{X}_T^0,\quad y_T^0=\hat{G}^l\hat{X}_T^0-\hat{H}^{l\top}\psi_T^0,\quad \psi_0^0=0,
		\end{cases}
	\end{equation*}
	which admits a unique solution. Applying It\^o's formula yields
	\begin{multline*}
		\mathbb{E}[\hat{X}_T^{0\top}G^l\hat{X}_T^0]=\mathbb{E}[\hat{X}_T^{0\top}y_T^0]-\mathbb{E}[\hat{X}_0^{0\top}y_0^0]+\mathbb{E}[\hat{N}_T^{0\top}\psi_T^0]-\mathbb{E}[\hat{N}_0^{0\top}\psi_0^0]\\
		=\mathbb{E}\biggl[ \int_0^T\Big\{ -\hat{X}_t^{0\top}\hat{Q}_t^l\hat{X}_t^0+\hat{\alpha}_t^{l\top}\hat{C}_t^{l\top}y_t^0+\hat{\alpha}_t^{l\top}\hat{E}_t^{l\top}z_t^0 \Big\}\mathrm{d}t \biggr],
	\end{multline*}
	and
	\begin{multline*}
		\mathbb{E}[\hat{X}_T^{0\top}G^l\hat{\hat{X}}_T^l]=\mathbb{E}[\hat{X}_T^{0\top}y_T^l]-\mathbb{E}[\hat{X}_0^{0\top}y_0^l]+\mathbb{E}[\hat{N}_T^{0\top}\psi_T^l]-\mathbb{E}[\hat{N}_0^{0\top}\psi_0^l]\\
		=\mathbb{E}\biggl[ \int_0^T\Big\{ -\hat{X}_t^{0\top}\hat{Q}_t^l\hat{\hat{X}}_t^l+\hat{\alpha}_t^{l\top}\hat{C}_t^{l\top}y_t^l+\hat{\alpha}_t^{l\top}\hat{E}_t^{l\top}z_t^l \Big\}\mathrm{d}t \biggr].
	\end{multline*}
	For any $\beta\in\mathcal{U}^l$, we get
	\begin{equation}\label{function of lambda}
		\begin{aligned}
			&\hat{J}^l(\hat{\hat{\alpha}}^l+\lambda\hat{\beta})-\hat{J}^l(\hat{\hat{\alpha}}^l)\\
            &=\frac{1}{2}\mathbb{E}\biggl[ \int_0^T\Big\{ \big(\hat{\hat{X}}_t^l+\lambda\hat{X}_t^0\big)^\top\hat{Q}_t^l\big(\hat{\hat{X}}_t^l+\lambda\hat{X}_t^0\big)+\big(\hat{\hat{\alpha}}^l+\lambda\hat{\beta}\big)^\top\hat{R}_t^l\big(\hat{\hat{\alpha}}^l+\lambda\hat{\beta}\big) \Big\}\mathrm{d}t\\
			&\qquad\qquad +\big(\hat{\hat{X}}_T^l+\lambda\hat{X}_T^0\big)^\top\hat{G}^l\big(\hat{\hat{X}}_T^l+\lambda\hat{X}_T^0\big) \biggr]\\
            &\quad-\frac{1}{2}\mathbb{E}\biggl[ \int_0^T\Big\{ \hat{\hat{X}}_t^{l\top}\hat{Q}_t^l\hat{\hat{X}}_t^{l}+\hat{\hat{\alpha}}^{l\top}_t\hat{R}_t^l\hat{\hat{\alpha}}_t^{l} \Big\}\mathrm{d}t+\hat{\hat{X}}_T^{l\top}\hat{G}^l\hat{\hat{X}}_T^{l} \biggr]\\
			&=\frac{1}{2}\lambda^2\mathbb{E}\biggl[ \int_0^T\Big\{ \hat{X}_t^{0\top}\hat{Q}_t\hat{X}_t^0+\hat{\beta}_t^\top\hat{R}_t^l\hat{\beta}_t \Big\}\mathrm{d}t+\hat{X}_T^{0\top}\hat{G}^l\hat{X}_T^0 \biggr]\\
			&\quad +\lambda \mathbb{E}\biggl[ \int_0^T\Big\{ \hat{X}_t^{0\top}\hat{Q}_t\hat{\hat{X}}_t^l+\hat{\beta}_t^\top\hat{R}_t^l\hat{\hat{\alpha}}_t^l \Big\}\mathrm{d}t+\hat{X}_T^{0\top}\hat{G}^l\hat{\hat{X}}_T^l \biggr]\\
			&=\frac{1}{2}\lambda^2\mathbb{E}\biggl[ \int_0^T\Big\{ \hat{X}_t^{0\top}\hat{Q}_t\hat{X}_t^0+\hat{\beta}_t^\top\hat{R}_t^l\hat{\beta}_t \Big\}\mathrm{d}t+\hat{X}_T^{0\top}\hat{G}^l\hat{X}_T^0 \biggr]\\
			&\quad +\lambda \mathbb{E}\biggl[ \int_0^T\Big\{ \beta_t^\top\big(\hat{R}_t^l\hat{\hat{\alpha}}_t^l+\hat{C}_t^{l\top}y_t^l+\hat{E}_t^{l\top}z_t^l\big) \Big\}\mathrm{d}t \biggr].
		\end{aligned}
	\end{equation}
	Then $\hat{\hat{\alpha}}^l$ is optimal if and only if for every $\beta\in\mathcal{U}^l$, the function in $\lambda$ given by (\ref{function of lambda}) attains its minimum at $\lambda=0$, which is equivalent to (\ref{optimal control leader}).
\end{proof}

It remains to solve (\ref{FBSDEs leader}) and (\ref{optimal control leader}). From (\ref{optimal control leader}), we have
\begin{equation}\label{optimal control leader-by adjoint variables}
	\hat{\hat{\alpha}}_t^l=-(\hat{R}_t^l)^{-1}\big(\hat{C}_t^{l\top}y_t^l+\hat{E}_t^{l\top}z_t^l\big).
\end{equation}
Substituting (\ref{optimal control leader-by adjoint variables}) into (\ref{FBSDEs leader}) and defining
\begin{gather*}
	\tilde{X}_t:=\begin{pmatrix}
		\hat{\hat{X}}_t^l \\ -\psi_t^l
	\end{pmatrix},\quad \tilde{Y}_t:=\begin{pmatrix}
		y_t^l \\ \hat{\hat{N}}_t^l
	\end{pmatrix},\quad \tilde{Z}_t:=\begin{pmatrix}
		z_t^l \\ 0
	\end{pmatrix},\\
	\tilde{A}_t:=\begin{pmatrix}
		\hat{A}_t^l & 0 \\
		0 & -\hat{F}_t^{l\top}
	\end{pmatrix},\quad \tilde{B}_t:=\begin{pmatrix}
		-\hat{C}_t^l(\hat{R}_t^l)^{-1}\hat{C}_t^{l\top} & \hat{B}_t^l \\
		\hat{B}_t^{l\top} & 0
	\end{pmatrix},\quad \tilde{C}_t:=\begin{pmatrix}
		-\hat{C}_t^l(\hat{R}_t^l)^{-1}\hat{E}_t^{l\top} & 0 \\ 0 & 0
	\end{pmatrix},\\
	\tilde{D}_t:=\begin{pmatrix}
		\hat{D}_t^l & 0 \\ 0 & 0
	\end{pmatrix},\quad \tilde{E}_t:=\begin{pmatrix}
		-\hat{E}_t^l(R_t^l)^{-1}\hat{E}_t^{l\top} & 0 \\ 0 & 0
	\end{pmatrix},\quad \tilde{b}_t:=\begin{pmatrix}
		\hat{b}_t^l \\ 0
	\end{pmatrix},\quad \tilde{\sigma}_t:=\begin{pmatrix}
		\hat{\sigma}_t^l \\ 0
	\end{pmatrix},\\
	\tilde{Q}_t:=\begin{pmatrix}
		-\hat{Q}_t^l & \hat{I}_t^{l\top} \\ \hat{I}_t^l & 0
	\end{pmatrix},\quad \tilde{g}_t:=\begin{pmatrix}
		0 \\ \hat{g}_t^l
	\end{pmatrix},\quad \tilde{x}_0:=\begin{pmatrix}
		\hat{x}_0^l \\ 0
	\end{pmatrix},\quad \tilde{F}:=\begin{pmatrix}
		\hat{G}^l & \hat{H}^{l\top} \\
		\hat{H}^l & 0
	\end{pmatrix},\\
	\tilde{H}_t:=\begin{pmatrix}
		-(\hat{R}_t^l)^{-1}\hat{C}_t^{l\top} & 0
	\end{pmatrix},\quad \tilde{I}_t:=\begin{pmatrix}
		-(\hat{R}_t^l)^{-1}\hat{E}_t^{l\top} & 0
	\end{pmatrix},
\end{gather*}
we obtain the FBSDE
\begin{equation}\label{FBSDE leader-augmented}
	\begin{cases}
		\mathrm{d}\tilde{X}_t=\big(\tilde{A}_t\tilde{X}_t+\tilde{B}_t\tilde{Y}_t+\tilde{C}_t\tilde{Z}_t+\tilde{b}_t\big)\mathrm{d}t+\big(\tilde{D}_t\tilde{X}_t+\tilde{C}_t^\top\tilde{Y}_t+\tilde{E}_t\tilde{Z}_t+\tilde{\sigma}_t\big)\mathrm{d}W_t^l,\\
		\mathrm{d}\tilde{Y}_t=\big(\tilde{Q}_t\tilde{X}_t-\tilde{A}_t^\top\tilde{Y}_t-\tilde{D}_t^\top \tilde{Z}_t+\tilde{g}_t\big)\mathrm{d}t+\tilde{Z}_t\mathrm{d}W_t^l,\\
		\tilde{X}_0=\tilde{x}_0,\quad \tilde{Y}_T=\tilde{F}\tilde{X}_T,
	\end{cases}
\end{equation}
and the optimal control $\hat{\hat{\alpha}}^l$ in (\ref{optimal control leader-by adjoint variables}) can be written as
\begin{equation}\label{optimal control leader-by augmented variables}
	\hat{\hat{\alpha}}^l_t=\tilde{H}_t\tilde{Y}_t+\tilde{I}_t\tilde{Z}_t.
\end{equation}

By classical FBSDE theory, assuming $\tilde{Y}_t=\tilde{P}_t\tilde{X}_t+\tilde{\varphi}_t$, we obtain
\begin{equation}\label{tilde Z}
	\tilde{Z}_t=(I-\tilde{P}_t\tilde{E}_t)^{-1}\big[\tilde{P}_t\big(\tilde{D}_t+\tilde{C}_t^{\top}\tilde{P}_t\big)\tilde{X}_t+\tilde{P}_t\tilde{C}_t^\top\tilde{\varphi}_t+\tilde{P}_t\tilde{\sigma}_t\big],
\end{equation}
if $I-\tilde{P}_t\tilde{E}_t$ is invertible, the Riccati equation
\begin{equation}\label{Riccati leader-augmented}
	\begin{cases}
		\dot{\tilde{P}}_t+\tilde{P}_t\tilde{A}_t+\tilde{A}_t^\top\tilde{P}_t+\tilde{P}_t\tilde{B}_t\tilde{P}_t\\
		\qquad +\big(\tilde{P}_t\tilde{C}_t+\tilde{D}_t^{\top}\big)(I-\tilde{P}_t\tilde{E}_t)^{-1}\tilde{P}_t\big(\tilde{D}_t+\tilde{C}_t^{\top}\tilde{P}_t\big)-\tilde{Q}_t=0,\\
		\tilde{P}_T=\tilde{F},
	\end{cases}
\end{equation}
and the forward-backward system
\begin{equation}\label{FBSDE leader-final}\hspace{-3mm}
	\begin{cases}
		\mathrm{d}\tilde{X}_t=\big\{ \big[\tilde{A}_t+\tilde{B}_t\tilde{P}_t+\tilde{C}_t(I-\tilde{P}_t\tilde{E}_t)^{-1}\tilde{P}_t\big(\tilde{D}_t+\tilde{C}_t^\top\tilde{P}_t\big)\big]\tilde{X}_t\\
		\qquad\qquad +\big[\tilde{C}_t(I-\tilde{P}_t\tilde{E}_t)^{-1}\tilde{P}_t\tilde{C}_t^\top+\tilde{B}_t\big]\tilde{\varphi}_t+\big[\tilde{b}_t+\tilde{C}_t(I-\tilde{P}_t\tilde{E}_t)^{-1}\tilde{P}_t\tilde{\sigma}_t\big] \big\}\mathrm{d}t\\
		\qquad\quad +\big\{ \big[\tilde{D}_t+\tilde{C}_t^\top\tilde{P}_t+\tilde{E}_t(I-\tilde{P}_t\tilde{E}_t)^{-1}\tilde{P}_t\big(\tilde{D}_t+\tilde{C}_t^\top\tilde{P}_t\big)\big]\tilde{X}_t\\
		\qquad\qquad +\big[\tilde{E}_t(I-\tilde{P}_t\tilde{E}_t)^{-1}\tilde{P}_t\tilde{C}_t^\top+\tilde{C}_t^\top\big]\tilde{\varphi}_t+\big[\tilde{\sigma}_t+\tilde{E}_t(I-\tilde{P}_t\tilde{E}_t)^{-1}\tilde{P}_t\tilde{\sigma}_t\big] \big\}\mathrm{d}W_t^l,\\
		\mathrm{d}\tilde{\varphi}_t+\big\{ \big[\tilde{P}_t\tilde{B}_t+\hat{A}_t^\top+\big(\tilde{P}_t\tilde{C}_t+\tilde{D}_t^\top\big)(I-\tilde{P}_t\tilde{E}_t)^{-1}\tilde{P}_t\tilde{C}_t^\top\big]\tilde{\varphi}_t\\
		\qquad\qquad +\big[\tilde{P}_t\tilde{b}_t-\tilde{g}_t+\big(\tilde{P}_t\tilde{C}_t+\tilde{D}_t^\top\big)(I-\tilde{P}_t\tilde{E}_t)^{-1}\tilde{P}_t\tilde{\sigma}_t\big] \big\}\mathrm{d}t=0,\\
		\tilde{X}_0=\tilde{x}_0,\quad \tilde{\varphi}_T=0.
	\end{cases}
\end{equation}

Equation (\ref{FBSDE leader-final}) constitutes a decoupled forward-backward structure, where the backward component is a boundary value problem for an ODE. This structure allows for sequentially solving the backward part first, followed by the forward part, to obtain the adapted solution of the equation.

\begin{thm}\label{solution leader problem}
	Assume conditions (A1), (A2), (A3), and (A4) hold. If the Riccati equation (\ref{Riccati leader-augmented}) and the forward-backward system (\ref{FBSDE leader-final}) admit unique solutions $\tilde{P}$ and $(\tilde{X},\tilde{\varphi})$ respectively, $I-\tilde{P}_t\tilde{E}_t$ is invertible, then the leader's problem has a unique optimal control $\hat{\hat{\alpha}}^l$, given by
	\begin{multline}\label{optimal control leader-feedback}
		\hat{\hat{\alpha}}_t^l=\big[\tilde{H}_t\tilde{P}_t+\tilde{I}_t(I-\tilde{P}_t\tilde{E}_t)^{-1}\tilde{P}_t\big(\tilde{D}_t+\tilde{C}_t^\top\tilde{P}_t\big)\big]\tilde{X}_t\\
		+\big[\tilde{H}_t+\tilde{I}_t(I-\tilde{P}_t\tilde{E}_t)^{-1}\tilde{P}_t\tilde{C}_t^{\top}\big]\tilde{\varphi}_t+\tilde{I}_t(I-\tilde{P}_t\tilde{E}_t)^{-1}\tilde{P}_t\tilde{\sigma}_t.
	\end{multline}
\end{thm}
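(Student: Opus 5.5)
The plan has two parts: a decoupling verification showing the candidate built from $\tilde{P}$ and $\tilde{\varphi}$ solves the Hamiltonian FBSDE (\ref{FBSDE leader-augmented}), and a convexity argument giving optimality and uniqueness.

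\textbf{Decoupling verification.} Given the solution $\tilde{P}$ of (\ref{Riccati leader-augmented}) and $(\tilde{X},\tilde{\varphi})$ of (\ref{FBSDE leader-final}), I set $\tilde{Y}_t:=\tilde{P}_t\tilde{X}_t+\tilde{\varphi}_t$. I define $\tilde{Z}_t$ by (\ref{tilde Z}), which is legitimate because $I-\tilde{P}_t\tilde{E}_t$ is invertible. I then check that $(\tilde{X},\tilde{Y},\tilde{Z})$ solves (\ref{FBSDE leader-augmented}) by applying It\^o's formula to $\tilde{P}_t\tilde{X}_t+\tilde{\varphi}_t$.

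\begin{enumerate}
\item The diffusion coefficient of $\mathrm{d}\tilde{Y}$ equals $\tilde{P}_t(\tilde{D}_t\tilde{X}_t+\tilde{C}_t^\top\tilde{Y}_t+\tilde{E}_t\tilde{Z}_t+\tilde{\sigma}_t)$. Identity (\ref{tilde Z}) is exactly the algebraic rearrangement showing this equals $\tilde{Z}_t$.
\item In the drift, the coefficients of $\tilde{X}_t$ cancel by the Riccati equation (\ref{Riccati leader-augmented}).
\item The remaining terms cancel by the $\tilde{\varphi}$-equation in (\ref{FBSDE leader-final}).
\item The terminal condition $\tilde{Y}_T=\tilde{F}\tilde{X}_T$ follows from $\tilde{P}_T=\tilde{F}$ and $\tilde{\varphi}_T=0$.
\end{enumerate}

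\textbf{Translation back to the leader's system.} Reading off the components of $\tilde{X},\tilde{Y},\tilde{Z}$ gives $(\hat{\hat{X}}^l,-\psi^l)$, $(y^l,\hat{\hat{N}}^l)$ and $(z^l,0)$. I must check that the second block of $\tilde{Z}$ vanishes, so that $\hat{\hat{N}}^l$ and $\psi^l$ are deterministic-drift processes as required. I expect this to follow from the block structure: the $(2,\cdot)$ blocks of $\tilde{D},\tilde{C},\tilde{E}$ and $\tilde{\sigma}$ are zero. If it does not hold automatically, I will argue via uniqueness, using that the second components satisfy equations driven only by $\mathrm{d}t$.

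This gives a solution of (\ref{FBSDEs leader}) in which the control is given by (\ref{optimal control leader-by adjoint variables}), that is, by (\ref{optimal control leader-by augmented variables}). Substituting the expressions for $\tilde{Y}$ and $\tilde{Z}$ yields the feedback formula (\ref{optimal control leader-feedback}).

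\textbf{Optimality.} The perturbation argument in the proof of Theorem \ref{maximum principle leader problem} shows $\hat{J}^l(\hat{\hat{\alpha}}^l+\lambda\hat\beta)-\hat{J}^l(\hat{\hat{\alpha}}^l)$ is a quadratic in $\lambda$. Its linear coefficient vanishes because of (\ref{optimal control leader}), and its quadratic coefficient is nonnegative. Hence $\hat{\hat{\alpha}}^l$ is optimal, and the leader's problem is solvable.

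\textbf{Uniqueness.} Since $\hat{R}^l>0$ by (A1)--(A2), the quadratic coefficient is bounded below by $\frac{1}{2}\lambda^2\mathbb{E}\int_0^T\hat\beta_t^\top\hat{R}_t^l\hat\beta_t\,\mathrm{d}t$. This is strictly positive for any nonzero $\beta$. Therefore the cost is strictly convex in the control, and the minimizer is unique.

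The main obstacle is the drift-matching computation in the It\^o step together with the consistency of the zero second block of $\tilde{Z}$. I would handle it by substituting (\ref{tilde Z}) directly and grouping terms exactly as in (\ref{Riccati leader-augmented}) and (\ref{FBSDE leader-final}).
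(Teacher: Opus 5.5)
Your plan follows the paper's own route, made explicit. In the paper, the theorem rests only on the decoupling ansatz $\tilde Y=\tilde P\tilde X+\tilde\varphi$ written just before it, combined with Theorem \ref{maximum principle leader problem}. Your It\^o matching, the substitution giving (\ref{optimal control leader-feedback}), and the strict-convexity uniqueness are fine. However, the step you flagged as the main obstacle genuinely fails.

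Write $\tilde P=(\tilde P_{ij})_{i,j=1,2}$ conformally with $\tilde X=(\hat{\hat X}^l,-\psi^l)$. Let $\Sigma_t=\hat D^l_t\hat{\hat X}^l_t+\hat E^l_t\hat{\hat\alpha}^l_t+\hat\sigma^l_t$ be the diffusion coefficient of $\hat{\hat X}^l$. The zero second row-blocks of $\tilde D,\tilde C^\top,\tilde E,\tilde\sigma$ only make the \emph{vector} in the diffusion identity have zero second block; $\tilde P$ then mixes the blocks:
\[
\tilde Z_t=\tilde P_t\big(\tilde D_t\tilde X_t+\tilde C_t^\top\tilde Y_t+\tilde E_t\tilde Z_t+\tilde\sigma_t\big)=\tilde P_t\begin{pmatrix}\Sigma_t\\ 0\end{pmatrix},\qquad\text{so the second block of }\tilde Z_t\text{ is }\tilde P_{21}(t)\Sigma_t .
\]
This block is not zero in general. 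Take constant coefficients with $D^l=E^l=F^l=0$. Then $\tilde C=\tilde D=\tilde E=0$ and $\Sigma=(0,0,\sigma^l)^\top$, so the second block of $\tilde Z$ is $\pi\sigma^l$, where $\pi$ is the block of $\tilde P_{21}$ acting on the slot $X^l-\mathbb{E}[X^l]$. In (\ref{Riccati leader-augmented}) we have $\pi_T=0$, since $\hat H^l=(0,G^{f,13},0)$. At $t=T$ the only surviving term comes from $\tilde P\tilde B\tilde P$, which gives $\dot\pi_T=G^{f,13}B^l(R^l)^{-1}B^{l\top}G^{l,11}$. This term pairs $G^{f,13}$ in $\hat H^l$ with the block $G^{l,11}$ of $\hat G^l$ that links $\mathbb{E}[X^l]$ to $X^l-\mathbb{E}[X^l]$. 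So generically $\hat{\hat N}^l=\tilde Y^{(2)}$ acquires a nonzero $\mathrm{d}W^l$ term, contradicting its equation in (\ref{FBSDEs leader}). Your fallback cannot repair this. Uniqueness can only identify a solution of (\ref{FBSDEs leader}), if one exists, with the decoupled triple. It cannot force that triple to have a vanishing second $\tilde Z$-block, and here the block does not vanish.

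The underlying reason is that (\ref{FBSDE leader-augmented}) forgets the constraints built into the augmentation. The first two slots of $\hat X^l$ must be deterministic, and $\hat\alpha^l$ must have the form $(\mathbb{E}[\alpha^l],\alpha^l-\mathbb{E}[\alpha^l])$. The candidate's first slot is $-(R^l)^{-1}\big(B^{l\top}y^{l,(2)}+E^{l\top}z^{l,(3)}\big)$, where superscripts index the three slots. Here $y^{l,(2)}$ depends on $X^l-\mathbb{E}[X^l]$ through a block of $\tilde P_{11}$ whose terminal value is $G^{l,11}$. So the candidate is random in its ``mean'' slot and equals $\hat\alpha$ for no $\alpha\in\mathcal{U}^l$. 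The perturbation identity of Theorem \ref{maximum principle leader problem} only compares with $\hat\beta=(\mathbb{E}[\beta],\beta-\mathbb{E}[\beta])$, so it does not show that the candidate is an optimal admissible control.

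The paper has exactly the same hole: it simply declares $\tilde Z:=(z^l,0)$ and then solves the unconstrained system. Following the paper therefore won't close the gap; you need a genuinely additional argument. One option is to split the leader's problem into mean and fluctuation parts, in the mean-field LQ way. Taking expectations makes cross terms such as $2\,\mathbb{E}[X_T^l]^\top G^{l,11}(X_T^l-\mathbb{E}[X_T^l])$ drop out, and each part is then decoupled with its own Riccati equation. Another option is to impose hypotheses ensuring that $\tilde P$ respects this splitting. A minor point: you and the paper both need $R^l$ uniformly positive definite, for $(\hat R^l)^{-1}$ and for strict convexity, although (A1)--(A2) as written only give $R^l\ge 0$.
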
 

	\subsection{Stackelberg-Nash equilibrium}

By combining Theorems \ref{solution followers problem} and \ref{solution leader problem}, we obtain the following result.

\begin{thm}\label{solution of Stackelberg games}
	Assume that conditions (A1), (A2), (A3), and (A4) hold. Let $I+P_t^{f,u}E_t^f(R_t^f)^{-1}E_t^{f\top}$ and $I-\tilde{P}_t\tilde{E}_t$ be invertible. If the Riccati equations (\ref{Riccati leader}) and (\ref{Riccati leader-augmented}) and the forward-backward system (\ref{FBSDE leader-final}) admit unique solutions $\hat{P}^f$, $\tilde{P}$ and $(\tilde{X},\tilde{\varphi})$ respectively, then the problem admits a Stackelberg-Nash equilibrium $(\hat{\hat{\alpha}}^l,\hat{\hat{\alpha}}^f)$ given by 
	\begin{equation}\label{Stackelberg-Nash equilibrium}
		\begin{aligned}
			\hat{\hat{\alpha}}_t^l&=\big[\tilde{H}_t\tilde{P}_t+\tilde{I}_t(I-\tilde{P}_t\tilde{E}_t)^{-1}\tilde{P}_t\big(\tilde{D}_t+\tilde{C}_t^\top\tilde{P}_t\big)\big]\tilde{X}_t\\
			&\quad +\big[\tilde{H}_t+\tilde{I}_t(I-\tilde{P}_t\tilde{E}_t)^{-1}\tilde{P}_t\tilde{C}_t^{\top}\big]\tilde{\varphi}_t+\tilde{I}_t(I-\tilde{P}_t\tilde{E}_t)^{-1}\tilde{P}_t\tilde{\sigma}_t,\\
			\hat{\hat{\alpha}}_t^f&=-(R_t^f)^{-1}\big[B_t^{f\top}P_t^{f}+E_t^{f\top}(\hat R_t^f)^{-1}P_t^f(D_t^f-E_t^f(R_t^f)^{-1}B_t^{f\top}P_t^f)\big]\hat{\hat{X}}_t^{f,u}\\
			&\quad -(R_t^f)^{-1}\big[B_t^{f\top}-E_t^{f\top}(\hat R_t^f)^{-1}P_t^fE_t^f(R_t^f)^{-1}B_t^{f\top}\big]\hat{\hat{\varphi}}_t^{f,u}\\
			&\quad -(R_t^f)^{-1}E_t^{f\top}(\hat R_t^f)^{-1}P_t^f\big[F_t^fG\hat{\hat{X}}_t^{f,u}+\sigma_t^f\big],
		\end{aligned}
	\end{equation}
	where $(\hat{\hat{X}}^f,\hat{\hat{\varphi}}^f)$ is the unique solution of the following forward-backward system
	\begin{equation}\label{optimality system}
		\begin{cases}
			\mathrm{d}\hat{\hat{X}}_t^{f,u}=\big(\hat{A}_t^{f}\hat{\hat{X}}_t^{f,u}+\hat{B}_t^{f}\hat{\hat{\varphi}}_t^{f,u}+\hat{C}_t^{f}G\hat{\hat{X}}_t^{f,u}+\hat{b}_t^{f}\big)\mathrm{d}t\\
			\qquad\qquad +\big(\hat{D}_t^{f}\hat{\hat{X}}_t^{f,u}+\hat{E}_t^{f}\hat{\hat{\varphi}}_t^{f,u}+\hat{F}_t^{f}G\hat{\hat{X}}_t^{f,u}+\hat{\sigma}_t^{f}\big)\mathrm{d}W_t^{f,u},\\
			\mathrm{d}\hat{\hat{\varphi}}_t^{f,u}=\big(\hat{H}_t^{f}\hat{\hat{\varphi}}_t^{f,u}+\hat{I}_t^{f}G\hat{\hat{X}}_t^{f,u}+\hat{g}_t^{f}\big)\mathrm{d}t,\\
			\hat{\hat{X}}_0^{f,u}=x_0^{f,u},\quad \hat{\hat{\varphi}}_T^{f,u}=G^{f,12}G\hat{\hat{X}}_T^{f,u}+G^{f,13}X_T^*,
		\end{cases}
	\end{equation}
	and $X^*:=\begin{pmatrix}
		0 & I_{n_2} & 0
	\end{pmatrix}\hat{\hat{X}}^l$, $\hat{\hat{X}}^l:=\begin{pmatrix}
	I_{n_1+2n_2} & 0
	\end{pmatrix}\tilde{X}$.
\end{thm}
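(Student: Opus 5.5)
The plan is to assemble the equilibrium in two stages, following the hierarchy of the game, and to check that the composition is consistent.

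\emph{Followers' stage.} Fix an arbitrary leader control $\alpha^l\in\mathcal{U}^l$. By Theorem \ref{thm06} and Theorem \ref{solution followers problem}, under (A1)--(A3) and the invertibility of $\hat R_t^f=I+P_t^fE_t^f(R_t^f)^{-1}E_t^{f\top}$, the followers have a unique Nash equilibrium $\hat\alpha^f(\alpha^l)$. It has the state-feedback form (\ref{Nash equilibrium followers problem-state feedback}), where $(\hat X^f,\hat\varphi^f)$ solves (\ref{forward-backward system}). The leader's control enters this equilibrium only through the deterministic function $\mathbb{E}[X^l]$, which appears in the drift of $\hat\varphi^f$ and in its terminal value. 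Under (A4), Proposition \ref{leader's condition} lets us integrate (\ref{forward-backward system}) over $u$ to get the ODE system (\ref{FBODE}). Since the Riccati equation (\ref{Riccati leader}) has a solution $\hat P^f$, the decoupling (\ref{relation N and M}) shows that $(\hat M^f,\hat N^l)$ is uniquely determined by $\mathbb{E}[X^l]$. Hence the leader's reduced problem is exactly (\ref{leader's state equation-augmented}) with cost (\ref{cost of leader-augmented}). A small point to verify is that $\hat M^f$ equals the actual $M^f$ in the leader's dynamics; this follows from the exact law of large numbers (Theorem \ref{existence and uniqueness of solutions of state equation}).

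\emph{Leader's stage.} Given solvability of (\ref{Riccati leader-augmented}) and (\ref{FBSDE leader-final}) together with the invertibility of $I-\tilde P_t\tilde E_t$, I define $\tilde Y=\tilde P\tilde X+\tilde\varphi$ and $\tilde Z$ by (\ref{tilde Z}). I then verify by Itô's formula that $(\tilde X,\tilde Y,\tilde Z)$ solves (\ref{FBSDE leader-augmented}); this is a direct coefficient comparison using the Riccati equation. Unpacking the blocks, this gives a solution of the Hamiltonian system (\ref{FBSDEs leader}). The convexity argument in the proof of Theorem \ref{maximum principle leader problem}, which uses $\hat Q^l\ge0$, $\hat G^l\ge0$ and $\hat R^l>0$, then shows that the control $\hat{\hat\alpha}^l$ defined by (\ref{optimal control leader-by adjoint variables}) is optimal. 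Strict convexity in $\hat\alpha^l$ gives uniqueness. Substituting the expressions for $\tilde Y$ and $\tilde Z$ into (\ref{optimal control leader-by augmented variables}) yields the first line of (\ref{Stackelberg-Nash equilibrium}), i.e.\ Theorem \ref{solution leader problem}.

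\emph{Composition.} Plugging $\alpha^l=\hat{\hat\alpha}^l$ into the followers' stage gives $\mathbb{E}[X^l]=X^*$, the second block of $\hat{\hat X}^l$. It is deterministic, because $\hat{\hat X}^l$ is the first component of $\tilde X$ and the mean part of the decomposition (\ref{decompose leader's state}) is governed by an ODE. System (\ref{forward-backward system}) then becomes (\ref{optimality system}), which is uniquely solvable by Theorem \ref{solvability of graphon-aggregated FBSDE followers}. This gives $\hat{\hat\alpha}^f$ as stated.

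\emph{Main obstacle.} The delicate step is the leader's verification. The leader's objective is evaluated along the followers' equilibrium map $\alpha^l\mapsto\hat\alpha^f(\alpha^l)$, so one must confirm that this map is affine in $\alpha^l$. It is, because everything passes linearly through $\mathbb{E}[X^l]$ and the ODE system (\ref{FBODE leader}). Affineness makes the variation $\hat X^0$ in Theorem \ref{maximum principle leader problem} the correct first-order perturbation. In addition, the duality pairing must correctly track the forward--backward coupling in $\hat N$ through the adjoint $\psi$ with $\psi_0=0$.
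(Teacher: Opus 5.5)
Your three stages follow the paper's own route: its proof of this theorem is just the combination of Theorems \ref{solution followers problem} and \ref{solution leader problem}. However, the step that closes your composition does not hold. You assert that $X^*=(0\;I_{n_2}\;0)\hat{\hat X}^l$ is deterministic because the mean block of (\ref{decompose leader's state}) is governed by an ODE. That ODE is driven by the first block of the augmented control, which is meant to be $\mathbb{E}[\alpha^l]$. In the closed loop, however, (\ref{optimal control leader-by adjoint variables}) sets this block equal to $-(R_t^l)^{-1}(B_t^{l\top}y_{2,t}^l+E_t^{l\top}z_{3,t}^l)$, where $y^l=(y^l_1,y^l_2,y^l_3)$ is split conformally with $\hat X^l$. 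The driver of $y^l_2$ contains $Q_t^{l,11}X_t^l$, with $X^l$ the full random leader state (since $\hat Q^l$ mixes the mean and fluctuation blocks), as well as $D_t^{l\top}z^l_3$. Nothing makes this block deterministic, or the second block mean-zero. So $\hat{\hat\alpha}^l$ need not be of the form $(\mathbb{E}[\alpha^l],\alpha^l-\mathbb{E}[\alpha^l])$ with $\alpha^l\in\mathcal{U}^l$. The convexity argument of Theorem \ref{maximum principle leader problem} only shows that it minimizes the augmented cost over pairs of arbitrary $\mathcal{F}^l$-adapted processes, with strict convexity giving uniqueness there. That is a relaxation of the leader's problem, not the problem itself.

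A quick check shows the relaxation is genuinely different. If $Q^{f,13}=0$ and $G^{f,13}=0$, then $y^l_2-y^l_3$ solves a homogeneous linear BSDE with zero terminal value, so the two control blocks coincide. Since $\min\{a_1^\top R^{l}a_1+a_2^\top R^{l}a_2:\ a_1+a_2=\alpha\}=\frac12\alpha^\top R^{l}\alpha$, the control $(1\;1)\hat{\hat\alpha}^l$ is then optimal for the leader's LQ problem with $R^l$ replaced by $R^l/2$. Its first block is in general a random feedback of $X^l$. When $G^{f,13}$ or $Q^{f,13}$ is nonzero, a random $X^*$ would also make the inputs of (\ref{optimality system}) $\mathcal{F}^l$-measurable. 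That lies outside the followers' filtrations and outside Theorem \ref{solvability of graphon-aggregated FBSDE followers}.

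The missing idea is to keep the constraint in the variation. For $\hat\beta=(\mathbb{E}[\beta],\beta-\mathbb{E}[\beta])$, the first-order condition coming from (\ref{function of lambda}) is only $\mathbb{E}[v_1]=0$ and $v_2=\mathbb{E}[v_2]$, where $(v_1,v_2)=\hat R^l\hat{\hat\alpha}^l+\hat C^{l\top}y^l+\hat E^{l\top}z^l$. This leads to a mean-field type Hamiltonian system with separate feedbacks for the mean and the fluctuation, rather than to (\ref{FBSDE leader-augmented}). Once the leader's control is admissible, $X^*=\mathbb{E}[X^l]$ is deterministic by construction.

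A second, smaller gap is your affineness claim. $\hat P^f$ decouples $\hat N^f$ from $\hat M^f$ only for a given $\mathbb{E}[X^l]$, but $\mathbb{E}[X^l]$ is itself driven by $C^l\hat M^f$. So (\ref{leader's state equation-augmented}) remains a coupled two-point boundary value problem in $(\hat M^f,\mathbb{E}[X^l];\hat N^l)$. Its unique solvability for every $\alpha^l$ is what makes the followers' response well defined and affine, and what gives the variation $(\hat X^0,\hat N^0)$. It is not among the hypotheses, which only concern $\hat P^f$ and the closed-loop objects $\tilde P$ and $(\tilde X,\tilde\varphi)$.

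Both issues are inherited from Theorems \ref{thm06}, \ref{maximum principle leader problem} and \ref{solution leader problem}, on which the paper's one-line proof also rests. Still, in your write-up they are asserted rather than proved, and the determinism of $X^*$ is false in general.
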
 

	\section{Existence and uniqueness of solutions to graphon-aggregated FBSDEs}
	
	\subsection{Existence and uniqueness of solutions to graphon-aggregated FBSDEs}

In the followers' problem of the Stackelberg game, the existence and uniqueness of the followers' Nash equilibrium relies on the existence and uniqueness of solutions to the graphon-aggregated FBSDE (\ref{graphon-aggregated FBSDE followers}). In this subsection, we generalize the continuity method from classical FBSDE theory (\cite{Hu-Peng-95, Yong-97, Peng-Wu-99}) to discuss the existence and uniqueness of solutions to linear graphon-aggregated FBSDEs and their estimates.

Consider the general linear graphon-aggregated FBSDE:
\begin{equation}\label{GMFLFBSDE}
	\begin{cases}
		\mathrm{d}X_t^u=\big(A_t^{21,u}X_t^u+A_t^{22,u}Y_t^u+A_t^{23,u}Z_t^u+B_t^{2,u}GX_t^u+b_t^u\big)\mathrm{d}t,\\
		\qquad\qquad +\big(A_t^{31,u}X_t^u+A_t^{32,u}Y_t^u+A_t^{33,u}Z_t^u+B_t^{3,u}GX_t^u+\sigma_t^u\big)\mathrm{d}W_t^u,\\
		\mathrm{d}Y_t^u=\big(A_t^{11,u}X_t^u+A_t^{12,u}Y_t^u+A_t^{13,u}Z_t^u+B_t^{1,u}GX_t^u+g_t^u\big)\mathrm{d}t+Z_t^u\mathrm{d}W_t^u,\\
		X_0^u=x_0^u,\quad Y_T^u=G^{1,u}X_T^u+G^{2,u}GX_T^u+h^u,\quad u\in I,
	\end{cases}
\end{equation}
where $X,Y,Z$ are $\mathbb{R}^n$-valued processes, $G\in\mathcal{W}_0$, and
\begin{equation*}
	GX^u:=\int_I G(u,v)X^v\lambda(\mathrm{d}v),\quad u\in I.
\end{equation*}
Here, $W\equiv(W_t^u)_{u\in I,t\in[0,T]}$ is a family of essentially pairwise independent Brownian motions, where for each $u\in I$, $W^u$ is a one-dimensional standard Brownian motion. The filtration is defined by
\[
\mathcal{F}_t^u:=\sigma\{x_0^u,W_s^u,0\le s\le t\}\lor\mathcal{N},
\]
with $\mathcal{N}$ denoting the collection of all $P$-null sets.

\begin{Ass}[S1]
	For any $i,j\in\{1,2,3\}$, $A^{ij}\in L_\boxtimes^\infty(0,T;\mathbb{R}^n)$, the processes $b,\sigma,g\in L_{\boxtimes}^2(0,T;\mathbb{R}^n)$, the terminal condition $h\in L_{\boxtimes_T}^2(\mathbb{R}^n)$.
\end{Ass}

\begin{Ass}[S2]
	There exists a constant $K_1>0$ such that for any $t\in[0,T]$ and $x,y,z\in L^2_{\mathcal{I}}(\mathbb{R}^n)$, the following inequality holds:
	\begin{multline}\label{monotonicity condition-1}
		\int_I\left\{\begin{pmatrix}
			x^u \\ y^u \\ z^u
		\end{pmatrix}^\top\begin{pmatrix}
			A_t^{11,u} & A_t^{12,u} & A_t^{13,u} \\
			A_t^{21,u} & A_t^{22,u} & A_t^{23,u} \\
			A_t^{31,u} & A_t^{32,u} & A_t^{33,u}
		\end{pmatrix}\begin{pmatrix}
			x^u \\ y^u \\ z^u
		\end{pmatrix}+\begin{pmatrix}
			x^u \\ y^u \\ z^u
		\end{pmatrix}^\top\begin{pmatrix}
			B_t^{1,u} \\ B_t^{2,u} \\ B_t^{3,u}
		\end{pmatrix}Gx^u\right\}\lambda(\mathrm{d}u)\\
		\le -K_1\int_I\big\{ |x^u|^2+|y^u|^2+|z^u|^2 \big\}\lambda(\mathrm{d}u).
	\end{multline}
	Additionally, for any $x\in L_{\mathcal{I}}^2(\mathbb{R}^n)$,
	\begin{equation}\label{monotonicity condition-2}
		\int_I x^{u\top}G^{1,u}x^u\lambda(\mathrm{d}u)\ge 0,\quad \int_I x^{u\top}G^{2,u}Gx^{u}\lambda(\mathrm{d}u)\ge 0.
	\end{equation}
\end{Ass}

\begin{thm}\label{existence and uniqueness of GMFLFBSDE}
	Assume that conditions (S1) and (S2) hold. Then the linear graphon-aggregated FBSDE (\ref{GMFLFBSDE}) admits a unique solution, and the following estimate is satisfied:
	\begin{equation}\label{estimate of GMFLFBSDE}
        \begin{aligned}
		&\mathbb{E}^\boxtimes\biggl[ \sup_{0\le t\le T}|X_t|^2+\sup_{0\le t\le T}|Y_t|^2+\int_0^T|Z_t|^2\mathrm{d}t \biggr]\\
        &\qquad \le K\mathbb{E}^\boxtimes\biggl[ |x_0|^2+|h|^2+\int_0^T\{|b_t|^2+|\sigma_t|^2+|g_t|^2\}\mathrm{d}t \biggr],
	    \end{aligned}
    \end{equation}
for some constant $K>0$.
\end{thm}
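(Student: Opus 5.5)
We prove the result by the continuation method of Hu--Peng and Peng--Wu, carried out on the rich Fubini extension $(\varOmega\times I,\mathcal{F}\boxtimes\mathcal{I},P\boxtimes\lambda)$. All estimates are taken in $\mathbb{E}^\boxtimes$. The graphon operator $G$ is bounded on $L_\boxtimes^2$ with norm at most $\|G\|_\infty\le1$ by Proposition \ref{extension of domain of graphon operator}, so every aggregation term is controlled by the corresponding $\mathbb{H}^2_\boxtimes$ norm.

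For $\rho\in[0,1]$ we introduce the family of FBSDEs obtained from (\ref{GMFLFBSDE}) by keeping the coefficients in the form
\[
\rho\,\Phi+(1-\rho)\,\Phi^0+\text{free terms},
\]
where $\Phi=(A^{ij},B^i)$ and $\Phi^0$ is the decoupled monotone system with drifts $-X$ for $Y$, $-Y$ for $X$, and $-Z$ for the diffusion of $X$. The terminal condition is $Y_T=\rho(G^1X_T+G^2GX_T)+h$. Arbitrary free terms $(b_0,\sigma_0,g_0,h_0)$ are added to each equation.

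At $\rho=0$ the system reads $\mathrm{d}Y=(-X+g_0)\mathrm{d}t+Z\mathrm{d}W$ and $\mathrm{d}X=(-Y+b_0)\mathrm{d}t+(-Z+\sigma_0)\mathrm{d}W$, with no graphon term. For each $u$ this is a classical linear FBSDE (equivalently, a single FBSDE driven by the Brownian motion $W$ on the Fubini extension, as in Proposition \ref{wellposedness of SDE}). It satisfies the Hu--Peng monotonicity condition with constant $1$, so it is uniquely solvable in $\mathbb{S}^2_\boxtimes\times\mathbb{S}^2_\boxtimes\times\mathbb{H}^2_\boxtimes$.

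The key a priori step is as follows. Take two solutions at level $\rho$ with different free terms, write $\hat X,\hat Y,\hat Z$ for their differences, and apply It\^o's formula to $\langle\hat X_t^u,\hat Y_t^u\rangle$. Then take $\mathbb{E}^\boxtimes$, which by the Fubini property equals $\int_I\mathbb{E}[\cdot]\lambda(\mathrm{d}u)$. The integrand is a convex combination of the quadratic form in (S2), evaluated pathwise with $x=\hat X_t(\omega,\cdot)$ and similarly for $y,z$, and of the form $-(|x|^2+|y|^2+|z|^2)$. Applying (S2) for a.e.\ $\omega$ and integrating, the dt-part is at most $-\min(K_1,1)\,\mathbb{E}^\boxtimes\int_0^T(|\hat X|^2+|\hat Y|^2+|\hat Z|^2)\mathrm{d}t$. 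The terminal term $\rho\,\mathbb{E}^\boxtimes[\hat X_T^\top G^1\hat X_T+\hat X_T^\top G^2G\hat X_T]$ is $\ge0$ by (\ref{monotonicity condition-2}).

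The point that needs care here is that (S2) is stated for deterministic $x\in L^2_{\mathcal{I}}$, whereas $G\hat X$ is applied $\omega$-wise. We handle this by evaluating for fixed $\omega$: $G\hat X_t(\omega)^u=\int_IG(u,v)\hat X_t^v(\omega)\lambda(\mathrm{d}v)$, and Definition \ref{def Fubini extension} guarantees $\hat X_t(\omega,\cdot)\in L^2_{\mathcal{I}}$ for a.e.\ $\omega$. The monotonicity then yields a bound on $\mathbb{E}^\boxtimes\int(|\hat X|^2+|\hat Y|^2+|\hat Z|^2)$ by the perturbation terms paired with $\hat X,\hat Y$ and the initial/terminal data. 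Combining this with standard SDE and BSDE estimates, treating $G\hat X$ as an $L^2$ input via the operator bound, and using Gronwall and Young, we obtain the Lipschitz-type estimate
\[
\|(\hat X,\hat Y,\hat Z)\|^2\le C\big(|\hat x_0|^2+|\hat h|^2+\textstyle\int|\hat b|^2+|\hat\sigma|^2+|\hat g|^2\big),
\]
with $C$ independent of $\rho$. Taking the second solution to be zero gives (\ref{estimate of GMFLFBSDE}), and the estimate also gives uniqueness.

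The continuation step then proceeds as follows. Suppose the $\rho_0$-system is solvable for all free terms. For a given solution candidate, move the difference $\delta(\Phi-\Phi^0)$ evaluated at that candidate, and the terminal $\delta(G^1X_T+G^2GX_T)$, into the free terms, and solve at level $\rho_0$. This defines a map on $\mathbb{S}^2_\boxtimes\times\mathbb{S}^2_\boxtimes\times\mathbb{H}^2_\boxtimes$. Because $C$ is independent of $\rho_0$ and the coefficients are bounded (with $\|G\|\le1$), the map is a contraction for $\delta\le\delta_0$, where $\delta_0$ is independent of $\rho_0$. Measurability and adaptedness are preserved, since each step solves a linear equation on the Fubini extension with $\mathcal{F}^u$-adapted data. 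After finitely many steps we reach $\rho=1$, which proves existence. The main obstacle is the pathwise use of (S2) with random $x$ together with the $\mathbb{S}^2$ sup-estimates when $GX$ is random; both are resolved by Fubini and the $L^2_\boxtimes$ boundedness of $G$.
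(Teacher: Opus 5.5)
Your proposal is correct and follows the same continuation-method route as the paper. Both build a homotopy from a graphon-free, trivially monotone base system to (\ref{GMFLFBSDE}), use (S2) to control the It\^o expansion of $\langle \hat X,\hat Y\rangle$, and take contraction steps of length $\delta_0$ independent of the starting parameter. The differences are minor: the paper's base system uses $K_1$ and terminal condition $Y_T=X_T+h$, so it decouples explicitly via $\tilde Y=Y-X$, and the paper proves uniqueness separately in Lemma \ref{uniqueness of GMFLFBSDE}; you take $Y_T=h$, cite Hu--Peng for the base case, and obtain uniqueness, the estimate (\ref{estimate of GMFLFBSDE}), and the contraction from one Lipschitz bound uniform in $\rho$, which also controls $\hat X_T$ via the forward SDE estimate more transparently than the paper's Lemma \ref{estimate of GMFLFBSDE alpha delta}.
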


Before proving Theorem \ref{existence and uniqueness of GMFLFBSDE}, we establish several lemmas.

\begin{lemma}\label{uniqueness of GMFLFBSDE}
	Under assumptions (S1) and (S2), if a solution to equation (\ref{GMFLFBSDE}) exists, it is unique.
\end{lemma}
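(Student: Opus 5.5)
Take two solutions $(X^1,Y^1,Z^1)$ and $(X^2,Y^2,Z^2)$ of (\ref{GMFLFBSDE}) and set $\hat X=X^1-X^2$, $\hat Y=Y^1-Y^2$, $\hat Z=Z^1-Z^2$. By linearity, $(\hat X,\hat Y,\hat Z)$ solves (\ref{GMFLFBSDE}) with $x_0=0$, $h=0$ and $b=\sigma=g=0$; in particular $\hat Y_T^u=G^{1,u}\hat X_T^u+G^{2,u}G\hat X_T^u$ and $\hat X_0^u=0$. I will run the classical duality/monotonicity argument of Hu--Peng and Peng--Wu. For each fixed $u$, apply It\^o's formula to $\langle \hat X_t^u,\hat Y_t^u\rangle$ (both are driven by the same $W^u$) and take $\mathbb{E}$:
\begin{equation*}
\mathbb{E}\big[\hat X_T^{u\top}\hat Y_T^u\big]=\mathbb{E}\int_0^T\Big\{\hat X_t^{u\top}(\mathrm{d}\hat Y\text{-drift})+\hat Y_t^{u\top}(\mathrm{d}\hat X\text{-drift})+\hat Z_t^{u\top}(\mathrm{d}\hat X\text{-diffusion})\Big\}\mathrm{d}t .
\end{equation*}
After regrouping, the integrand is exactly the quadratic form $(\hat X,\hat Y,\hat Z)^\top(A^{ij})(\hat X,\hat Y,\hat Z)+(\hat X,\hat Y,\hat Z)^\top(B^1,B^2,B^3)^\top G\hat X$ appearing in (\ref{monotonicity condition-1}). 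The stochastic integrals are true martingales because the processes lie in $\mathbb{S}^2_\boxtimes\times\mathbb{S}^2_\boxtimes\times\mathbb{H}^2_\boxtimes$, so their expectations vanish. The initial term drops out since $\hat X_0=0$.

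Next integrate over $u\in I$. The rich Fubini extension (Definition \ref{def Fubini extension}) lets me write $\int_I\mathbb{E}[\cdot]\lambda(\mathrm{d}u)=\mathbb{E}^\boxtimes[\cdot]=\mathbb{E}\int_I[\cdot]\lambda(\mathrm{d}u)$ and swap this with the $\mathrm{d}t$ integral. For $\lambda$-a.e. $u$ the identity then holds with each side integrable, and Proposition \ref{extension of domain of graphon operator} guarantees $G\hat X\in L^2_\boxtimes$, so all cross terms are integrable.

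Now use (S2). I apply (\ref{monotonicity condition-1}) pathwise, for fixed $(\omega,t)$, to the functions $x=\hat X_t(\omega)$, $y=\hat Y_t(\omega)$, $z=\hat Z_t(\omega)$, which lie in $L^2_{\mathcal I}$ for a.e. $(\omega,t)$ by Fubini. Here $G\hat X_t^u(\omega)=\int_I G(u,v)\hat X_t^v(\omega)\lambda(\mathrm{d}v)$ is exactly the graphon operator applied to $x$, so the pointwise condition applies. Combining this with the terminal condition gives
\begin{equation*}
0\le \mathbb{E}\int_I\big\{\hat X_T^{u\top}G^{1,u}\hat X_T^u+\hat X_T^{u\top}G^{2,u}G\hat X_T^u\big\}\lambda(\mathrm{d}u)\le -K_1\,\mathbb{E}^\boxtimes\int_0^T\big\{|\hat X_t|^2+|\hat Y_t|^2+|\hat Z_t|^2\big\}\mathrm{d}t ,
\end{equation*}
where the left inequality is (\ref{monotonicity condition-2}), applied $\omega$-wise. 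Hence $\hat X=\hat Y=\hat Z=0$ in $\mathbb{H}^2_\boxtimes$. Continuity in $t$ then upgrades this to $\mathbb{E}^\boxtimes\sup_t|\hat X_t|^2=0$, and the same holds for $\hat Y$ via the equations, or simply via uniqueness of the BSDE with zero data.

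I expect the main obstacle to be the measurability and integrability bookkeeping: first justifying the exchange of $\mathbb{E}$, $\int_I$ and $\int_0^T$ on the Fubini extension, and second checking that applying (S2) $\omega$-by-$\omega$ is legitimate. That second point requires that $u\mapsto\hat X_t^u(\omega)$ is $\mathcal I$-measurable and square integrable for a.e. $(\omega,t)$, which follows from condition (2) of Definition \ref{def Fubini extension}. The algebra of the It\^o expansion itself is routine.
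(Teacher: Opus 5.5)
Your proposal is correct and follows the paper's own argument. Both apply It\^o's formula to $\hat X^{u\top}\hat Y^u$, integrate over $u$ on the Fubini extension, use (S2) to bound the drift by $-K_1(|\hat X|^2+|\hat Y|^2+|\hat Z|^2)$, and use nonnegativity of the terminal form $\hat X_T^{\top}G^1\hat X_T+\hat X_T^{\top}G^2G\hat X_T$ to force the difference to vanish. The paper leaves implicit the bookkeeping you add: the true-martingale property of the stochastic integrals, applying (S2) $\omega$-wise, and the continuity upgrade.
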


\begin{proof}
	Assume, for contradiction, that the solution is not unique. Let $(X^1,Y^1,Z^1)$ and $(X^2,Y^2,Z^2)$ be two distinct solutions, and denote their difference by $(\varDelta X,\varDelta Y,\varDelta Z)$. Applying Itô's formula to $\mathrm{d}(\varDelta X_t^{u\top}\varDelta Y_t^u)$ and using condition (\ref{monotonicity condition-1}), integration yields
	\begin{multline*}
		\mathbb{E}^\boxtimes\big[\varDelta X_T^\top\varDelta Y_T\big]\\
		=\mathbb{E}^\boxtimes\left[ \int_0^T \left\{\begin{pmatrix}
			\varDelta X_t \\ \varDelta Y_t \\ \varDelta Z_t
		\end{pmatrix}^\top\begin{pmatrix}
			A_t^{11} & A_t^{12} & A_t^{13} \\
			A_t^{21} & A_t^{22} & A_t^{23} \\
			A_t^{31} & A_t^{32} & A_t^{33}
		\end{pmatrix}\begin{pmatrix}
			\varDelta X_t \\ \varDelta Y_t \\ \varDelta Z_t
		\end{pmatrix}+\begin{pmatrix}
			\varDelta X_t \\ \varDelta Y_t \\ \varDelta Z_t
		\end{pmatrix}^\top\begin{pmatrix}
			B_t^{1} \\ B_t^{2} \\ B_t^{3}
		\end{pmatrix}G\varDelta X_t\right\}\mathrm{d}t \right]\\
		\le -K_1\mathbb{E}^\boxtimes\biggl[ \int_0^T \{|\varDelta X_t|^2+|\varDelta Y_t|^2+|\varDelta Z_t|^2\}\mathrm{d}t \biggr].
	\end{multline*}
	By condition (\ref{monotonicity condition-2}),
	\begin{equation*}
	    \begin{aligned}	
          0&\le \mathbb{E}^\boxtimes \big[\varDelta X_T^\top G^1 \varDelta X_T+\varDelta X_T^\top G^2 G\varDelta X_T\big]\\
           &\quad +K_1\mathbb{E}^\boxtimes\biggl[ \int_0^T\{|\varDelta X_t|^2+|\varDelta Y_t|^2+|\varDelta Z_t|^2\}\mathrm{d}t  \biggr]\le 0,
	    \end{aligned}
    \end{equation*}
	which implies
	\begin{equation*}
		(\varDelta X,\varDelta Y,\varDelta Z)=0,\quad P\boxtimes\lambda\text{-}a.e.\qedhere
	\end{equation*}
\end{proof}

To establish existence and derive estimates, consider the parameterized equation:
\begin{equation}\label{GMFLFBSDE parameterized alpha}
	\begin{cases}
		\mathrm{d}X_t^u=\big\{-(1-\alpha)K_1Y_t^u+\alpha\big(A_t^{21,u}X_t^u+A_t^{22,u}Y_t^u+A_t^{23,u}Z_t^u\\
        \qquad\qquad +B_t^{2,u}GX_t^u\big)+b_t^u\big\}\mathrm{d}t+\big\{-(1-\alpha)K_1Z_t^u\\
		\qquad\qquad +\alpha\big(A_t^{31,u}X_t^u+A_t^{32,u}Y_t^u+A_t^{33,u}Z_t^u+B_t^{3,u}GX_t^u\big)+\sigma_t^u\big\}\mathrm{d}W_t^u,\\
		\mathrm{d}Y_t^u=\big\{-(1-\alpha)K_1X_t^u+\alpha\big(A_t^{11,u}X_t^u+A_t^{12,u}Y_t^u+A_t^{13,u}Z_t^u\\
        \qquad\qquad +B_t^{1,u}GX_t^u\big)+g_t^u\big\}\mathrm{d}t+Z_t^u\mathrm{d}W_t^u,\\
		X_0^u=x_0^u,\quad Y_T^u=(1-\alpha)X_T^u+\alpha G^{1,u}X_T^u+\alpha G^{2,u}GX_T^u+h^u,\quad u\in I,
	\end{cases}
\end{equation}
where $\alpha\in [0,1]$, and $b,\sigma,g\in L_\boxtimes^2(0,T;\mathbb{R}^n)$.

\begin{lemma}\label{existence and uniqueness of GMFLFBSDE parameterized alpha=0}
	When $\alpha=0$, the equation (\ref{GMFLFBSDE parameterized alpha}) reduces to:
	\begin{equation}\label{GMFLFBSDE parameterized alpha=0}
		\begin{cases}
			\mathrm{d}X_t^u=(-K_1Y_t^u+b_t^u)\mathrm{d}t+(-K_1Z_t^u+\sigma_t^u)\mathrm{d}W_t^u,\\
			\mathrm{d}Y_t^u=(-K_1X_t^u+g_t^u)\mathrm{d}t+Z_t^u\mathrm{d}W_t^u,\\
			X_0^u=x_0^u,\quad Y_T^u=X_T^u+h^u,\quad u\in I.
		\end{cases}
	\end{equation}
	For any $b,\sigma,g\in L_{\boxtimes}^2(0,T;\mathbb{R}^n)$, (\ref{GMFLFBSDE parameterized alpha=0}) admits a unique solution $(X,Y,Z)\in \mathbb{S}^2_\boxtimes(\mathbb{R}^n)\times\mathbb{S}^2_{\boxtimes}(\mathbb{R}^n)\times\mathbb{H}^2_{\boxtimes}(\mathbb{R}^n)$, and the following estimate holds:
	\begin{equation}\label{estimate of GMFLFBSDE parameterized alpha=0}
        \begin{aligned}
		&\mathbb{E}^\boxtimes\biggl[ \sup_{0\le t\le T}|X_t|^2+\sup_{0\le t\le T}|Y_t|^2+\int_0^T |Z_t|^2\mathrm{d}t \biggr]\\
        &\quad \le K\mathbb{E}^\boxtimes\biggl[ |x_0|^2+|h|^2+ \int_0^T\{|b_t|^2+|\sigma_t|^2+|g_t|^2\}\mathrm{d}t \biggr].
	    \end{aligned}
    \end{equation}
\end{lemma}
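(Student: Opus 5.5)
Equation (\ref{GMFLFBSDE parameterized alpha=0}) contains no graphon aggregation term, so I will treat it $u$ by $u$. I will view it as a classical linear FBSDE driven by the single Brownian motion $W$ on the rich Fubini extension $(\varOmega\times I,\mathcal{F}\boxtimes\mathcal{I},P\boxtimes\lambda)$, exactly as in the proof of Proposition \ref{wellposedness of SDE}. The coefficient matrix is
\[
\begin{pmatrix} -K_1 & 0 & 0\\ 0 & -K_1 & 0\\ 0&0&-K_1\end{pmatrix}
\]
in the $(x,y,z)$ ordering of (S2), and the terminal map is $Y_T=X_T+h$. Hence the classical monotonicity condition of Hu--Peng/Peng--Wu holds with constant $K_1$ and a nonnegative terminal coefficient. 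By \cite{Hu-Peng-95,Peng-Wu-99}, together with the uniqueness already obtained in Lemma \ref{uniqueness of GMFLFBSDE}, there is a unique adapted solution.

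To make existence constructive and keep the filtration $\{\mathcal{F}^u_t\}$ for each $u$ explicit, I would rather decouple. First I set $\bar Y=Y-X$. Then $\mathrm{d}\bar Y_t=(K_1\bar Y_t+g_t-b_t)\mathrm{d}t+(Z_t+K_1Z_t-\sigma_t)\mathrm{d}W_t$ after substituting $\mathrm{d}X$ (with $\mathrm{d}Y-\mathrm{d}X=(-K_1X+g+K_1Y-b)\mathrm{d}t+(Z+K_1Z-\sigma)\mathrm{d}W$). This is a linear BSDE in $(\bar Y,\bar Z)$ with $\bar Z:=(1+K_1)Z-\sigma$ and terminal value $\bar Y_T=h$. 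For each $u$ it has a unique solution by classical BSDE theory, and joint $\mathcal{F}\boxtimes\mathcal{I}$-measurability follows by solving it on the Fubini extension. I then recover $Z=(\bar Z+\sigma)/(1+K_1)$, and $X$ solves the forward linear SDE $\mathrm{d}X=(-K_1X-K_1\bar Y+b)\mathrm{d}t+(-K_1Z+\sigma)\mathrm{d}W$ with $X_0=x_0$. Finally $Y=\bar Y+X$. Each step uses standard linear SDE/BSDE well-posedness, so $(X,Y,Z)\in\mathbb{S}^2_\boxtimes\times\mathbb{S}^2_\boxtimes\times\mathbb{H}^2_\boxtimes$.

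The estimate (\ref{estimate of GMFLFBSDE parameterized alpha=0}) then follows by chaining standard a priori bounds. The BSDE gives
\[
\mathbb{E}^\boxtimes\Big[\sup_t|\bar Y_t|^2+\int_0^T|\bar Z_t|^2\mathrm{d}t\Big]\le K\,\mathbb{E}^\boxtimes\Big[|h|^2+\int_0^T\{|b_t|^2+|g_t|^2+|\sigma_t|^2\}\mathrm{d}t\Big],
\]
and this controls $Z$. The SDE estimate together with BDG then bounds $\mathbb{E}^\boxtimes[\sup_t|X_t|^2]$ by $|x_0|^2$ plus the same quantities, and $Y=\bar Y+X$ completes the bound. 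All expectations are taken under $P\boxtimes\lambda$, and the Fubini property (Definition \ref{def Fubini extension}) justifies passing between $\mathbb{E}^\boxtimes$ and $\int_I\mathbb{E}[\cdot]\,\lambda(\mathrm{d}u)$.

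The main obstacle is measurability. I must check that solving on the Fubini extension yields processes with each $u$-section adapted to $\mathcal{F}^u$, so that the solution lies in $L^2_\boxtimes$. I expect this to hold, as in Proposition \ref{wellposedness of SDE}, because the solutions are constructed by Picard iteration and martingale representation, both of which preserve $\mathcal{F}\boxtimes\mathcal{I}$-measurability and the sectionwise filtration.
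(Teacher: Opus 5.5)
Your proposal is correct and matches the paper's proof: the paper also sets $\tilde Y=Y-X$ and $\tilde Z=(K_1+1)Z-\sigma$, solves the linear BSDE $\mathrm{d}\tilde Y_t=(K_1\tilde Y_t+g_t-b_t)\mathrm{d}t+\tilde Z_t\mathrm{d}W_t$ with $\tilde Y_T=h$, then solves the forward linear SDE for $X$, and chains the BSDE and SDE estimates. Your explicit use of $\sigma$ when recovering $Z$ from $\tilde Z$, and your remark on sectionwise adaptedness on the Fubini extension, are extra care the paper leaves implicit.
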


\begin{proof}
	Define $\tilde{Y}_t^u:=Y_t^u-X_t^u$ and $\tilde{Z}_t^u:=(K_1+1)Z_t^u-\sigma_t^u$. By classical BSDE theory, the equation
	\begin{equation}\label{BSDE}
		\begin{cases}
			\mathrm{d}\tilde{Y}_t^u=(K_1\tilde{Y}_t^u+g_t^u-b_t^u)\mathrm{d}t+\tilde{Z}_t^u\mathrm{d}W_t^u,\\
			\tilde{Y}_T^u=h^u,\quad u\in I,
		\end{cases}
	\end{equation}
	has a unique solution $(Y,Z)\in\mathbb{S}_\boxtimes^2(\mathbb{R}^n)\times\mathbb{H}_\boxtimes^2(\mathbb{R}^n)$, with the estimate
	\begin{equation}\label{estimate of BSDE}
		\mathbb{E}^\boxtimes\biggl[ \sup_{0\le t\le T}|Y_t|^2+\int_0^T |Z_t|^2\mathrm{d}t \biggr]\le K\mathbb{E}^\boxtimes\biggl[ |h|^2+\int_0^T\{|b_t|^2+|g_t|^2\}\mathrm{d}t \biggr].
	\end{equation}
	Substituting (\ref{BSDE}) into the forward equation of (\ref{GMFLFBSDE parameterized alpha=0}) yields
	\begin{equation}\label{SDE}
		\begin{cases}
			\mathrm{d}X_t^u=(-K_1X_t^u-K_1\tilde{Y}_t^u+b_t^u)\mathrm{d}t+(-K_1Z_t^u+\sigma_t^u)\mathrm{d}W_t^u,\\
			X_0^u=x_0^u,\quad u\in I,
		\end{cases}
	\end{equation}
	which, by classical SDE theory, admits a unique solution $X\in\mathbb{S}_\boxtimes^2(\mathbb{R}^n)$, satisfying
	\begin{equation}\label{estimate of SDE}
		\mathbb{E}^\boxtimes\biggl[ \sup_{0\le t\le T}|X_t|^2 \biggr]\le K\mathbb{E}^\boxtimes\biggl[ |x_0|^2+\int_0^T\{|\tilde{Y}_t|^2+|\tilde{Z}_t|^2+|b_t|^2+|\sigma_t|^2\}\mathrm{d}t \biggr].
	\end{equation}
	Thus, $(X,Y,Z)$ exists and is unique, and inequality (\ref{estimate of GMFLFBSDE parameterized alpha=0}) follows from estimates (\ref{estimate of BSDE}) and (\ref{estimate of SDE}).
\end{proof}

\begin{lemma}\label{existence and uniqueness of SDE graphon aggregation}
	Suppose $b,\sigma:\varOmega\times I\times[0,T]\times\mathbb{R}^n\times\mathbb{R}^n\to\mathbb{R}^n$ satisfy
	\begin{gather*}
		b(\cdot,0,0),\sigma(\cdot,0,0)\in L_{\boxtimes}^2(0,T;\mathbb{R}^n),\\
		|b(t,x_1,y_1)-b(t,x_2,y_2)|\le L\big( |x_1-x_2|+|y_1-y_2| \big),\quad t\in[0,T],x_1,x_2,y_1,y_2\in\mathbb{R}^n,
	\end{gather*}
	and the SDE with graphon aggregation term
	\begin{equation}\label{SDE graphon aggregation}
		\begin{cases}
			\mathrm{d}X_t^u=b^u(t,X_t^u,GX_t^u)\mathrm{d}t+\sigma^u(t,X_t^u,GX_t^u)\mathrm{d}W_t^u,\quad u\in I,\\
			X_0^u=x_0^u,
		\end{cases}
	\end{equation}
	admits a unique solution $X\in\mathbb{H}_\boxtimes^2(\mathbb{R}^n)$. Then $X\in\mathbb{S}_\boxtimes^2(\mathbb{R}^n)$, and the following estimate holds:
	\begin{equation*}
		\mathbb{E}^\boxtimes\biggl[\sup_{0\le t\le T}|X_t|^2\biggr]\le K\mathbb{E}^\boxtimes\biggl[ |x_0|^2+\int_0^T \{|b(t,0,0)|^2+|\sigma(t,0,0)|^2\}\mathrm{d}t \biggr].
	\end{equation*}
\end{lemma}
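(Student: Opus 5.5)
We will prove the estimate by the standard Itô–BDG–Gronwall scheme, carried out on the rich Fubini extension and closed with the contraction property of the graphon operator from Proposition \ref{extension of domain of graphon operator}. The only twist relative to classical SDEs is the term $GX^u$. The key observation is that, $\lambda$-a.e. in $u$ and for every $t$, $GX_t^u$ is controlled in mean square by $\mathbb{E}^\boxtimes[|X_t|^2]$:
\begin{equation*}
\mathbb{E}^\boxtimes\big[|GX_t|^2\big]\le \|G\|_\infty^2\,\mathbb{E}^\boxtimes\big[|X_t|^2\big]\le \mathbb{E}^\boxtimes\big[|X_t|^2\big].
\end{equation*}
We implicitly assume the same Lipschitz bound for $\sigma$ as for $b$, which is clearly intended.

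First, since $X\in\mathbb{H}^2_\boxtimes(\mathbb{R}^n)$, Proposition \ref{extension of domain of graphon operator} applied for a.e.\ $t$ gives $GX\in\mathbb{H}^2_\boxtimes(\mathbb{R}^n)$. By the Lipschitz property,
\begin{equation*}
|b^u(t,X_t^u,GX_t^u)|^2\le 3\big(|b^u(t,0,0)|^2+L^2|X_t^u|^2+L^2|GX_t^u|^2\big),
\end{equation*}
and the same holds for $\sigma$. Hence both coefficients evaluated along the solution lie in $\mathbb{H}^2_\boxtimes$.

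Next, for each fixed $u$ (a.e.), we write
\begin{equation*}
X_t^u=x_0^u+\int_0^t b^u\,\mathrm{d}s+\int_0^t\sigma^u\,\mathrm{d}W_s^u.
\end{equation*}
Taking $\sup_{s\le t}$, squaring, applying Cauchy–Schwarz to the $\mathrm{d}s$ integral and the Burkholder–Davis–Gundy (or Doob) inequality to the stochastic integral with respect to $W^u$ under $\mathcal{F}^u_t$, we get a bound for $\mathbb{E}[\sup_{s\le t}|X_s^u|^2]$. We then integrate in $u$ using the Fubini property of Definition \ref{def Fubini extension}; measurability of $u\mapsto\mathbb{E}[\sup_{s\le t}|X_s^u|^2]$ follows by approximating the sup over rationals, using path continuity. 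This yields
\begin{equation*}
\mathbb{E}^\boxtimes\Big[\sup_{s\le t}|X_s|^2\Big]\le C\,\mathbb{E}^\boxtimes\Big[|x_0|^2+\int_0^T\{|b(s,0,0)|^2+|\sigma(s,0,0)|^2\}\mathrm{d}s\Big]+C\int_0^t\mathbb{E}^\boxtimes\big[|X_s|^2+|GX_s|^2\big]\mathrm{d}s,
\end{equation*}
with $C$ depending on $L$ and $T$ only. In particular, the right-hand side is finite, so $X\in\mathbb{S}^2_\boxtimes(\mathbb{R}^n)$. Continuity of paths comes from the integral representation.

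Finally, we bound $\mathbb{E}^\boxtimes[|GX_s|^2]\le \mathbb{E}^\boxtimes[|X_s|^2]\le \mathbb{E}^\boxtimes[\sup_{r\le s}|X_r|^2]$ and apply Gronwall's lemma to $\phi(t):=\mathbb{E}^\boxtimes[\sup_{s\le t}|X_s|^2]$, which is finite by the previous step. This gives the claimed estimate with $K=Ce^{2CT}$.

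The main obstacle is purely technical: justifying the $u$-wise use of BDG together with the interchange of the $u$-integral and expectations on $\mathcal{F}\boxtimes\mathcal{I}$. In particular, one must check that $GX_t^u$ is $\mathcal{F}^u_t$-adapted, or at least that the stochastic integral $\int\sigma^u\,\mathrm{d}W^u$ is well defined. We would handle this by viewing $W$ as a Brownian motion on the Fubini extension, exactly as in the proof of Proposition \ref{wellposedness of SDE}, so that the classical estimates apply directly on $(\varOmega\times I,\mathcal{F}\boxtimes\mathcal{I},P\boxtimes\lambda)$.
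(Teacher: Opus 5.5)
Your proposal is correct and follows essentially the same route as the paper, which simply invokes the standard a priori estimate for Lipschitz SDEs (Theorem 3.2.2, Chapter 3 of \cite{Zhang-17}) and uses Proposition \ref{extension of domain of graphon operator} to bound $\mathbb{E}^\boxtimes[|GX_t|^2]$ by $\mathbb{E}^\boxtimes[|X_t|^2]$; this is exactly your It\^o/BDG/Gronwall argument written out. You are also right that a Lipschitz condition on $\sigma$, omitted in the statement, is needed. Your remark that the pre-Gronwall bound involves only $\int_0^t\mathbb{E}^\boxtimes[|X_s|^2]\,\mathrm{d}s<\infty$ is precisely what upgrades $X\in\mathbb{H}^2_\boxtimes(\mathbb{R}^n)$ to $\mathbb{S}^2_\boxtimes(\mathbb{R}^n)$.
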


\begin{proof}
	The proof follows similarly to Theorem 3.2.2, Chapter 3 of \cite{Zhang-17}, with Proposition \ref{extension of domain of graphon operator} used to handle estimates involving the graphon aggregation term.
\end{proof}

\begin{lemma}\label{estimate of GMFLFBSDE alpha delta}
	Assume conditions (S1) and (S2) hold. Suppose that for any $b,\sigma,g\in L_\boxtimes^2(0,T;\mathbb{R}^n)$ and $h\in L_{\boxtimes_T}^2(\mathbb{R}^n)$, the linear graphon-aggregated FBSDE (\ref{GMFLFBSDE parameterized alpha}) with parameter $\alpha_0$ admits a unique solution $(X,Y,Z)\in\mathbb{S}_\boxtimes^2(\mathbb{R}^n)\times\mathbb{S}_\boxtimes^2(\mathbb{R}^n)\times\mathbb{H}_\boxtimes^2(\mathbb{R}^n)$, and the estimate
	\begin{equation}\label{estimate of GMFLFBSDE parameterized alpha0}
        \begin{aligned}
		&\mathbb{E}^\boxtimes\biggl[ \sup_{0\le t\le T}|X_t|^2+\sup_{0\le t\le T}|Y_t|^2+\int_0^T |Z_t|^2\mathrm{d}t \biggr]\\
        &\quad \le K\mathbb{E}^\boxtimes\biggl[ |x_0|^2+|h|^2+ \int_0^T\{|b_t|^2+|\sigma_t|^2+|g_t|^2\}\mathrm{d}t \biggr]
	    \end{aligned}
    \end{equation}
	is satisfied. Then there exists a constant $\delta_0>0$, independent of $\alpha_0$, such that for any $\alpha\in[\alpha_0,\alpha_0+\delta_0]$, and any $b,\sigma,g\in L_\boxtimes^2(0,T;\mathbb{R}^n)$, $h\in L_{\boxtimes_T}^2(\mathbb{R}^n)$, (\ref{GMFLFBSDE parameterized alpha}) with parameter $\alpha$ admits a unique solution $(X,Y,Z)\in\mathbb{S}_\boxtimes^2(\mathbb{R}^n)\times\mathbb{S}_\boxtimes^2(\mathbb{R}^n)\times\mathbb{H}_\boxtimes^2(\mathbb{R}^n)$, and the estimate
	\begin{equation}\label{estimate of GMFLFBSDE parameterized alpha}
        \begin{aligned}
		&\mathbb{E}^\boxtimes\biggl[ \sup_{0\le t\le T}|X_t|^2+\sup_{0\le t\le T}|Y_t|^2+\int_0^T |Z_t|^2\mathrm{d}t \biggr]\\
        &\quad \le K\mathbb{E}^\boxtimes\biggl[ |x_0|^2+|h|^2+ \int_0^T\{|b_t|^2+|\sigma_t|^2+|g_t|^2\}\mathrm{d}t \biggr]
	    \end{aligned}    
    \end{equation}
	holds. Here, the constant $K$ in (\ref{estimate of GMFLFBSDE parameterized alpha0}) and (\ref{estimate of GMFLFBSDE parameterized alpha}) denotes a generic constant, not necessarily the same in each occurrence.
\end{lemma}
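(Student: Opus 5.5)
This is the standard continuation step of Hu--Peng/Peng--Wu, transplanted to the rich Fubini extension; the graphon term is absorbed by Proposition \ref{extension of domain of graphon operator}. Fix $\alpha=\alpha_0+\delta$ with $\delta\in[0,\delta_0]$. Given an input triple $(x,y,z)\in\mathbb{S}_\boxtimes^2\times\mathbb{S}_\boxtimes^2\times\mathbb{H}_\boxtimes^2$, solve (\ref{GMFLFBSDE parameterized alpha}) with parameter $\alpha_0$ and modified data:
\begin{gather*}
b^u+\delta\big(K_1y^u+A^{21,u}x^u+A^{22,u}y^u+A^{23,u}z^u+B^{2,u}Gx^u\big),\\
\sigma^u+\delta\big(K_1z^u+A^{31,u}x^u+A^{32,u}y^u+A^{33,u}z^u+B^{3,u}Gx^u\big),\\
g^u+\delta\big(K_1x^u+A^{11,u}x^u+A^{12,u}y^u+A^{13,u}z^u+B^{1,u}Gx^u\big),\\
h^u+\delta\big(-x_T^u+G^{1,u}x_T^u+G^{2,u}Gx_T^u\big).
\end{gather*}
By hypothesis this has a unique solution $(X,Y,Z)=:\Gamma(x,y,z)$. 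A fixed point of $\Gamma$ is exactly a solution of (\ref{GMFLFBSDE parameterized alpha}) with parameter $\alpha$. Conversely, uniqueness of solutions for parameter $\alpha$ follows from the argument of Lemma \ref{uniqueness of GMFLFBSDE}, since the parameterized coefficients still satisfy (S2) with the same $K_1$: they are the convex combination of $(1-\alpha)$ times the model coefficients and $\alpha$ times the original ones.

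The core step is to show that $\Gamma$ is a contraction for small $\delta$, with $\delta$ independent of $\alpha_0$. I will take two inputs, write $\hat x=x-x'$ and so on, and let $(\hat X,\hat Y,\hat Z)$ be the difference of the outputs. Applying It\^o's formula to $\hat X^{u\top}\hat Y^u$ and integrating with respect to $P\boxtimes\lambda$ uses (\ref{monotonicity condition-1}) and (\ref{monotonicity condition-2}) for the combined $\alpha_0$ coefficients, which carry monotonicity constant $K_1$ uniformly in $\alpha_0$. After Cauchy--Schwarz on the $\delta$-terms, this gives
\[
K_1\mathbb{E}^\boxtimes\!\int_0^T\{|\hat X|^2+|\hat Y|^2+|\hat Z|^2\}\mathrm{d}t\le \delta C\Big(\mathbb{E}^\boxtimes\big[|\hat x_T|^2+|\hat X_T|^2\big]+\mathbb{E}^\boxtimes\!\int_0^T\{|\hat x|^2+|\hat y|^2+|\hat z|^2+|\hat X|^2+|\hat Y|^2+|\hat Z|^2\}\mathrm{d}t\Big).
\]
Here $C$ depends only on $K_1$, the $L^\infty$ bounds of $A^{ij},B^i,G^1,G^2$, and $\|G\|_\infty\le1$; the graphon terms are controlled by $\mathbb{E}^\boxtimes|G\hat x|^2\le\mathbb{E}^\boxtimes|\hat x|^2$. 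The factor $(1-\alpha_0)+\alpha_0G^1$ and the term $\alpha_0G^2G$ in the terminal condition only contribute nonnegative quantities, so they can be dropped.

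Next I control $\mathbb{E}^\boxtimes[\sup|\hat X|^2+\sup|\hat Y|^2]$ and $\mathbb{E}^\boxtimes|\hat X_T|^2$ by $C\,\mathbb{E}^\boxtimes\int_0^T(|\hat Y|^2+|\hat Z|^2)\,\mathrm{d}t$ plus $C\delta$ times the input norms. This comes from the forward SDE (Lemma \ref{existence and uniqueness of SDE graphon aggregation}, with $GX$ handled by Proposition \ref{extension of domain of graphon operator}) together with the standard BSDE estimate for $(\hat Y,\hat Z)$. Combining the two bounds yields
\[
\|\Gamma(x,y,z)-\Gamma(x',y',z')\|^2\le C\delta\,\|(\hat x,\hat y,\hat z)\|^2
\]
in the norm $\mathbb{E}^\boxtimes[\sup|\cdot|^2+\sup|\cdot|^2+\int|\cdot|^2]$. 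Choosing $\delta_0=1/(2C)$ makes $\Gamma$ a contraction, so it has a unique fixed point.

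For the a priori estimate (\ref{estimate of GMFLFBSDE parameterized alpha}), I will apply the same It\^o computation directly to the solution for parameter $\alpha$ against the zero solution. Alternatively, I can use (\ref{estimate of GMFLFBSDE parameterized alpha0}) at the fixed point and absorb the $\delta$-terms using $C\delta_0\le 1/2$. The main obstacle is bookkeeping: I must make sure that every constant, and in particular $\delta_0$, is uniform in $\alpha_0\in[0,1]$. The delicate point is the terminal term. There the $\delta$-perturbation $-\hat x_T+G^1\hat x_T+G^2G\hat x_T$ is not sign-definite and has to be bounded by $\delta C(|\hat x_T|^2+|\hat X_T|^2)$, which requires $\hat X_T$ to be controlled by the sup-norm estimate of the forward equation.
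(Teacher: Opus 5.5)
Your proposal is correct and follows essentially the same route as the paper: freeze the $\delta$-terms at an input $(x,y,z)$ and solve the $\alpha_0$-equation. You then get a contraction from It\^o's formula on $\hat X^{\top}\hat Y$ and the monotonicity condition, with $\mathbb{E}^\boxtimes|\hat X_T|^2$ controlled by the graphon-aggregated SDE estimate and the BSDE estimate. Two of your choices are the right ones for keeping $\delta_0$ independent of $\alpha_0$: working in $\mathbb{S}^2_\boxtimes\times\mathbb{S}^2_\boxtimes\times\mathbb{H}^2_\boxtimes$ so that $x_T$ makes sense, and proving the a priori estimate at $\alpha$ directly by the It\^o/monotonicity computation. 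The alternative of absorbing through the $\alpha_0$-estimate, which is what the paper does, would make the smallness of $\delta$ depend on the $\alpha_0$-constant.
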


\begin{proof}
	Assume that for parameter $\alpha_0$, a unique solution to the linear graphon-aggregated FBSDE (\ref{GMFLFBSDE parameterized alpha}) exists. Consider the equation:
	\begin{equation}\label{GMFLFBSDE parameterized alpha delta}
		\begin{cases}
			\mathrm{d}X_t^u=\big\{-(1-\alpha)K_1Y_t^u+\alpha\big(A_t^{21,u}X_t^u+A_t^{22,u}Y_t^u+A_t^{23,u}Z_t^u+B_t^{2,u}GX_t^u\big)\\
			\qquad\qquad +\delta\big(K_1y_t^u+A_t^{21,u}x_t^u+A_t^{22,u}y_t^u+A_t^{23,u}z_t^u+B_t^{2,u}Gx_t^u\big)+b_t^u\big\}\mathrm{d}t,\\
			\qquad\quad +\big\{-(1-\alpha)K_1Z_t^u+\alpha\big(A_t^{31,u}X_t^u+A_t^{32,u}Y_t^u+A_t^{33,u}Z_t^u+B_t^{3,u}GX_t^u\big)\\
			\qquad\qquad +\delta\big(K_1z_t^u+A_t^{31,u}x_t^u+A_t^{32,u}y_t^u+A_t^{33,u}z_t^u+B_t^{3,u}Gx_t^u\big)+\sigma_t^u\big\}\mathrm{d}W_t^u,\\
			\mathrm{d}Y_t^u=\big\{-(1-\alpha)K_1X_t^u+\alpha\big(A_t^{11,u}X_t^u+A_t^{12,u}Y_t^u+A_t^{13,u}Z_t^u+B_t^{1,u}GX_t^u\big)\\
			\qquad\qquad +\delta\big(K_1x_t^u+A_t^{11,u}x_t^u+A_t^{12,u}y_t^u+A_t^{13,u}z_t^u+B_t^{1,u}Gx_t^u\big)+g_t^u\big\}\mathrm{d}t+Z_t^u\mathrm{d}W_t^u,\\
			X_0^u=x_0^u,\\
            Y_T^u=(1-\alpha)X_T^u+\alpha G^{1,u}X_T^u+\alpha G^{2,u}GX_T^u-\delta\big(x_T^u-G^{1,u}x_T^u-G^{2,u}Gx_T^u\big)+h^u.
		\end{cases}
	\end{equation}
	For given $(x,y,z)\in L_\boxtimes^2(\mathbb{R}^{3n})$, there exists a unique solution $(X,Y,Z)\in L_\boxtimes^2(\mathbb{R}^{3n})$ satisfying
	\begin{multline*}
		\mathbb{E}^\boxtimes\biggl[ \sup_{0\le t\le T}|X_t|^2+\sup_{0\le t\le T}|Y_t|^2+\int_0^T |Z_t|^2\mathrm{d}t \biggr]\\
		\le K\delta^2\mathbb{E}^\boxtimes\biggl[ \int_0^T \{|x_t|^2+|y_t|^2+|z_t|^2\}\mathrm{d}t \biggr]+K\mathbb{E}^\boxtimes\biggl[ |x_0|^2+|h|^2+\int_0^T \{|b_t|^2+|\sigma_t|^2+|g_t|^2\}\mathrm{d}t \biggr].
	\end{multline*}
	Define the mapping $\varPhi:\varPhi(x,y,z):=(X,Y,Z)$. If $\varPhi$ has a fixed point, then the equation (\ref{GMFLFBSDE parameterized alpha delta}) admits a unique solution, and for sufficiently small $\delta$, the estimate holds. We now show that for sufficiently small $\delta$, $\varPhi$ is a contraction.
	
	Apply Itô's formula to $\varDelta X_t^{u\top}\varDelta Y_t^u$, we get
	\begin{multline*}
		\mathrm{d}(\varDelta X_t^{u\top}\varDelta Y_t^u)=\Big\{-(1-\alpha)K\big(|\varDelta X_t^u|^2+|\varDelta Y_t^u|^2+|\varDelta Z_t^u|^2\big)\\
		+\alpha\begin{pmatrix}
			\varDelta X_t^u \\ \varDelta Y_t^u \\ \varDelta Z_t^u
		\end{pmatrix}^\top\begin{pmatrix}
			A_t^{11,u} & A_t^{12,u} & A_t^{13,u} \\
			A_t^{21,u} & A_t^{22,u} & A_t^{23,u} \\
			A_t^{31,u} & A_t^{32,u} & A_t^{33,u}
		\end{pmatrix}\begin{pmatrix}
			\varDelta X_t^u \\ \varDelta Y_t^u \\ \varDelta Z_t^u
		\end{pmatrix}+\alpha\begin{pmatrix}
			\varDelta X_t^u \\ \varDelta Y_t^u \\ \varDelta Z_t^u
		\end{pmatrix}^\top\begin{pmatrix}
			B_t^{1,u} \\ B_t^{2,u} \\ B_t^{3,u}
		\end{pmatrix}G\varDelta X_t^u\\
		+\delta\begin{pmatrix}
			\varDelta X_t^u \\ \varDelta Y_t^u \\ \varDelta Z_t^u
		\end{pmatrix}^\top\begin{pmatrix}
			B_t^{1,u} \\ B_t^{2,u} \\ B_t^{3,u}
		\end{pmatrix}G\varDelta x_t^u+\delta\begin{pmatrix}
			\varDelta X_t^u \\ \varDelta Y_t^u \\ \varDelta Z_t^u
		\end{pmatrix}^\top\begin{pmatrix}
			A_t^{11,u} & A_t^{12,u} & A_t^{13,u} \\
			A_t^{21,u} & A_t^{22,u} & A_t^{23,u} \\
			A_t^{31,u} & A_t^{32,u} & A_t^{33,u}
		\end{pmatrix}\begin{pmatrix}
			\varDelta x_t^u \\ \varDelta y_t^u \\ \varDelta z_t^u
		\end{pmatrix}\\
		+\delta K_1\big(\varDelta X_t^u\varDelta x_t^u+\varDelta Y_t^u\varDelta y_t^u+\varDelta Z_t^u\varDelta z_t^u\big)\Big\}\mathrm{d}t+(\cdots)\mathrm{d}W_t^u.
	\end{multline*}
	Integrating and using the monotonicity condition (\ref{monotonicity condition-1}), the boundedness of coefficients and the graphon, we obtain
	\begin{multline*}
		\mathbb{E}^\boxtimes[\varDelta X_T^\top\varDelta Y_T]\le \mathbb{E}^\boxtimes\biggl[ \int_0^T -K_1\big\{|\varDelta X_t|^2+|\varDelta Y_t|^2+|\varDelta Z_t|^2\big\}\mathrm{d}t \biggr]\\
		+\delta K_3 \mathbb{E}^\boxtimes\biggl[ \int_0^T \big\{|\varDelta X_t|^2+|\varDelta Y_t|^2+|\varDelta Z_t|^2+|\varDelta x_t|^2+|\varDelta y_t|^2+|\varDelta z_t|^2\big\}\mathrm{d}t \biggr].
	\end{multline*}
	By estimates for graphon-aggregated SDEs (Lemma \ref{existence and uniqueness of SDE graphon aggregation}) and general BSDEs,
	\begin{equation*}
        \begin{aligned}
		&\mathbb{E}^\boxtimes\biggl[ \sup_{0\le t\le T}|\varDelta X_t|^2 \biggr]\\
		&\le K\delta^2\mathbb{E}^\boxtimes\biggl[ \int_0^T\big\{|\varDelta x_t|^2+|\varDelta y_t|^2+|\varDelta z_t|^2\big\}\mathrm{d}t \biggr]+K\mathbb{E}^\boxtimes\biggl[ \int_0^T\big\{|\varDelta Y_t|^2+|\varDelta Z_t|^2\big\}\mathrm{d}t \biggr],\\
		&\mathbb{E}^\boxtimes\biggl[ \sup_{0\le t\le T}|\varDelta Y_t|^2+\int_0^T |\varDelta Z_t|^2\mathrm{d}t \biggr]\\
		&\le K\delta^2\mathbb{E}^\boxtimes\biggl[|\varDelta x_T|^2+ \int_0^T \big\{|\varDelta x_t|^2+|\varDelta y_t|^2+|\varDelta z_t|^2\big\}\mathrm{d}t \biggr]+K\mathbb{E}^\boxtimes\biggl[ |\varDelta X_T|^2+ \int_0^T |\varDelta X_t|^2\mathrm{d}t\biggr].
	    \end{aligned}
    \end{equation*}
	Combining these inequalities yields
	\begin{multline*}
		(1-\alpha)\mathbb{E}^\boxtimes[|\varDelta X_T|^2]+K_1\mathbb{E}^\boxtimes\biggl[ \int_0^T\big\{|\varDelta X_t|^2+|\varDelta Y_t|^2+|\varDelta Z_t|^2\big\}\mathrm{d}t \biggr]\\
		\le \delta K_3 \mathbb{E}^\boxtimes\biggl[ \int_0^T \big\{|\varDelta X_t|^2+|\varDelta Y_t|^2+|\varDelta Z_t|^2+|\varDelta x_t|^2+|\varDelta y_t|^2+|\varDelta z_t|^2\big\}\mathrm{d}t+|x_T|^2+|X_T|^2 \biggr].
	\end{multline*}
	Thus,
	\begin{multline*}
		K_1\mathbb{E}^\boxtimes\biggl[ \int_0^T\big\{|\varDelta X_t|^2+|\varDelta Y_t|^2+|\varDelta Z_t|^2\big\}\mathrm{d}t+|\varDelta X_T|^2 \biggr]\\
		\le \delta K_3 \mathbb{E}^\boxtimes\biggl[ \int_0^T \big\{|\varDelta X_t|^2+|\varDelta Y_t|^2+|\varDelta Z_t|^2+|\varDelta x_t|^2+|\varDelta y_t|^2+|\varDelta z_t|^2\big\}\mathrm{d}t+|\varDelta x_T|^2\biggr].
	\end{multline*}
	For sufficiently small $\delta$, we have
	\begin{multline*}
		\mathbb{E}^\boxtimes\biggl[ \int_0^T\big\{|\varDelta X_t|^2+|\varDelta Y_t|^2+|\varDelta Z_t|^2\big\}\mathrm{d}t+|\varDelta X_T|^2 \biggr]\\
		\le \frac{1}{2}\mathbb{E}^\boxtimes\biggl[ \int_0^T \big\{|\varDelta x_t|^2+|\varDelta y_t|^2+|\varDelta z_t|^2\big\}\mathrm{d}t+|\varDelta x_T|^2 \biggr],
	\end{multline*}
	proving that $\varPhi$ is a contraction for sufficiently small $\delta$.
\end{proof} 

\begin{proof}[Proof of Theorem \ref{existence and uniqueness of GMFLFBSDE}]
	Uniqueness: It follows directly from Lemma \ref{uniqueness of GMFLFBSDE}.
	
	Existence and Estimates: Consider the family of parameterized equations (\ref{GMFLFBSDE parameterized alpha}). Lemma \ref{existence and uniqueness of GMFLFBSDE parameterized alpha=0} establishes that equation (\ref{GMFLFBSDE parameterized alpha}) with $\alpha=0$ admits a unique solution for any given non-homogeneous terms $(b,\sigma,g)$. According to Lemma \ref{estimate of GMFLFBSDE alpha delta}, there exists a constant $\delta_0 > 0$ such that if, for some $\alpha_0$, the parameterized equation admits a unique solution for any $(b,\sigma,g)$, together with the solution estimate (\ref{estimate of GMFLFBSDE parameterized alpha0}), then for any $\alpha \in [\alpha_0, \alpha_0+\delta_0]$, the parameterized equation also admits a unique solution for any $(b,\sigma,g)$, along with the corresponding solution estimate (\ref{estimate of GMFLFBSDE parameterized alpha}). Since $\delta_0 > 0$ and is independent of $\alpha_0$, one can apply Lemma \ref{estimate of GMFLFBSDE alpha delta} iteratively a finite number of times. Consequently, it follows that equation (\ref{existence and uniqueness of GMFLFBSDE parameterized alpha=0}) with $\alpha=1$ admits a unique solution for any $(b,\sigma,g)$, and the solution satisfies the estimate (\ref{estimate of GMFLFBSDE}).
\end{proof}

	\subsection{Stability estimates for solutions to graphon-aggregated FBSDEs} 

This section investigates the stability of solutions to the linear graphon-aggregated FBSDE (\ref{GMFLFBSDE}) under perturbations of the graphon. Specifically, we establish an estimate quantifying the continuous dependence of the solution on the graphon.

\begin{thm}\label{stability of GFBSDE graphon}
	Assume that \(G_1, G_2 \in \mathcal{W}_0\), and that the linear graphon-aggregated FBSDE (\ref{GMFLFBSDE}) satisfies assumptions (S1) and (S2) under both graphons \(G_1\) and \(G_2\). Let \((X^1, Y^1, Z^1)\) and \((X^2, Y^2, Z^2)\) be the corresponding solutions. Then, the following stability estimate holds:
	\begin{multline}\label{estimate of continuous dependence GFBSDE}
		\mathbb{E}^\boxtimes\biggl[ \sup_{0\le t\le T}|X_t^1-X_t^2|^2+\sup_{0\le t\le T}|Y_t^1-Y_t^2|^2+\int_0^T |Z_t^1-Z_t^2|^2\mathrm{d}t \biggr]\\
		\le K\|G_1-G_2\|_\infty^2 \cdot \mathbb{E}^\boxtimes\biggl[ |x_0|^2+|h|^2+ \int_0^T\big\{|b_t|^2+|\sigma_t|^2+|g_t|^2\big\}\mathrm{d}t \biggr].
	\end{multline}
\end{thm}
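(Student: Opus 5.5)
The approach is to view the difference of the two solutions as itself solving a linear graphon-aggregated FBSDE of the form (\ref{GMFLFBSDE}) driven by the graphon $G_1$, with the discrepancy between $G_1$ and $G_2$ moved into the inhomogeneous terms. Then the a priori estimate (\ref{estimate of GMFLFBSDE}) of Theorem \ref{existence and uniqueness of GMFLFBSDE} applies directly.

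Set $\varDelta X:=X^1-X^2$, $\varDelta Y:=Y^1-Y^2$ and $\varDelta Z:=Z^1-Z^2$. Since $G_1X^1-G_2X^2=G_1\varDelta X+(G_1-G_2)X^2$, the triple $(\varDelta X,\varDelta Y,\varDelta Z)$ satisfies (\ref{GMFLFBSDE}) with graphon $G_1$ and the following data:
\begin{itemize}
\item initial condition $0$;
\item drift perturbation $\bar b^u:=B^{2,u}(G_1-G_2)X^{2,u}$;
\item diffusion perturbation $\bar\sigma^u:=B^{3,u}(G_1-G_2)X^{2,u}$;
\item backward driver perturbation $\bar g^u:=B^{1,u}(G_1-G_2)X^{2,u}$;
\item terminal perturbation $\bar h^u:=G^{2,u}(G_1-G_2)X_T^{2,u}$.
\end{itemize}
Assumptions (S1) and (S2) hold for $G_1$ by hypothesis, so it remains only to check that these new data are admissible: $\bar b,\bar\sigma,\bar g\in L^2_\boxtimes(0,T;\mathbb{R}^n)$ and $\bar h\in L^2_{\boxtimes_T}(\mathbb{R}^n)$.

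The adaptedness of $(G_1-G_2)X^2$ is the delicate point. By the exact law of large numbers argument used in Proposition \ref{fixed point} and Theorem \ref{existence and uniqueness of solutions of state equation}, $G_iX^2$ is deterministic, so $(G_1-G_2)X^2$ belongs to $\bar L^2_\boxtimes$ and is therefore trivially adapted. This uses the essential pairwise independence of $X^2$. Alternatively, I would simply note that the operator bound of Proposition \ref{extension of domain of graphon operator} extends to the signed kernel $G_1-G_2$.

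Applying (\ref{estimate of GMFLFBSDE}) then gives
\[
\mathbb{E}^\boxtimes\Big[\sup_t|\varDelta X_t|^2+\sup_t|\varDelta Y_t|^2+\int_0^T|\varDelta Z_t|^2\mathrm{d}t\Big]\le K\,\mathbb{E}^\boxtimes\Big[|\bar h|^2+\int_0^T\{|\bar b_t|^2+|\bar\sigma_t|^2+|\bar g_t|^2\}\mathrm{d}t\Big].
\]
The coefficients $B^{i}$ and $G^{2}$ are bounded. Proposition \ref{properties of graphon operators} applies to the kernel $G_1-G_2$, because its proof only uses $|G_1(u,v)-G_2(u,v)|\le\|G_1-G_2\|_\infty$ pointwise together with Cauchy--Schwarz on $I$. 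This yields $\mathbb{E}^\boxtimes|(G_1-G_2)X^2_t|^2\le\|G_1-G_2\|_\infty^2\,\mathbb{E}^\boxtimes|X^2_t|^2$, so the right-hand side is bounded by $K\|G_1-G_2\|_\infty^2\,\mathbb{E}^\boxtimes[\sup_t|X^2_t|^2]$.

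Finally, applying Theorem \ref{existence and uniqueness of GMFLFBSDE} once more, now to the solution with graphon $G_2$, bounds $\mathbb{E}^\boxtimes[\sup_t|X^2_t|^2]$ by $K\,\mathbb{E}^\boxtimes[|x_0|^2+|h|^2+\int_0^T\{|b_t|^2+|\sigma_t|^2+|g_t|^2\}\mathrm{d}t]$, which gives (\ref{estimate of continuous dependence GFBSDE}).

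The main obstacle I expect is the uniformity of the constant $K$. The constant in (\ref{estimate of GMFLFBSDE}) comes from the continuation argument, so I would need to verify that it depends only on $T$, $K_1$ and the bounds on the coefficients, not on the particular graphon. Tracing Lemmas \ref{existence and uniqueness of GMFLFBSDE parameterized alpha=0}--\ref{estimate of GMFLFBSDE alpha delta}, the graphon enters only through $\|G\|_\infty\le1$ via Proposition \ref{extension of domain of graphon operator}, so the constants should be uniform over $\mathcal{W}_0$ once $K_1$ is fixed. If necessary, one can take $K_1$ to be the minimum of the monotonicity constants for $G_1$ and $G_2$.
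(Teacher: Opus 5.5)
Your proposal is correct and follows essentially the same route as the paper's proof. You write the difference as a solution of (\ref{GMFLFBSDE}) with graphon $G_1$ and forcing terms $B^{i}(G_1-G_2)X^2$, then conclude with Theorem~\ref{existence and uniqueness of GMFLFBSDE}, the bound $\mathbb{E}^\boxtimes|(G_1-G_2)X_t^2|^2\le\|G_1-G_2\|_\infty^2\,\mathbb{E}^\boxtimes|X_t^2|^2$, and the energy estimate for $X^2$ under $G_2$.

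Your version is slightly more careful than the paper's. You use the zero initial datum and keep the terminal forcing $G^{2}(G_1-G_2)X_T^2$, whereas the paper's displayed difference equation writes the initial datum as $x_0$ and drops that terminal term. You also point out that $K$ must be uniform in the graphon for the estimate to have real content.
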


\begin{proof}
	Suppose that \((X^1, Y^1, Z^1)\) and \((X^2, Y^2, Z^2)\) are the solutions to (\ref{GMFLFBSDE}) corresponding to the graphons \(G_1\) and \(G_2\), respectively. Then, the difference \((\Delta X, \Delta Y, \Delta Z) = (X^1 - X^2, Y^1 - Y^2, Z^1 - Z^2)\) satisfies the following equation:
	\begin{equation*}
		\begin{cases}
			\mathrm{d}X_t^u=\big(A_t^{21,u}X_t^u+A_t^{22,u}Y_t^u+A_t^{23,u}Z_t^u+B_t^{2,u}G_1X_t^u+B_t^{2,u}(G_1-G_2)X_t^{2,u}\big)\mathrm{d}t,\\
			\qquad\qquad +\big(A_t^{31,u}X_t^u+A_t^{32,u}Y_t^u+A_t^{33,u}Z_t^u+B_t^{3,u}G_1X_t^u+B_t^{3,u}(G_1-G_2)X_t^{2,u}\big)\mathrm{d}W_t^u,\\
			\mathrm{d}Y_t^u=\big(A_t^{11,u}X_t^u+A_t^{12,u}Y_t^u+A_t^{13,u}Z_t^u+B_t^{1,u}G_1X_t^u+B_t^{1,u}(G_1-G_2)X_t^{2,u}\big)\mathrm{d}t\\
            \qquad\qquad +Z_t^u\mathrm{d}W_t^u,\\
			X_0^u=x_0^u,\quad Y_T^u=G^{1,u}X_T^u+G^{2,u}G_1X_T^u,
		\end{cases}
	\end{equation*}
	By applying Theorem \ref{existence and uniqueness of GMFLFBSDE} and Proposition \ref{extension of domain of graphon operator}, which provide a priori estimates for solutions to such graphon-aggregated FBSDEs, we obtain
	\begin{equation*}
        \begin{aligned}
		&\mathbb{E}^\boxtimes\biggl[ \sup_{0\le t\le T}|X_t^1-X_t^2|^2+\sup_{0\le t\le T}|Y_t^1-Y_t^2|^2+\int_0^T |Z_t^1-Z_t^2|^2\mathrm{d}t \biggr]\\
		&\le K\mathbb{E}^\boxtimes\biggl[ \int_0^T |(G_1-G_2)X_t^{2}|^2\mathrm{d}t \biggr]\le K\|G_1-G_2\|_\infty^2 \cdot \mathbb{E}^\boxtimes\biggl[ \int_0^T |X_t^2|^2\mathrm{d}t \biggr]\\
		&\le K\|G_1-G_2\|_\infty^2 \cdot \mathbb{E}^\boxtimes\biggl[ |x_0|^2+|h|^2+ \int_0^T\big\{|b_t|^2+|\sigma_t|^2+|g_t|^2\big\}\mathrm{d}t \biggr].
	    \end{aligned}
    \end{equation*}
	The last inequality follows from the standard energy estimate for the solution \(X^2\) of the linear graphon-aggregated FBSDE under graphon \(G_2\), which is guaranteed by assumptions (S1) and (S2).
\end{proof}

\section{Concluding remarks}

In this paper, we have investigated an LQ Stackelberg stochastic graphon game involving one leader and a continuum of followers. The state equations of the followers interact through graphon coupling terms, with their diffusion coefficients depending on the individual state, the graphon aggregation term, and the control variables. The diffusion term of the leader's state equation depends on its own state and control variables. A rigorous mathematical framework is established, demonstrating that under the admissible control set, the states of the followers are essentially pairwise independent. We have established the Stackelberg-Nash equilibrium for the problem. In the followers' problem, given the leader's fixed strategy, we apply the maximum principle to characterize the Nash equilibrium. This leads to a graphon-aggregated FBSDE with a term that the followers' equilibrium states and adjoint states must satisfy. A sufficient condition for the existence and uniqueness of solutions to this class of FBSDEs is provided. In the leader's problem, under Assumption (A4) on the followers' interaction graphon, we aggregate the followers' states. By employing a duality method, we then derive a sufficient condition for the leader's Stackelberg equilibrium. For the followers' problem, we find that their Nash equilibrium is characterized by a class of graphon-aggregated FBSDEs. Using the method of continuity, we prove that under a certain monotonicity condition, the linear graphon-aggregated FBSDE admits a unique solution. Furthermore, we establish the continuity of the system's solution with respect to the interaction graphon.

This work has several limitations. First, the coefficients in the state equations and cost functionals of the followers (except for the graphon aggregation term) are assumed to be independent of the follower’s index $u$. Moreover, the control weight matrix for the followers is strictly positive definite. Additionally, the leader’s influence is confined to the cost functionals, as the leader’s state and control variables are not incorporated into the followers’ state equations. Future research will focus on generalizing the problem structure. For instance, by allowing the leader’s state or control variables to enter the followers’ dynamics and by introducing follower-dependent coefficients in the state equations. Further directions include establishing monotonicity conditions for generalized linear graphon-aggregated FBSDEs, as well as investigating the relationship between Stackelberg games with a finite number of followers and those involving a continuum of followers.

\end{document}